\newtheorem{theorem}{Theorem}[section]
\newtheorem{lemma}[theorem]{Lemma}
\numberwithin{equation}{section}
\newcommand{\beq}{\begin{equation}}
\newcommand{\eeq}{\end{equation}}
\newcommand{\bea}{\begin{eqnarray}}
\newcommand{\eea}{\end{eqnarray}}
\newcommand{\ba}{\begin{array}}
\newcommand{\ea}{\end{array}}
\newcommand{\bi}{\begin{itemize}}
\newcommand{\ei}{\end{itemize}}
\newcommand{\ben}{\begin{enumerate}}
\newcommand{\een}{\end{enumerate}}
\newcommand{\nn}{\nonumber}
\renewcommand{\r}{\right}
\renewcommand{\l}{\left}
\long\def\symbolfootnote[#1]#2{\begingroup\def\thefootnote{\fnsymbol{footnote}}\footnote[#1]{#2}\endgroup}
\newcommand{\nt}{_{nt}}
\newcommand{\Rnr}{R_n(\rho)}
\newcommand{\tr} {\text{tr}}
\newcommand{\tXn}{\tilde{X}_{nt}} 
\newcommand{\tYn}{\tilde{Y}_{nt}}
\newcommand{\rVn}{\tilde{V}_{nt}^{'}(\xi)}
\newcommand{\tVn}{\tilde{V}_{nt}(\xi)}
\newcommand{\Rn}{R_n(\rho)} 
\newcommand{\Hn}{H_n(\rho)}
\newcommand{\qHn}{H^2_n(\rho)} 
\newcommand{\SnY}{\Rn[S_n(\lambda)\tilde{Y}_{nt}-\tilde{X}_{nt}\beta]} 
\newcommand{\Gn}{G_n(\lambda)}
\newcommand{\qGn}{G^2_n(\lambda)}
\providecommand{\customgenericname}{}
\newcommand{\newcustomtheorem}[2]{%
  \newenvironment{#1}[1]
  {%
   \renewcommand\customgenericname{#2}%
   \renewcommand\theinnercustomgeneric{##1}%
   \innercustomgeneric
  }
  {\endinnercustomgeneric}
}
\newcommand{\blind}{1}
\begin{document}

\def\spacingset#1{\renewcommand{\baselinestretch}%
{#1}\small\normalsize} \spacingset{1}


\if1\blind
{
  \title{\begin{center}{\bf \Large Supplementary Material for ``Saddlepoint approximations for spatial panel data models''}\end{center}}
  \author{Chaonan Jiang,
    Davide La Vecchia, 
    Elvezio Ronchetti, 
    Olivier Scaillet 
}
  \maketitle
} \fi

\if0\blind
{
  \bigskip
  \bigskip
  \bigskip
\begin{center}{\bf \Large Supplementary Material for ``Saddlepoint approximations for spatial panel data models''}\end{center}
  \medskip
} \fi

\spacingset{1.5} 
\addtolength{\textheight}{.5in}%

\appendix

\noindent Appendix \ref{Section: Appendix1} includes proofs of Lemma 1-2 in A.1-2, 
Proposition 3-4 in A.3-4. \\
Appendix \ref{Section: Appendix2} provides the first, second and third derivatives of the log-likelihood in B.1-3 as well as some additional computations in B.4.\\
Appendix \ref{Section: AppendixAlgo} contains the algorithms which explain how to implement our saddlepoint techniques. \\
Appendix \ref{Section: Appendix3} gives additional numerical results for the SAR(1) model in \ref{Subsec: Asy1} 
and D.3-6, 
and the analytical check of Assumption D(iv) in \ref{Sec: D_check}. 

\section{Proofs of Lemmas and Propositions} \label{Section: Appendix1}

\subsection{Proof of Lemma 1
}  \label{Subsec: Lemma1}

\begin{proof}

Since the MLE is an M-estimator, we derive its second-order von Mises expansion using the results of \cite{vonMises} (see also \citet{F62}, and \citet{CF99}), and we get
\begin{equation}
\vartheta(P_{n,T})-\vartheta(P_{\theta _{0}})=\frac{1}{n}   \sum_{i=1}^n  IF_{i,T}( \psi, P_{\theta _{0}}) +\frac{1}{2n^2}  \sum_{i=1}^n \sum_{j=1}^n \varphi_{i,j,T}(\psi, P_{\theta _{0}}) + O_P(m^{-3/2}),  \label{IIorder}
\end{equation}
where we make use of the fact that $O_P(n^{-3/2})$ is also $O_P(m^{-3/2})$, since $m=n(T-1)$.
The expression of $IF_{i,T}(\psi,P_{\theta _{0}})$ and of $\varphi_{i,j,T}(\psi,P_{\theta _{0}})$ for general $M$-estimators are available in \citet{GR96}.  
Specifically, for the $i$-th observation and for the whole time span, $IF_{i,T}(\psi, P_{\theta _{0}})$
is the Influence Function (IF) of the MLE, having likelihood score $\displaystyle (T-1)^{-1}\sum_{t=1}^{T}\psi_{i,t}(\theta_{0})$, which reads as:
$\displaystyle 
IF_{i,T}(\psi,P_{\theta _{0}})=M_{i,T}^{-1}(\psi,P_{\theta _{0}}) (T-1)^{-1} \sum_{t=1}^T \psi_{i,t}(\theta_{0}),
$
From \citet{W83}, it follows that the second term of the von Mises expansion  in (\ref{IIorder}) is given by (4.7) and (4.8). 

To compute the second-order von Mises expansion, we need the matrices of partial derivatives 
$\frac{\partial^2 \psi_{i,t,l}(\theta)}{\partial\theta \partial\theta'}
\Big\vert_{\theta=\theta_0}$ for $l=1,2,\cdots,d$, whose expressions are provided in Appendix \ref{Section: Appendix2}. 
\end{proof}

\subsection{Proof of Lemma 2
} \label{Subsec: Lemma2}

\begin{proof}

For $m=n(T-1)$, the second-order von Mises expansion for $q[\vartheta(P_{n,T})]$ is:
\begin{eqnarray}
q[\vartheta(P_{n,T})] - q[\vartheta(P_{\theta_0})] &=& \left(\vartheta(P_{n,T})- \vartheta(P_{\theta_0})\right)' \frac{\partial q(\vartheta)}{\partial\vartheta}\Big
\vert_{\theta=\theta_0} \nonumber \\ & & + \frac{1}{2} \left(\vartheta(P_{n,T})- \vartheta(P_{\theta_0})\right)' \frac{\partial^2 q(\vartheta)}{\partial\vartheta
\partial\vartheta'}\Big \vert_{\theta=\theta_0}  \left(\vartheta(P_{n,T})- \vartheta(P_{\theta_0})\right) \nonumber \\
& & O_P(\vert\vert \vartheta(P_{n,T}) - \vartheta(P_{\theta_0})\vert\vert^3).
 \label{Eq: vMq}
\end{eqnarray}

Making use of (\ref{IIorder}), (4.6) and (4.7) 
into (\ref{Eq: vMq}), we get
\begin{eqnarray}
q[\vartheta(P_{n,T})] - q[\vartheta(P_{\theta_0})] &=& \frac{1}{n}  \sum_{i=1}^{n} IF'_{i,T}(\psi,P_{\theta_0})\frac{\partial q(\vartheta)}{\partial
\vartheta}\Big
\vert_{\theta=\theta_0} 
+  \frac{1}{2n^2}   \sum_{i=1}^{n} \sum_{j=1}^{n} \left[ \varphi'_{i,j,T}(\psi,P_{\theta_0}) \frac{\partial q(\vartheta)}{\partial\vartheta}\Big
\vert_{\theta=\theta_0}  \nonumber \right. \\
 &   +& \left. IF'_{i,T}(\psi,P_{\theta_0}) \frac{\partial^2 q(\vartheta)}{\partial\vartheta
\partial\vartheta'}\Big\vert_{\theta=\theta_0} IF_{j,T}(\psi,P_{\theta_0}) \right] 
+ O_P(m^{-3/2}).\label{Eq: qvonMises} 
\end{eqnarray}

Similarly to \citet{GR96}, we delete the diagonal terms from (\ref{Eq: qvonMises}),  and we define the following $U$-statistic 
of order two (see, e.g., \citet{Serf09} or \citet{vdW98}, page 295) by making use of (4.9), (4.10) and (4.11):
\begin{eqnarray} \label{Eq: Ustat}
U_{n,T}(\psi,\theta_0) &=& \frac{2}{n(n-1)}  \sum_{i=1}^{n-1} \sum_{j=i+1}^{n} h_{i,j,T}\left(\psi,P_{\theta_0} \right) \nn \\
 &=& \frac{2}{n}\sum_{i=1}^n g_{i,T}\left(\psi,P_{\theta_0} \right)+\frac{2}{n(n-1)}\sum_{i=1}^{n-1}\sum_{j=i+1}^{n} \gamma_{i,j,T}(\psi,P_{\theta_0}).
\end{eqnarray}

Then, we remark that $q[\vartheta(P_{n,T})] - q[\vartheta(P_{\theta_0})] $ in (\ref{Eq: qvonMises}) is equivalent (up to $O_P(m^{-3/2})$) to $U_{n,T}(\psi,\theta_0)$, namely 
\begin{equation} \label{Eq: Uapprox}
q[\vartheta(P_{n,T})] - q[\vartheta(P_{\theta_0})] = U_{n,T}(\psi,\theta_0) +  O_P(m^{-3/2}),
\end{equation}
which concludes the proof.
\end{proof}

\subsection{Proof of Proposition 3
} \label{App: proof_edge}

To derive the Edgeworth expansion for $\sigma_{n,T}^{-1}\{q[\vartheta(P_{n,T})] - q[\vartheta(P_{\theta_0})]\}$, 
we first introduce two lemmas.
\begin{lemma} \label{Lemma_Mmatrix}
Under Assumptions A-D, for all $i$, $j$ and $i \ne j$,
\begin{equation}
M_{i,T}^{-1}(\psi, P_{\theta_0})M_{j,T}(\psi, P_{\theta_0}) = I_d +O(n^{-1}),
\end{equation}
with $I_d$ representing the $(d\times d)$ identity matrix.
\end{lemma}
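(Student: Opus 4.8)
The plan is to combine a first-order matrix perturbation argument with the explicit per-unit sensitivity formulas recorded in Appendix \ref{Section: Appendix2}. Writing
\[
M_{i,T}^{-1}(\psi,P_{\theta_0})\,M_{j,T}(\psi,P_{\theta_0}) = I_d + M_{i,T}^{-1}(\psi,P_{\theta_0})\bigl[M_{j,T}(\psi,P_{\theta_0}) - M_{i,T}(\psi,P_{\theta_0})\bigr],
\]
the statement reduces to two claims: (i) $M_{i,T}^{-1}(\psi,P_{\theta_0}) = O(1)$ uniformly in $i$ and $n$; and (ii) the sensitivity matrices are homogeneous across units at first order, $M_{j,T}(\psi,P_{\theta_0}) - M_{i,T}(\psi,P_{\theta_0}) = O(n^{-1})$ entrywise. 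Granting (i) and (ii), the product of the bounded inverse with the $O(n^{-1})$ difference is $O(n^{-1})$, which is exactly the asserted expansion.

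For (i) I would use the nonsingularity part of Assumptions A--D: the per-unit sensitivity $M_{i,T}(\psi,P_{\theta_0})$ is, up to terms vanishing with $n$, a positive definite matrix whose smallest eigenvalue is bounded away from zero uniformly in $i$, so that $\|M_{i,T}^{-1}(\psi,P_{\theta_0})\|$ stays bounded for all large $n$. For (ii) I would insert the closed-form entries of $M_{i,T}(\psi,P_{\theta_0})$ taken from Appendix \ref{Section: Appendix2}. Each entry separates into a common part and a unit-specific part. The common part is assembled from normalized traces of products of the spatial operators $W_n$, $G_n$, $H_n$ (and their symmetrizations and powers, all evaluated at $\theta_0$) together with population moments of the errors and regressors that carry no dependence on the label $i$; these pieces are identical for units $i$ and $j$ and drop out of the difference. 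What survives in $M_{j,T}(\psi,P_{\theta_0}) - M_{i,T}(\psi,P_{\theta_0})$ are only the genuinely local functionals, namely the $i$-th versus $j$-th diagonal elements and short-range quadratic forms of those same matrix products.

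The crux is thus to show that these residual local terms differ across units by $O(n^{-1})$ rather than by an $O(1)$ fluctuation. Here I would invoke the uniform boundedness in row and column sums of $W_n$, $S_n^{-1}$ and $R_n^{-1}$ (hence of $G_n$ and $H_n$) postulated in Assumptions A--D, together with the condition checked analytically in Appendix \ref{Sec: D_check} (Assumption D(iv)), which governs the cross-sectional variation of the diagonal entries of the relevant matrix products. Under these conditions each diagonal element lies within $O(n^{-1})$ of its cross-sectional average, so subtracting the $i$-th from the $j$-th contribution yields a difference of that order; the $\beta$- and $\sigma^2$-blocks are treated analogously, using the homogeneity imposed on the regressor and error moments to dispose of any non-trace remainder. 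Establishing this uniform $O(n^{-1})$ control, entry by entry and across all blocks of the parameter vector, is the main obstacle, and it is the step where the spatial summability conditions of Assumptions A--D do the essential work.
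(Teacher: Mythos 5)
Your proposal is correct and rests on the same basic mechanism as the paper's proof---a uniformly bounded inverse multiplied by an $O(n^{-1})$ difference of sensitivity matrices---but it differs in two respects, one of which makes it cleaner and one of which makes it longer than necessary. Writing $M_i \equiv M_{i,T}(\psi,P_{\theta_0})$, your decomposition is the exact identity $M_i^{-1}M_j = I_d + M_i^{-1}\left[M_j - M_i\right]$, whereas the paper writes $M_i^{-1}M_j = \left\{I_d - M_j^{-1}\left[M_j - M_i\right]\right\}^{-1}$ and then performs a Neumann (Taylor) expansion of this inverse, resumming the tail to bound the remainder. Your route avoids the series expansion and its implicit convergence requirement: Assumption A gives $\Vert M_i^{-1}\Vert = O(1)$, the difference is $O(n^{-1})$, and submultiplicativity of the norm finishes the argument in one line. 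On the other hand, you treat the bound $\Vert M_j - M_i\Vert = O(n^{-1})$ as the crux to be \emph{derived} from the closed-form entries of $M_{i,T}$, uniform row/column-sum boundedness of $W_n$, $S_n^{-1}$, $R_n^{-1}$, and control of the cross-sectional variation of diagonal entries of matrix products. In the paper this is not derived but assumed: Assumption D(iv) states precisely that $\Vert M_{i,T}(\psi,P_{\theta_0}) - M_{j,T}(\psi,P_{\theta_0})\Vert = O(n^{-1})$, and the proof invokes it, together with Assumption A, to conclude $\Vert M_j^{-1}\left[M_j - M_i\right]\Vert = O(n^{-1})$ directly. The structural derivation you sketch is essentially what the paper relegates to Appendix \ref{Sec: D_check} as an analytical plausibility check of D(iv) in the SAR(1) special case; it is not wrong, but it is redundant for the lemma as stated, and it is also the vaguest part of your write-up (the entry-by-entry, block-by-block control is only outlined). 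In short, your algebra buys a shorter and more elementary core argument, while the paper's assumption structure lets you skip entirely the step you identified as the main obstacle.
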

\begin{proof}  From the definition of $M_{i,T}(\psi, P_{\theta_0})$ in (4.4), 
we have:
{\small
\begin{eqnarray} \label{Eq: M_trans}
&&M_{i,T}^{-1}(\psi, P_{\theta_0})M_{j,T}(\psi, P_{\theta_0})\nn \\
&=&\left\{M_{j,T}(\psi, P_{\theta_0})-\left[M_{j,T}(\psi, P_{\theta_0})-M_{i,T}(\psi, P_{\theta_0})\right]\right\}^{-1}M_{j,T}(\psi, P_{\theta_0})\nn \\ 
&=& \left(M_{j,T}(\psi, P_{\theta_0})\left\{I_d-M_{j,T}^{-1}(\psi, P_{\theta_0})\left[M_{j,T}(\psi, P_{\theta_0})-M_{i,T}(\psi, P_{\theta_0})\right]\right\}\right)^{-1}M_{j,T}(\psi, P_{\theta_0}) \nn \\ 
&=& \left\{I_d-M_{j,T}^{-1}(\psi, P_{\theta_0})\left[M_{j,T}(\psi, P_{\theta_0})-M_{i,T}(\psi, P_{\theta_0})\right]\right\}^{-1}M_{j,T}^{-1}(\psi, P_{\theta_0}) M_{j,T}(\psi, P_{\theta_0}) \nn \\
&=& \left\{I_d-M_{j,T}^{-1}(\psi, P_{\theta_0})\left[M_{j,T}(\psi, P_{\theta_0})-M_{i,T}(\psi, P_{\theta_0})\right]\right\}^{-1}. 
\end{eqnarray}
}
By a Taylor expansion in (\ref{Eq: M_trans}), we get 
{\small
\begin{eqnarray} \label{Eq: M_taylor}
&&\left\{I_d-M_{j,T}^{-1}(\psi, P_{\theta_0})\left[M_{j,T}(\psi, P_{\theta_0})-M_{i,T}(\psi, P_{\theta_0})\right]\right\}^{-1} \nn \\
&=& I_d + M_{j,T}^{-1}(\psi, P_{\theta_0})\left[M_{j,T}(\psi, P_{\theta_0})-M_{i,T}(\psi, P_{\theta_0})\right]+ \left(M_{j,T}^{-1}(\psi, P_{\theta_0})\left[M_{j,T}(\psi, P_{\theta_0})-M_{i,T}(\psi, P_{\theta_0})\right]\right)^2 \nn \\
&+& \left(M_{j,T}^{-1}(\psi, P_{\theta_0})\left[M_{j,T}(\psi, P_{\theta_0})-M_{i,T}(\psi, P_{\theta_0})\right]\right)^3+ \cdots \nn \\
&=& I_d+ M_{j,T}^{-1}(\psi, P_{\theta_0})\left[M_{j,T}(\psi, P_{\theta_0})-M_{i,T}(\psi, P_{\theta_0})\right]\left\{I_d + M_{j,T}^{-1}(\psi, P_{\theta_0})\left[M_{j,T}(\psi, P_{\theta_0})-M_{i,T}(\psi, P_{\theta_0})\right] \right. \nn \\
&+&\left.\left(M_{j,T}^{-1}(\psi, P_{\theta_0})\left[M_{j,T}(\psi, P_{\theta_0})-M_{i,T}(\psi, P_{\theta_0})\right]\right)^2+\cdots \right\} \nn \\
&=& I_d + M_{j,T}^{-1}(\psi, P_{\theta_0})\left[M_{j,T}(\psi, P_{\theta_0})-M_{i,T}(\psi, P_{\theta_0})\right]\left\{I_d - M_{j,T}^{-1}(\psi, P_{\theta_0})\left[M_{j,T}(\psi, P_{\theta_0})-M_{i,T}(\psi, P_{\theta_0})\right] \right\}^{-1}. \nn \\
\end{eqnarray}}

From Assumption A, we know that $M_{j,T}^{-1}(\psi, P_{\theta_0}) =O(1)$. Under Assumption D({\it iv}), we get 
{\small 
\begin{eqnarray} \label{Eq: matrix_a}
\vert\vert M_{j,T}^{-1}(\psi, P_{\theta_0})\left[M_{j,T}(\psi, P_{\theta_0})-M_{i,T}(\psi, P_{\theta_0})\right]\vert \vert
=O(n^{-1}).
\end{eqnarray}
}

By (\ref{Eq: M_trans}), (\ref{Eq: M_taylor}) and (\ref{Eq: matrix_a}), we finally find
{\small
\begin{eqnarray}
&&\vert\vert M_{i,T}^{-1}(\psi, P_{\theta_0})M_{j,T}(\psi, P_{\theta_0})-I_d\vert\vert \nn \\
&=& \vert\vert M_{j,T}^{-1}(\psi, P_{\theta_0})\left[M_{j,T}(\psi, P_{\theta_0})-M_{i,T}(\psi, P_{\theta_0})\right]\left\{I_d - M_{j,T}^{-1}(\psi, P_{\theta_0})\left[M_{j,T}(\psi, P_{\theta_0})-M_{i,T}(\psi, P_{\theta_0})\right] \right\}^{-1}\vert\vert \nn \\
&\leq& \vert\vert M_{j,T}^{-1}(\psi, P_{\theta_0})\left[M_{j,T}(\psi, P_{\theta_0})-M_{i,T}(\psi, P_{\theta_0})\right]\vert \vert \ \vert\vert \left \{ I_d - M_{j,T}^{-1}(\psi, P_{\theta_0})\left[M_{j,T}(\psi, P_{\theta_0})-M_{i,T}(\psi, P_{\theta_0})\right]\right\}^{-1}\vert\vert \nn \\
&=& \vert\vert M_{j,T}^{-1}(\psi, P_{\theta_0})\left[M_{j,T}(\psi, P_{\theta_0})-M_{i,T}(\psi, P_{\theta_0})\right]\vert \vert \  \vert\vert I_d+ M_{j,T}^{-1}(\psi, P_{\theta_0})\left[M_{j,T}(\psi, P_{\theta_0})-M_{i,T}(\psi, P_{\theta_0})\right]+\cdots \vert \vert  \nn \\
&=& O(n^{-1}).
\end{eqnarray}
}
Thus, 
{\small
\begin{equation}
M_{i,T}^{-1}(\psi, P_{\theta_0})M_{j,T}(\psi, P_{\theta_0}) = I_d +O(n^{-1}).
\end{equation}
}
\end{proof}

\begin{lemma} \label{Lemma_cgf}
Under Assumptions A-D, 
\begin{itemize}
    \item[(i)] $g_{i,T}\left(\psi,P_{\theta_0} \right)$ and $\gamma_{i,j,T}(\psi,P_{\theta_0})$, $1\leq i < j \leq n $ are asymptotically pairwise uncorrelated.
The mean, variance, the standardized third and fourth cumulant of $U_{n,T}$
(defined by (\ref{Eq: Ustat})) are given by the following expressions.
    \item[(ii)] Mean:
        \begin{equation}
            \mu_{n,T}=\mathbb{E}\left[U_{n,T}(\psi,\theta_0)\right]=0.
        \label{Eq: Mean}
        \end{equation}
    \item[(iii)] Variance: 
        \begin{eqnarray} \label{Eq: variance}
            \sigma_{n,T}^2 &=& Var\left[U_{n,T}(\psi,\theta_0)\right]\\ \nn 
            &=& \frac{4}{n}\sigma_{g}^2+\frac{4}{n^2(n-1)^2}  \sum_{i=1}^{n-1} \sum_{j=i+1}^{n} \mathbb{E}\left[\gamma_{i,j,T}^2(\psi,P_{\theta_0})\right]+O(n^{-3}), \nn
        \end{eqnarray}
        where 
        \begin{equation}   \label{Eq: var_g}
            \sigma_{g}^2 = \frac{1}{n}\sum_{i=1}^{n}\mathbb{E}\left[g_{i,T}^2(\psi,P_{\theta_0})\right].
        \end{equation}
    \item[(iv)] Third standardized cumulant is   such that   
$$   \tilde{\kappa}_{n,T}^{(3)} 
             =  \mathbb{E}\left[ U_{n,T}^3(\psi,\theta_0)/\sigma_{n,T}^3\right] = n^{-1/2}\kappa_{n,T}^{(3)}+O(n^{-3/2})
             $$
        where: 
        \begin{equation}
            \kappa_{n,T}^{(3)} =  \sigma_{g}^{-3}  (\overline{g^3}+3 \overline{g_1g_2\gamma_{12}}) ,
            \label{Eq: 3rdk}
        \end{equation}
        \begin{equation}  \label{Eq: g3}
            \overline{g^3}= \frac{1}{n}\sum_{i=1}^{n}\mathbb{E}\left[g_{i,T}^3(\psi,P_{\theta_0})\right],
        \end{equation}
        \begin{equation}   \label{Eq: g1g2gamma12}
            \overline{g_1g_2\gamma_{12}}= \frac{2}{n(n-1)}\sum_{i=1}^{n-1} \sum_{j=i+1}^{n}\mathbb{E}\left[g_{i,T}\left(\psi,P_{\theta_0} \right) g_{j,T}\left(\psi,P_{\theta_0} \right)\gamma_{i,j,T}(\psi,P_{\theta_0})\right].
        \end{equation}
    \item[(v)] The fourth standardized cumulant is   such that:
       $$    
         \tilde{\kappa}_{n,T}^{(4)}   \nn
         =  \mathbb{E}\left[ U_{n,T}^4(\psi,\theta_0)/\sigma_{n,T}^4\right]-3 =  n^{-1}\kappa_{n,T}^{(4)}+O(n^{-2}),
         $$
         where
        \begin{equation}
            \kappa_{n,T}^{(4)} =  \sigma_{g}^{-4}(\overline{g^4}+12 \overline{g_1g_2\gamma_{13}\gamma_{23}}+12 \overline{g_1^2g_2\gamma_{12}})-3,
            \label{Eq: 4rdk}
        \end{equation}   
        \begin{equation}   \label{Eq: g4}
            \overline{g^4}= \frac{1}{n}\sum_{i=1}^{n}\mathbb{E}\left[g_{i,T}^4(\psi,P_{\theta_0})\right],
        \end{equation}
        \begin{equation}  \label{Eq: g1g2ga123}
            \overline{g_1g_2\gamma_{13}\gamma_{23}}= \frac{2}{n(n-1)(n-2)}\sum_{i=1}^{n-1} \sum_{j=i+1}^{n}\sum_{\substack{k=1 \\ k\ne i,j}}^{n}\mathbb{E}\left[g_{i,T}\left(\psi,P_{\theta_0} \right) g_{j,T}\left(\psi,P_{\theta_0} \right)\gamma_{i,k,T}(\psi,P_{\theta_0})\gamma_{j,k,T}(\psi,P_{\theta_0})\right],
        \end{equation}
        \begin{equation}  \label{Eq: g12g2ga12}
            \overline{g_1^2g_2\gamma_{12}}= \frac{1}{n(n-1)}\sum_{i=1}^{n-1} \sum_{j=i+1}^{n}\mathbb{E}\left[\left(g_{i,T}\left(\psi,P_{\theta_0} \right)+g_{j,T}\left(\psi,P_{\theta_0} \right)\right)g_{i,T}\left(\psi,P_{\theta_0} \right) g_{j,T}\left(\psi,P_{\theta_0} \right)\gamma_{i,j,T}(\psi,P_{\theta_0})\right].
        \end{equation}
\end{itemize}
\end{lemma}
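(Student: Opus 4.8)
The plan is to treat $U_{n,T}$ through the Hoeffding-type decomposition already displayed in (\ref{Eq: Ustat}), writing it as the sum of a linear (projection) part $\frac{2}{n}\sum_i g_{i,T}$ and a completely degenerate part $\frac{2}{n(n-1)}\sum_{i<j}\gamma_{i,j,T}$, and then to read off the mean, the variance and the third and fourth standardized cumulants by expanding the corresponding powers of $U_{n,T}$ and collecting the leading-order-in-$n$ contributions. The one structural feature that replaces the exact orthogonality available in the classical i.i.d. theory is Lemma \ref{Lemma_Mmatrix}: since $M_{i,T}^{-1}M_{j,T}=I_d+O(n^{-1})$, the matrices entering $g_{i,T}$ and $\gamma_{i,j,T}$ are interchangeable up to $O(n^{-1})$, and this is what turns ``exact degeneracy'' into ``degeneracy up to a controlled error''. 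Throughout, I would use Assumptions A--D to bound the relevant moments of the scores uniformly in $i,t$ and to control spatial dependence, so that covariances between blocks indexed by weakly linked sites are negligible at the claimed order.

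For part (ii) the mean is immediate: $IF_{i,T}$ has expectation zero by construction, so $E[g_{i,T}]=0$, and $\gamma_{i,j,T}$ is the degenerate remainder and hence also mean zero, which gives (\ref{Eq: Mean}). For part (i) I would show that the covariances $\mathrm{Cov}(g_{i,T},g_{j,T})$, $\mathrm{Cov}(g_{i,T},\gamma_{j,k,T})$ and $\mathrm{Cov}(\gamma_{i,j,T},\gamma_{k,l,T})$ (over non-identical index sets) are negligible: in the i.i.d. case these vanish exactly by independence across sites and the conditional-degeneracy property $E[\gamma_{i,j,T}\mid i]=0$, and here the same cancellations hold up to the $O(n^{-1})$ slippage quantified by Lemma \ref{Lemma_Mmatrix} together with the spatial weak-dependence bounds of Assumption D. Part (iii) then follows by writing $\sigma_{n,T}^2=\mathrm{Var}\!\big(\tfrac{2}{n}\sum_i g_{i,T}\big)+\mathrm{Var}\!\big(\tfrac{2}{n(n-1)}\sum_{i<j}\gamma_{i,j,T}\big)+2\,\mathrm{Cov}(\cdot,\cdot)$; the cross term is negligible by (i), the linear part contributes $\frac{4}{n^2}\sum_i E[g_{i,T}^2]=\frac{4}{n}\sigma_g^2$ with $\sigma_g^2$ as in (\ref{Eq: var_g}), and in the quadratic part only the diagonal pairs $\{i,j\}=\{k,l\}$ survive, yielding the double sum of $E[\gamma_{i,j,T}^2]$ in (\ref{Eq: variance}) with all off-diagonal contributions absorbed into $O(n^{-3})$.

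For parts (iv) and (v) the work is a bookkeeping of index-matching patterns. Since $\mu_{n,T}=0$, the third standardized cumulant equals $E[U_{n,T}^3]/\sigma_{n,T}^3$, and I would expand the cube of the two-part decomposition. The purely linear term $\big(\tfrac{2}{n}\sum_i g_{i,T}\big)^3$ contributes only through fully coincident indices $i=j=k$, giving $\tfrac{8}{n^2}\overline{g^3}$; the mixed term $3\big(\tfrac{2}{n}\sum g\big)^2\big(\tfrac{2}{n(n-1)}\sum\gamma\big)$ survives only when the two $g$-indices coincide with the two $\gamma$-indices, producing $\tfrac{24}{n^2}\overline{g_1g_2\gamma_{12}}$, while every term carrying an odd, isolated $g$ or $\gamma$ factor vanishes to leading order by the approximate degeneracy. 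Dividing by $\sigma_{n,T}^3\sim 8\sigma_g^3 n^{-3/2}$ reproduces (\ref{Eq: 3rdk}). The fourth standardized cumulant is $E[U_{n,T}^4]/\sigma_{n,T}^4-3=(E[U_{n,T}^4]-3\sigma_{n,T}^4)/\sigma_{n,T}^4$, and the analogous expansion produces more patterns: the fully-coincident linear term gives $\overline{g^4}$; the double-pair contractions of the linear part, after subtraction of $3\sigma_{n,T}^4$, leave exactly the constant $-3$ appearing in (\ref{Eq: 4rdk}); and the two surviving mixed patterns are the chained $\overline{g_1g_2\gamma_{13}\gamma_{23}}$ of (\ref{Eq: g1g2ga123}) and the $\overline{g_1^2g_2\gamma_{12}}$ of (\ref{Eq: g12g2ga12}), each entering with combinatorial weight $12$; normalising by $\sigma_{n,T}^4\sim16\sigma_g^4 n^{-2}$ gives (\ref{Eq: 4rdk}).

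The main obstacle I anticipate is not the leading-order algebra but the uniform control of the ``almost-vanishing'' terms. In the i.i.d. world the cross-covariances and odd-factor moments are identically zero; here they are only $O(n^{-1})$ per summand and must be shown to remain negligible after summation over the $O(n^2)$ (third cumulant) or $O(n^3)$ (fourth cumulant) index configurations, which is precisely where Lemma \ref{Lemma_Mmatrix} and the spatial weak-dependence conditions of Assumption D do the heavy lifting. Getting the orders right---so that these error sums land in $O(n^{-3/2})$ and $O(n^{-2})$ respectively rather than swamping the claimed cumulant magnitudes---is the delicate step, and I would carry it out by grouping summands according to how many sites are shared among the index sets and bounding each group separately.
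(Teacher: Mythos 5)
Your proposal is correct and follows essentially the same route as the paper's proof: the Hoeffding-type split of $U_{n,T}$ into the linear part $\tfrac{2}{n}\sum_i g_{i,T}$ and the degenerate part $\tfrac{2}{n(n-1)}\sum_{i<j}\gamma_{i,j,T}$, approximate degeneracy established through the conditional-expectation argument resting on Lemma \ref{Lemma_Mmatrix}, and then term-by-term expansion of the second, third and fourth moments with the same surviving index patterns, combinatorial weights, and normalizations $\sigma_{n,T}^3\sim 8\sigma_g^3 n^{-3/2}$ and $\sigma_{n,T}^4\sim 16\sigma_g^4 n^{-2}$. The error-control issue you flag (summing $O(n^{-1})$ per-term slippages over $O(n^2)$ or $O(n^3)$ configurations) is exactly what the paper's proof handles by invoking the pairwise-uncorrelation bound before collecting terms, so nothing essential is missing.
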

 \begin{proof}
\begin{itemize}
    \item[(i)] Under Lemma \ref{Lemma_Mmatrix} and the fact that $IF_{i,T}(\psi, P_{\theta_0})=O_P(1)$, using the definitions of $ M_{i,T}(\psi,P_{\theta _{0}})$ in (4.4), 
    $\gamma_{i,j,T}(\psi,P_{\theta_0})$ in (4.11), 
    and $\varphi_{i,j, T}(\psi,P_{\theta_0})$ in (4.7), 
    we get for the conditional expectation:
    {\small
    \begin{eqnarray}\label{Eq: ConE}
    &&\mathbb{E}\left[\gamma_{i,j,T}(\psi,P_{\theta_0})\Big \vert \frac{1}{T-1}\sum_{t=1}^{T}\psi_{i,t}(\theta_0) \right] \nn \\ \nn
    &=&\mathbb{E}\left[\frac{1}{2} 
    \left( \varphi'_{i,j, T}(\psi,P_{\theta_0})\frac{\partial q(\vartheta)}{\partial\vartheta}\Big\vert_{\theta=\theta_0}  + IF'_{i,T}(\psi,P_{\theta_0}) \frac{\partial^2 q(\vartheta)}{\partial\vartheta\partial\vartheta'}\Big\vert
    _{\theta=\theta_0} IF_{j,T}(\psi,P_{\theta_0})  \right)\Big \vert \frac{1}{T-1}\sum_{t=1}^{T}\psi_{i,t}(\theta_0) \right] \\ \nn
    &=&\frac{1}{2} IF'_{i,T}(\psi, P_{\theta_0})\frac{\partial q(\vartheta)}{\partial\vartheta}\Big\vert_{\theta=\theta_0} \\
    &+& \frac{1}{2} \left(
    M_{i,T}^{-1}(\psi,P_{\theta _{0}})\mathbb{E}\left[(T-1)^{-1}\sum_{t=1}^{T}\frac{\partial \psi_{j,t}(\theta)}{\partial\theta}\Big\vert_{\theta=\theta_0}\right]IF_{i,T}(\psi,P_{\theta_0}) 
    \right)'\frac{\partial q(\vartheta)}{\partial\vartheta}\Big\vert_{\theta=\theta_0} \nn \\ \nn
    &=& \frac{1}{2} IF'_{i,T}(\psi, P_{\theta_0})\frac{\partial q(\vartheta)}{\partial\vartheta}\Big\vert_{\theta=\theta_0}-\frac{1}{2} \left(
    M_{i,T}^{-1}(\psi,P_{\theta _{0}})M_{j,T}(\psi,P_{\theta _{0}})IF_{i,T}(\psi,P_{\theta_0}) 
    \right)'\frac{\partial q(\vartheta)}{\partial\vartheta}\Big\vert_{\theta=\theta_0}\\  \nn
    &=&\frac{1}{2} IF'_{i,T}(\psi, P_{\theta_0})\frac{\partial q(\vartheta)}{\partial\vartheta}\Big\vert_{\theta=\theta_0}-\frac{1}{2} IF'_{i,T}(\psi, P_{\theta_0})\frac{\partial q(\vartheta)}{\partial\vartheta}\Big\vert_{\theta=\theta_0} + O_P(n^{-1})\\ 
    &=&O_P(n^{-1}).
    \end{eqnarray}
    }
    So we deduce that  $g_{i,T}\left(\psi,P_{\theta_0} \right)$ in (4.10) 
    and $\gamma_{i,j,T}(\psi,P_{\theta_0})$  in (4.11), 
    $1\leq i < j \leq n $ are pairwise uncorrelated, up to an $O_P(n^{-1})$ term.
    
    \item[(ii)] From the independence of the estimating function and using (\ref{Eq: Ustat}) and (4.9), 
    we get for the mean $ \mu_{n,T}$ of $U_{n,T}(\psi,\theta_0)$:
    {\small
        \begin{eqnarray}
            \mu_{n,T}&=&\mathbb{E}\left[U_{n,T}(\psi,\theta_0)\right] \nn \\ \nn
            &=&\mathbb{E}\left[\frac{2}{n(n-1)}\sum_{i=1}^{n-1}\sum_{j=i+1}^{n} h_{i,j,T}\left(\psi,P_{\theta_0} \right)\right] \\ \nn
            &=&\frac{1}{n(n-1)}\sum_{i=1}^{n-1}\sum_{j=i+1}^{n}\mathbb{E}\left[ \left\{IF'_{i,T}(\psi,P_{\theta_0}) + IF'_{j,T}(\psi,P_{\theta_0}) +   \varphi'_{i,j,T}(\psi,P_{\theta_0}) \right\}
            \frac{\partial q(\vartheta)}{\partial\vartheta}\Big\vert_{\theta=\theta_0} \right. \\ \nn
            & + &\left. IF'_{i,T}(\psi,P_{\theta_0})  \frac{\partial^2 q(\vartheta)}{\partial\vartheta
            \partial\vartheta'}\Big\vert_{\theta=\theta_0} IF_{j,T}(\psi,P_{\theta_0}) \right] \nn \\&=&0.   
        \end{eqnarray}
        }
    \item[(iii)] From the asymptotic pairwise uncorrelation of $g_{i,T}\left(\psi,P_{\theta_0} \right)$
and $\gamma_{i,j,T}(\psi,P_{\theta_0})$, we know that $$\mathbb{E}\left[g_{i,T}(\psi,P_{\theta_0})\gamma_{i,j,T}(\psi,P_{\theta_0})\right] = O(n^{-1}).$$
 Then by using (\ref{Eq: Ustat}) and (\ref{Eq: var_g}), we get for the variance
 $\sigma_{n,T}^2$ of $U_{n,T}(\psi,\theta_0)$:
 {\small
        \begin{eqnarray} 
            \sigma_{n,T}^2 &=& Var\left[U_{n,T}(\psi,\theta_0)\right] \nn \\ \nn &=&\frac{4}{n^2}\sum_{i=1}^{n}\mathbb{E}\left[g_{i,T}^2(\psi,P_{\theta_0})\right]+  \frac{4}{n^2(n-1)^2}  \sum_{i=1}^{n-1} \sum_{j=i+1}^{n} \mathbb{E}\left[\gamma_{i,j,T}^2(\psi,P_{\theta_0})\right]\\ 
            &+& \frac{8}{n^2(n-1)^2}  \sum_{i=1}^{n-1} \sum_{j=i+1}^{n} \mathbb{E}\left[g_{i,T}(\psi,P_{\theta_0})\gamma_{i,j,T}(\psi,P_{\theta_0})\right] \nn \\        
            &=&\frac{4}{n}\sigma_{g}^2+\frac{4}{n^2(n-1)^2}  \sum_{i=1}^{n-1} \sum_{j=i+1}^{n} \mathbb{E}\left[\gamma_{i,j,T}^2(\psi,P_{\theta_0})\right]+O(n^{-3})
        \end{eqnarray}
        }
     \item[(iv)] Due to the asymptotic pairwise uncorrelation of $g_{i,T}\left(\psi,P_{\theta_0} \right)$
and $\gamma_{i,j,T}(\psi,P_{\theta_0})$, several expectations in the calculation of cumulants are of order $O(n^{-1})$, for example $\mathbb{E}\left[g^2_{i,T}(\psi,P_{\theta_0})\gamma_{i,j,T}(\psi,P_{\theta_0})\right]$, 
$1\leq i < j  \leq n$. Making use of (\ref{Eq: Ustat}), (\ref{Eq: variance}), (\ref{Eq: g3}), (\ref{Eq: g1g2gamma12}), and (\ref{Eq: 3rdk}), we get for the third cumulant of $\sigma_{n,T}^{-1}U_{n,T}(\psi,\theta_0)$:{\small
         \begin{eqnarray}
             \tilde{\kappa}_{n,T}^{(3)} 
             &=&  \mathbb{E}\left[ U_{n,T}^3(\psi,\theta_0)/\sigma_{n,T}^3\right] \nn \\
             &=& \sigma_{n,T}^{-3}\mathbb{E}\left[ \left(   \frac{2}{n}\sum_{i=1}^n g_{i,T}\left(\psi,P_{\theta_0} \right)+\frac{2}{n(n-1)}\sum_{i=1}^{n-1}\sum_{j=i+1}^{n} \gamma_{i,j,T}(\psi,P_{\theta_0}) \right )^3\right]\nn \\
             &=&  \sigma_{n,T}^{-3}\frac{8}{n^3(n-1)^3}  \sum_{i=1}^{n} (n-1)^3\mathbb{E}\left[g_{i,T}^3\left(\psi,P_{\theta_0} \right)\right]+\sigma_{n,T}^{-3}\frac{8}{n^3(n-1)^3}  \sum_{i=1}^{n-1} \sum_{j=i+1}^{n}\mathbb{E}\left[\gamma_{i,j,T}^3(\psi,P_{\theta_0}) \right]\nn \\
             &+& \sigma_{n,T}^{-3}\frac{8}{n^3(n-1)^3}  \sum_{i=1}^{n-1} \sum_{j=i+1}^{n}6(n-1)^2\mathbb{E}\left[g_{i,T}\left(\psi,P_{\theta_0} \right) g_{j,T}\left(\psi,P_{\theta_0} \right)\gamma_{i,j,T}(\psi,P_{\theta_0}) \right] \nn \\
             &+&\sigma_{n,T}^{-3}\frac{8}{n^3(n-1)^3}  \sum_{i=1}^{n-1} \sum_{j=i+1}^{n}3(n-1)\mathbb{E}\left[\left\{g_{i,T}\left(\psi,P_{\theta_0} \right) +g_{j,T}\left(\psi,P_{\theta_0} \right)\right\}\gamma_{i,j,T}^2(\psi,P_{\theta_0}) \right]\nn \\  
             &=&  \sigma_{n,T}^{-3}\frac{8}{n^3(n-1)^3}  \sum_{i=1}^{n} (n-1)^3\mathbb{E}\left[g_{i,T}^3\left(\psi,P_{\theta_0} \right)\right]\nn \\
             &+& \sigma_{n,T}^{-3}\frac{8}{n^3(n-1)^3}  \sum_{i=1}^{n-1} \sum_{j=i+1}^{n}6(n-1)^2\mathbb{E}\left[g_{i,T}\left(\psi,P_{\theta_0} \right) g_{j,T}\left(\psi,P_{\theta_0} \right)\gamma_{i,j,T}(\psi,P_{\theta_0}) \right] +O(n^{-3/2})\nn \\
             &=&  \sigma_{g}^{-3} n^{-1/2} (\overline{g^3}+3 \overline{g_1g_2\gamma_{12}}) +O(n^{-3/2}) \nn \\
             &=&  n^{-1/2}\kappa_{n,T}^{(3)}+O(n^{-3/2}).
         \end{eqnarray}
         }
     \item[(v)]
     Similarly, making use of (\ref{Eq: Ustat}), (\ref{Eq: variance}), (\ref{Eq: g4}), (\ref{Eq: g1g2ga123}), (\ref{Eq: g12g2ga12}), and (\ref{Eq: 4rdk}), we get for the fourth cumulant of $\sigma_{n,T}^{-1}U_{n,T}(\psi,\theta_0)$:
     \begin{footnotesize}
         \begin{eqnarray*}
         & &\tilde{\kappa}_{n,T}^{(4)}   \nn\\
         &=&  \mathbb{E}\left[ U_{n,T}^4(\psi,\theta_0)/\sigma_{n,T}^4\right]-3\\ \nn
         &=& -3 +  \sigma_{n,T}^{-4}\mathbb{E}\left[ \left(   \frac{2}{n}\sum_{i=1}^n g_{i,T}\left(\psi,P_{\theta_0} \right)+\frac{2}{n(n-1)}\sum_{i=1}^{n-1}\sum_{j=i+1}^{n} \gamma_{i,j,T}(\psi,P_{\theta_0})\right )^4\right] \\ \nn
         &=& -3+ \sigma_{n,T}^{-4}\frac{16}{n^4(n-1)^4} \sum_{i=1}^{n}(n-1)^4 \mathbb{E}\left[g_{i,T}^4\left(\psi,P_{\theta_0} \right) \right] + \sigma_{n,T}^{-4}\frac{16}{n^4(n-1)^4}  \sum_{i=1}^{n-1} \sum_{j=i+1}^{n} \mathbb{E}\left[\gamma_{i,j,T}^4(\psi,P_{\theta_0}) \right] \\ \nn
         &+&  \sigma_{n,T}^{-4}\frac{16}{n^4(n-1)^4}  \sum_{i=1}^{n-1} \sum_{j=i+1}^{n} 4(n-1)\mathbb{E}\left[\left(g_{i,T}\left(\psi,P_{\theta_0} \right) +g_{j,T}\left(\psi,P_{\theta_0} \right)\right)\gamma_{i,j,T}^3(\psi,P_{\theta_0}) \right] \\ \nn
         &+&  \sigma_{n,T}^{-4}\frac{16}{n^4(n-1)^4}  \sum_{i=1}^{n-1} \sum_{j=i+1}^{n} 6(n-1)^2\mathbb{E}\left[\left(g_{i,T}^2\left(\psi,P_{\theta_0} \right) +g_{j,T}^2\left(\psi,P_{\theta_0} \right)\right)\gamma_{i,j,T}^2(\psi,P_{\theta_0}) \right] \\ \nn
         &+&   \sigma_{n,T}^{-4}\frac{16}{n^4(n-1)^4}  \sum_{i=1}^{n-1} \sum_{j=i+1}^{n} 12(n-1)^3\mathbb{E}\left[\left(g_{i,T}^2\left(\psi,P_{\theta_0} \right)g_{j,T}\left(\psi,P_{\theta_0} \right) +g_{i,T}\left(\psi,P_{\theta_0} \right)g_{j,T}^2\left(\psi,P_{\theta_0} \right)\right)\gamma_{i,j,T}(\psi,P_{\theta_0}) \right]  \\ \nn 
         &+& \sigma_{n,T}^{-4}\frac{16}{n^4(n-1)^4}  \sum_{i=1}^{n-1} \sum_{j=i+1}^{n} 6(n-1)^2\mathbb{E}\left[g_{i,T}\left(\psi,P_{\theta_0} \right) g_{j,T}\left(\psi,P_{\theta_0} \right)\gamma_{i,j,T}^2(\psi,P_{\theta_0}) \right] \\ \nn
         &+& \sigma_{n,T}^{-4}\frac{16}{n^4(n-1)^4}  \sum_{i=1}^{n-1} \sum_{j=i+1}^{n} 6(n-1)^4\mathbb{E}\left[g_{i,T}^2\left(\psi,P_{\theta_0} \right) g_{j,T}^2\left(\psi,P_{\theta_0} \right) \right] \\ \nn
         &+& \sigma_{n,T}^{-4}\frac{16}{n^4(n-1)^4}  \sum_{i=1}^{n-1} \sum_{j=i+1}^{n} \sum_{\substack{k=1 \\ k\ne i,j}}^{n} 24(n-1)^2\mathbb{E}\left[g_{i,T}\left(\psi,P_{\theta_0} \right) g_{j,T}\left(\psi,P_{\theta_0} \right)\gamma_{i,k,T}(\psi,P_{\theta_0})\gamma_{j,k,T}(\psi,P_{\theta_0}) \right] \\ \nn
         &=& -3+\sigma_{n,T}^{-4}\frac{16}{n^4(n-1)^4} \sum_{i=1}^{n}(n-1)^4 \mathbb{E}\left[g_{i,T}^4\left(\psi,P_{\theta_0} \right) \right] \\ \nn
         &+& \sigma_{n,T}^{-4}\frac{16}{n^4(n-1)^4}  \sum_{i=1}^{n-1} \sum_{j=i+1}^{n} 6(n-1)^4\mathbb{E}\left[g_{i,T}^2\left(\psi,P_{\theta_0} \right) g_{j,T}^2\left(\psi,P_{\theta_0} \right) \right] \\ \nn
         &+& \sigma_{n,T}^{-4}\frac{16}{n^4(n-1)^4}  \sum_{i=1}^{n-1} \sum_{j=i+1}^{n} \sum_{\substack{k=1 \\ k\ne i,j}}^{n} 24(n-1)^2\mathbb{E}\left[g_{i,T}\left(\psi,P_{\theta_0} \right) g_{j,T}\left(\psi,P_{\theta_0} \right)\gamma_{i,k,T}(\psi,P_{\theta_0})\gamma_{j,k,T}(\psi,P_{\theta_0}) \right]  \nn \\ \nn 
         &+& \sigma_{n,T}^{-4}\frac{16}{n^4(n-1)^4}  \sum_{i=1}^{n-1} \sum_{j=i+1}^{n} 12(n-1)^3\mathbb{E}\left[\left(g_{i,T}\left(\psi,P_{\theta_0} \right) +g_{j,T}\left(\psi,P_{\theta_0} \right)\right)g_{i,T}\left(\psi,P_{\theta_0} \right)g_{j,T}\left(\psi,P_{\theta_0} \right)\gamma_{i,j,T}(\psi,P_{\theta_0}) \right] \nn \\
         &+& O(n^{-2}) \nn \\ 
         &=&  \sigma_{g}^{-4} n^{-1} (\overline{g^4}+12 \overline{g_1g_2\gamma_{13}\gamma_{23}}+12 \overline{g_1^2g_2\gamma_{12}})-3n^{-1} +O(n^{-2}) \nn \\
         &=& n^{-1}\kappa_{n,T}^{(4)}+O(n^{-2}).
         \end{eqnarray*}
    \end{footnotesize}
  \end{itemize}
\end{proof}
Now we can prove Proposition (3). 
Let $\Psi_{n,T}$ be the characteristic function (c.f.) of $\sigma_{n,T}^{-1}\{q[\vartheta(P_{n,T})] - q[\vartheta(P_{\theta_0})]\}$,
\begin{equation}
   \Psi_{n,T}(z)=\mathbb{E}\left[\exp\left(\iota t \sigma_{n,T}^{-1}\{q[\vartheta(P_{n,T})] - q[\vartheta(P_{\theta_0})]\}\right)\right], 
\end{equation}
where $\iota^2=-1$. Making use of $\kappa_{n,T}^{(3)}$ in (\ref{Eq: 3rdk}) and $\kappa_{n,T}^{(4)}$ in (\ref{Eq: 4rdk}), we define 
\begin{equation} \label{Eq: AppCF}
\Psi_{n,T}^{*}(z)=\left\{1+n^{-1/2}\kappa_{n,T}^{(3)}\frac{(\iota z)^3}{6}+n^{-1}\kappa_{n,T}^{(4)}\frac{(\iota z)^4}{24}+n^{-1}\left(\kappa_{n,T}^{(3)}\right)^2\frac{(\iota z)^6}{72}\right\}e^{-z^{2}/2}.
\end{equation} as the approximate c.f.. To prove (4.12) 
in Proposition 3, 
we use Esseen smoothing lemma as in \citet{F71_book} and show that there exist sequences $\{Z_n\}$ and $\{\varepsilon'_n\}$ such that $n^{-1}Z_n \rightarrow \infty$, $\varepsilon'_n \rightarrow 0$, and
\begin{equation}
    \int_{-Z_n}^{Z_n}\Big\vert \frac{\Psi_{n,T}(z)-\Psi_{n,T}^{*}(z)}{z} \Big\vert dz \leq \varepsilon'_nn^{-1}.
\end{equation}
We proceed along the same lines as in \citet{BGVZ86}. We work on a compact subset of $\mathbb{R}$ and we consider the c.f. for small $\vert z \vert$. Then, we prove Lemma \ref{OurLemma21}, which
essentially shows the validity of the Edgeworth by means of the Esseen lemma. With this regard, we flag that we need to prove the Esseen lemma within our setting (we are dealing with independent but not identically distributed random variables) and we cannot invoke directly the results in the the paper by \citet{BGVZ86}. To this end, we prove a new result similar to the one in Lemma 2.1 of the last paper, adapting their proof to our context---see 
Lemma \ref{OurLemma21} below. Finally, the application of the derived results concludes the proof.


\begin{lemma} \label{OurLemma21}
  Under Assumptions A-D, there exists a sequence $\varepsilon^{''}_{n} \downarrow 0$ such that for
  \begin{equation}
      z_n= n^{(r-1)/r}(\log n)^{-1}, 
  \end{equation}
  \begin{equation} \label{InE: ES}
      \int_{-z_n}^{z_n}\Big\vert \frac{\Psi_{n,T}(z)-\Psi_{n,T}^{*}(z)}{z} \Big\vert dz \leq \varepsilon''_nn^{-1}.
  \end{equation}
\end{lemma}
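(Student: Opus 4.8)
The plan is to approximate $\Psi_{n,T}$ by the truncated cumulant (Edgeworth) series $\Psi_{n,T}^{*}$ of (\ref{Eq: AppCF}) and to control the mismatch separately on a fixed compact neighbourhood of the origin and on the remaining moderate band up to $z_n$. The structural input is the Hoeffding-type splitting already recorded in (\ref{Eq: Ustat}): $U_{n,T}(\psi,\theta_0)$ is the sum of a linear part $\tfrac{2}{n}\sum_{i} g_{i,T}(\psi,P_{\theta_0})$, whose summands are independent across $i$ (though not identically distributed), and a degenerate quadratic part $\tfrac{2}{n(n-1)}\sum_{i<j}\gamma_{i,j,T}(\psi,P_{\theta_0})$ that enters only at higher order. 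Independence of the linear part makes its characteristic function factorise, which is exactly what allows a cumulant expansion of $\log\Psi_{n,T}$ and, on the moderate band, a direct decay estimate for $|\Psi_{n,T}|$.

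For small $|z|$ I would expand the log-characteristic function using the cumulants computed in Lemma \ref{Lemma_cgf}, writing
\begin{equation}
\log\Psi_{n,T}(z) = -\frac{z^2}{2} + n^{-1/2}\kappa_{n,T}^{(3)}\frac{(\iota z)^3}{6} + n^{-1}\kappa_{n,T}^{(4)}\frac{(\iota z)^4}{24} + r_{n,T}(z),
\end{equation}
where the explicit terms are the standardized variance, third and fourth cumulants from parts (iii)--(v) of Lemma \ref{Lemma_cgf}, and $r_{n,T}(z)$ gathers the higher cumulants together with the Taylor remainder, satisfying $r_{n,T}(z)=O(n^{-3/2}(1+|z|^{s}))$ for a fixed $s$. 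Exponentiating and Taylor-expanding $e^{(\cdot)}$ about $e^{-z^2/2}$ down to order $n^{-1}$ reproduces every term of $\Psi_{n,T}^{*}$, the cross term $n^{-1}(\kappa_{n,T}^{(3)})^2(\iota z)^6/72$ arising from the square of the $n^{-1/2}$ contribution. Hence $|\Psi_{n,T}(z)-\Psi_{n,T}^{*}(z)|\le e^{-z^2/2}P(|z|)\,o(n^{-1})$ with $P$ a fixed polynomial (starting at a power of $|z|$ at least $3$, since mean and variance are matched exactly); dividing by $|z|$ and integrating, the Gaussian factor renders $\int e^{-z^2/2}P(|z|)/|z|\,dz$ finite and delivers a bound of the form $\varepsilon''_n n^{-1}$ on this part of the range.

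On the moderate band $\delta\le |z|\le z_n$ the cumulant expansion is no longer the right tool; here I would bound $|\Psi_{n,T}(z)|$ and $|\Psi_{n,T}^{*}(z)|$ directly. The term $\Psi_{n,T}^{*}$ is $e^{-z^2/2}$ times a polynomial, so its contribution to the integral over $|z|\ge \delta$ is exponentially small. For $\Psi_{n,T}$ itself I would use the factorised characteristic function of the independent linear part to obtain a stretched-exponential decay bound valid uniformly up to $z_n$; the choice $z_n=n^{(r-1)/r}(\log n)^{-1}$ is dictated precisely by the moment order $r$ available under Assumptions A--D, being the largest band on which this decay, and the negligibility of $r_{n,T}$, can be guaranteed. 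Integrating $|\Psi_{n,T}(z)|/|z|$ over $\delta\le|z|\le z_n$ then stays below $\varepsilon''_n n^{-1}$, completing (\ref{InE: ES}).

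The main obstacle --- and the reason a direct appeal to \citet{BGVZ86} is unavailable --- is the passage from their i.i.d.\ setting to our independent, non-identically distributed summands, which is handled in two places. First, every per-observation moment must be replaced by an average over $i$, namely $\sigma_g^2$, $\overline{g^3}$, $\overline{g^4}$ and the mixed averages of Lemma \ref{Lemma_cgf}; using the uniform moment bounds of Assumptions A--D together with Lemma \ref{Lemma_Mmatrix} and the asymptotic pairwise uncorrelatedness of part (i), one must verify that these averages are bounded and that all neglected cross moments are genuinely $O(n^{-3/2})$, so that $r_{n,T}$ has the stated order uniformly in $z$. Second, and most delicate, is the moderate-band decay estimate for $|\Psi_{n,T}|$: the smoothness and non-degeneracy that \citet{BGVZ86} derive from a single common marginal must now be supplied uniformly in $i$ by Assumptions A--D, so that the product over the independent linear summands decays at the required rate up to $z_n$. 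Re-deriving their Lemma~2.1 in this uniform form is the technical heart of the argument; the Esseen smoothing step of \citet{F71_book} then packages the two-band estimate into (\ref{InE: ES}).
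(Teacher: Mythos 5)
Your overall architecture matches the paper's: the Hoeffding-type splitting of $U_{n,T}$ into a linear part and a degenerate quadratic part, a two-band treatment of the integral, Esseen smoothing to package the estimate, and the recognition that the independent-but-non-identically-distributed setting is what blocks a direct citation of \citet{BGVZ86}. The genuine gap is in your small-$|z|$ step. You expand $\log\Psi_{n,T}(z)$ in cumulants and assert that the remainder satisfies $r_{n,T}(z)=O(n^{-3/2}(1+|z|^{s}))$ uniformly. For a sum of \emph{independent} variables this would follow from per-summand moment bounds, because the log-characteristic function is additive; but $\sigma_{n,T}^{-1}U_{n,T}$ is a U-statistic, the quadratic part $U_{2,n,T}$ is dependent on the linear part, and $\log\Psi_{n,T}$ does not decompose into per-observation pieces. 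Lemma \ref{Lemma_cgf} supplies only the mean, the variance, and the third and fourth cumulants, each up to an error term; it says nothing about the fifth and higher joint cumulants of the $g_{i,T}$'s and $\gamma_{i,j,T}$'s, nor about the truncation error of the cumulant series, which is precisely what your bound on $r_{n,T}$ requires. Asserting that bound amounts to assuming the conclusion: making the formal cumulant expansion rigorous for a U-statistic is the hard content of the lemma, not a verification left to the reader.

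The paper avoids this by never taking the logarithm of the full characteristic function. In Steps 1--4 it expands $\Psi_{n,T}$ itself: the factor $\exp(\iota z\sigma_{n,T}^{-1}U_{2,n,T})$ is treated perturbatively to second order with an $O(\mathbb{E}\vert z\sigma_{n,T}^{-1}U_{2,n,T}\vert^{2+\delta})$ error as in (\ref{Eq: cfExpan}); the linear part factorizes into $\prod_{i}\Psi_{g,i,T}(z)$; the mixed moments such as $\mathbb{E}[g_{j,T}g_{k,T}\gamma_{j,k,T}]$ are extracted by the Taylor manipulations (\ref{Eq: 1stExp})--(\ref{Eq: CF_U_Exp}); and the classical Edgeworth expansion for independent non-identically distributed summands enters only through the factorized products (\ref{Eq: Psii})--(\ref{Eq: gP4}), where additivity of the log-characteristic function is actually available. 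This is what rigorously delivers (\ref{Eq: Psi_nt}), i.e. $\Psi_{n,T}(z)=\Psi^{*}_{n,T}(z)+o(n^{-1}\vert z\vert\mathcal{P}(\vert z\vert)e^{-z^{2}/4})+O(\vert n^{-1/2}z\vert^{2+\delta})$ on $\vert z\vert\leq\varepsilon' n^{1/2}$, after which the outer band and the smoothing step proceed as in \citet{BGVZ86}. Your moderate-band sketch has the analogous difficulty in milder form---$\vert\Psi_{n,T}\vert$ does not factorize either, so the decay bound must also pass through the perturbative treatment of $U_{2,n,T}$---though there your description is at least consistent with the paper's (equally terse) Step 5. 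To repair the proof you would have to replace the log-characteristic-function expansion by a direct expansion of $\Psi_{n,T}$ along the lines above, which is essentially to reproduce the paper's Steps 1--4.
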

\begin{proof} 
For the sake of readability, we split the proof in five steps. At the beginning of each step, we explain the goal of the derivation.
\begin{itemize}
\item[Step 1.] We approximate the characteristic function (c.f.) of $\hat{\theta}_{n,T}$ via the c.f. of $U_{n,T}$, up to the suitable order. This yields (\ref{Eq: cfExpan1}).

   Let us decompose the $U$-statistic $U_{n,T}$ in (\ref{Eq: Ustat}) as $U_{n,T} = U_{1,n,T} + U_{2,n,T}$ with  $U_{1,n,T} =  \frac{2}{n}\sum_{i=1}^n g_{i,T}\left(\psi,P_{\theta_0} \right)$ and $U_{2,n,T} =  \frac{2}{n(n-1)}\sum_{i=1}^{n-1}\sum_{j=i+1}^{n} \gamma_{i,j,T}(\psi,P_{\theta_0})$.
   Making use of (2.6), (2.7) as in \citet{BGVZ86}, (\ref{Eq: Uapprox}) and the $U$-statistic decomposition, we can write
   {\small
   \begin{eqnarray} 
       \Psi_{n,T}(z) & = & \mathbb{E}\left[\exp\left(\iota z\sigma_{n,T}^{-1}U_{1,n,T}\right)\left(1+\iota z\sigma_{n,T}^{-1}U_{2,n,T}-\frac{1}{2}z^2\sigma_{n,T}^{-2}U_{2,n,T}^2\right)\right] \notag \\ & & \qquad +O(E\vert z \sigma_{n,T}^{-1}U_{2,n,T} \vert ^{2+\delta})+O(n^{-3/2}\vert z \sigma_{n,T}^{-1}\vert ), \label{Eq: cfExpan}
   \end{eqnarray}}
   for $\delta \in (0,1]$. Let 
   {\small
   \begin{equation}
        \Psi_{g,i,T}(z) = \mathbb{E}\left[ \exp\left( \iota z\sigma_{n,T}^{-1}\frac{2}{n}g_{i,T}(\psi,P_{\theta_0}) \right)\right] 
   \end{equation}}
   be the c.f. of $ \sigma_{n,T}^{-1}\frac{2}{n}g_{i,T}(\psi,P_{\theta_0})$. In view of (\ref{Eq: variance}), and the fact that $\mathbb{E}\left[\vert U_{2,n,T}\vert ^{2+\delta}\right]=O(n^{2+\delta})$ (see \citet{CJ78}), we rewrite (\ref{Eq: cfExpan}) as 
  \begin{footnotesize}
   \begin{eqnarray}  \label{Eq: cfExpan1}
   &&\Psi_{n,T}(z)\nn \\ &=& \prod_{i=1}^{n} \Psi_{g,i,T}(z) \nn \\  
   &+&\sum_{j=1}^{n-1}\sum_{k=j+1}^{n}\left\{\prod_{\substack{i=1 \\ i\ne j,k}}^{n} \Psi_{g,i,T}(z)\right\}\left\{\iota z\sigma_{n,T}^{-1} \mathbb{E}\left[\exp\left(\iota z\sigma_{n,T}^{-1}\frac{2}{n}\left\{g_{j,T}(\psi,P_{\theta_0})+g_{k,T}(\psi,P_{\theta_0})\right\}\right)\frac{2\gamma_{j,k,T}(\psi,P_{\theta_0})}{n(n-1)}\right] \right.\nn \\
   &-& \left. \frac{1}{2}z^2\sigma_{n,T}^{-2} \mathbb{E}\left[\exp\left(\iota z\sigma_{n,T}^{-1}\frac{2}{n}\left\{g_{j,T}(\psi,P_{\theta_0})+g_{k,T}(\psi,P_{\theta_0})\right\}\right)\frac{4}{n^2(n-1)^2}\gamma_{j,k,T}^2(\psi,P_{\theta_0})\right] \right\}\nn \\
 &-& \sum_{j=1}^{n-2}\sum_{k=j+1}^{n-1}\sum_{m=k+1}^{n}\left\{\prod_{\substack{i=1 \\ i\ne j,k,m}}^{n} \Psi_{g,i,T}(z)\right\}z^2\sigma_{n,T}^{-2} \frac{4}{n^2(n-1)^2} \nn \\ 
&\times& \mathbb{E}\left[\exp\left(\iota z\sigma_{n,T}^{-1}\frac{2}{n}\left\{g_{j,T}(\psi,P_{\theta_0})+g_{k,T}(\psi,P_{\theta_0})+g_{m,T}(\psi,P_{\theta_0})\right \}\right) \left\{ \gamma_{j,k,T}(\psi,P_{\theta_0})\gamma_{j,m,T}(\psi,P_{\theta_0})\right. \right. \nn \\
&+& \left. \left. \gamma_{j,m,T}(\psi,P_{\theta_0})\gamma_{k,m,T}(\psi,P_{\theta_0})+\gamma_{j,k,T}(\psi,P_{\theta_0})\gamma_{k,m,T}(\psi,P_{\theta_0}) \right\}\right. \bigg]\nn \\
     &-&  \sum_{j=1}^{n-3}\sum_{k=j+1}^{n-2}\sum_{m=k+1}^{n-1}\sum_{l=m+1}^{n}\left\{\prod_{\substack{i=1 \\ i\ne j,k,m,l}}^{n} \Psi_{g,i,T}(z)\right\}z^2\sigma_{n,T}^{-2}\frac{4}{n^2(n-1)^2} \nn \\
&\times&\mathbb{E}\left[\exp\left(\frac{2\iota z}{n\sigma_{n,T}}\left\{g_{j,T}(\psi,P_{\theta_0})+g_{k,T}(\psi,P_{\theta_0})+g_{m,T}(\psi,P_{\theta_0})+g_{l,T}(\psi,P_{\theta_0})\right \}\right)\left\{\gamma_{j,k,T}(\psi,P_{\theta_0})\gamma_{m,l,T}(\psi,P_{\theta_0})\right. \right. \nn \\
&+& \left. \left. \gamma_{j,m,T}(\psi,P_{\theta_0})\gamma_{k,l,T}(\psi,P_{\theta_0})+\gamma_{j,l,T}(\psi,P_{\theta_0})\gamma_{k,m,T}(\psi,P_{\theta_0}) \right\}\right. \bigg]\nn \\
   &+& O(\vert n^{-1/2} z  \vert ^{2+\delta}+\vert n^{-1}z \vert ).
   \end{eqnarray}
 \end{footnotesize}  
\item[Step 2.] To match the expression of $\Psi^*_{n,T}(z)$ as in (\ref{Eq: AppCF}), we need an expansion for each of the terms in (\ref{Eq: cfExpan1}). To this end, we work on the exponential terms in (\ref{Eq: cfExpan1}). Here, we focus on the first exponential term and get (\ref{Eq: 1stExp}). We can repeat the computations for the other terms, and  those tedious developments follow similar arguments.
   We expand the first exponential term in (\ref{Eq: cfExpan1}) by using (2.7) in \citet{BGVZ86}, (\ref{Eq: variance}) and Assumption D. Thus,  we obtain
 \begin{footnotesize}
   \begin{eqnarray} \label{Eq: 1stExp}
        & &\mathbb{E}\left[\exp\left(\iota z\sigma_{n,T}^{-1}\frac{2}{n}\left\{g_{j,T}(\psi,P_{\theta_0})+g_{k,T}(\psi,P_{\theta_0})\right\}\right)\frac{2}{n(n-1)}\gamma_{j,k,T}(\psi,P_{\theta_0})\right] \nn \\
        &=& \mathbb{E}\left[ \left(\exp\left\{\iota z\sigma_{n,T}^{-1}\frac{2}{n}g_{j,T}(\psi,P_{\theta_0})\right\} -1-\iota z\sigma_{n,T}^{-1}\frac{2}{n}g_{j,T}(\psi,P_{\theta_0}) \right)  \right. \nn \\
        &\times& \left. \left(\exp\left\{\iota z\sigma_{n,T}^{-1}\frac{2}{n}g_{k,T}(\psi,P_{\theta_0})\right\} -1-\iota z\sigma_{n,T}^{-1}\frac{2}{n}g_{k,T}(\psi,P_{\theta_0}) \right) \frac{2}{n(n-1)}\gamma_{j,k,T}(\psi,P_{\theta_0})\right] \nn \\
        &+&  \mathbb{E}\left[ \iota z\sigma_{n,T}^{-1} \frac{4g_{k,T}(\psi,P_{\theta_0})\gamma_{j,k,T}(\psi,P_{\theta_0})}{n^2(n-1)} \left(\exp\left\{\iota z\sigma_{n,T}^{-1}\frac{2}{n}g_{j,T}(\psi,P_{\theta_0})\right\}- \sum_{\nu=0}^{2} \frac{\left\{\iota z\sigma_{n,T}^{-1}\frac{2}{n}g_{j,T}(\psi,P_{\theta_0})\right\}^{\nu}}{\nu!}  \right)\right]\nn \\
        &+&   \mathbb{E}\left[ \iota z\sigma_{n,T}^{-1} \frac{4g_{j,T}(\psi,P_{\theta_0}) \gamma_{j,k,T}(\psi,P_{\theta_0})}{n^2(n-1)} \left(\exp\left\{\iota z\sigma_{n,T}^{-1}\frac{2}{n}g_{k,T}(\psi,P_{\theta_0})\right\}- \sum_{\nu=0}^{2} \frac{\left\{\iota z\sigma_{n,T}^{-1}\frac{2}{n}g_{k,T}(\psi,P_{\theta_0})\right\}^{\nu}}{\nu!} \right)\right]\nn \\
        &-& \mathbb{E}\left[ z^2 \sigma_{n,T}^{-2}\frac{4}{n^2}g_{j,T}(\psi,P_{\theta_0})g_{k,T}(\psi,P_{\theta_0})\frac{2}{n(n-1)}\gamma_{j,k,T}(\psi,P_{\theta_0})\right] \nn \\
        &-& \mathbb{E}\left[ \left(\iota z^3 \sigma_{n,T}^{-3}\frac{4}{n^3}g_{j,T}^2(\psi,P_{\theta_0})g_{k,T}(\psi,P_{\theta_0}) +\iota z^3 \sigma_{n,T}^{-3}\frac{4}{n^3}g_{j,T}(\psi,P_{\theta_0})g_{k,T}^2(\psi,P_{\theta_0})\right) \frac{2}{n(n-1)}\gamma_{j,k,T}(\psi,P_{\theta_0})\right] \nn \\
        &=& -z^2 \sigma_{n,T}^{-2}\frac{8}{n^3(n-1)}\mathbb{E}\left[ g_{j,T}(\psi,P_{\theta_0})g_{k,T}(\psi,P_{\theta_0})\gamma_{j,k,T}(\psi,P_{\theta_0})\right] \nn \\
        &-& \iota z^3 \sigma_{n,T}^{-3}\frac{8}{n^4(n-1)} \mathbb{E}\left[ \left\{g_{j,T}^2(\psi,P_{\theta_0})g_{k,T}(\psi,P_{\theta_0}) +g_{j,T}(\psi,P_{\theta_0})g_{k,T}^2(\psi,P_{\theta_0})\right\} \gamma_{j,k,T}(\psi,P_{\theta_0})\right] \nn \\
        &+& O(n^{-4}z^{4}+ n^{-2}\vert n^{-1/2}z \vert ^{3+\delta}),
   \end{eqnarray}
 \end{footnotesize}  
with $\delta \in (0,1]$.
   Similarly, we expand all the other exponentials in (\ref{Eq: cfExpan1}) and after some algebraic simplifications, we get 
  \begin{footnotesize}
   \begin{eqnarray}  
        &&\Psi_{n,T}(z) \nn \\  
        &=& \left\{\prod_{i=1}^{n} \Psi_{g,i,T}(z) \right\}
        +\sum_{j=1}^{n-1}\sum_{k=j+1}^{n}\left\{\prod_{\substack{i=1 \\ i\ne j,k}}^{n} \Psi_{g,i,T}(z)\right\}\left(\frac{-8\iota z^3 \mathbb{E}\left[ g_{j,T}(\psi,P_{\theta_0})g_{k,T}(\psi,P_{\theta_0})\gamma_{j,k,T}(\psi,P_{\theta_0})\right] }{n^3(n-1)\sigma_{n,T}^{3}}\right. \nn \\
        &+&\left. z^4 \sigma_{n,T}^{-4}\frac{8 \mathbb{E}\left[ \left\{g_{j,T}^2(\psi,P_{\theta_0})g_{k,T}(\psi,P_{\theta_0}) +g_{j,T}(\psi,P_{\theta_0})g_{k,T}^2(\psi,P_{\theta_0})\right\} \gamma_{j,k,T}(\psi,P_{\theta_0})\right]}{n^4(n-1)} \right. \nn \\
        &-& \left. \frac{2}{n^2(n-1)^2}z^2\sigma_{n,T}^{-2}\mathbb{E}\left[\gamma_{j,k,T}^2(\psi,P_{\theta_0})\right] \right) \nn \\   
      &+& \sum_{j=1}^{n-1}\sum_{k=j+1}^{n}\sum_{\substack{m=1 \\ m\ne j,k}}^{n}\left\{\prod_{\substack{i=1 \\ i\ne j,k,m}}^{n} \Psi_{g,i,T}(z)\right\}\frac{16z^4\mathbb{E}\left[g_{j,T}(\psi,P_{\theta_0})g_{k,T}(\psi,P_{\theta_0})\gamma_{j,m,T}(\psi,P_{\theta_0})\gamma_{k,m,T}(\psi,P_{\theta_0})\right] }{n^4(n-1)^2\sigma_{n,T}^{4} }\nn \\     
        &-&  \sum_{j=1}^{n-3}\sum_{k=j+1}^{n-2}\sum_{m=k+1}^{n-1}\sum_{l=m+1}^{n}\left\{\prod_{\substack{i=1 \\ i\ne j,k,m,l}}^{n} \Psi_{g,i,T}(z)\right\}z^6\sigma_{n,T}^{-6}\frac{64}{n^6(n-1)^2}  \nn \\
        &\times& \left\{\mathbb{E}\left[ g_{j,T}(\psi,P_{\theta_0})g_{k,T}(\psi,P_{\theta_0})\gamma_{j,k,T}(\psi,P_{\theta_0})\right]\mathbb{E}\left[ g_{m,T}(\psi,P_{\theta_0})g_{l,T}(\psi,P_{\theta_0})\gamma_{m,l,T}(\psi,P_{\theta_0})\right] \right. \nn \\
        &+& \left. \mathbb{E}\left[ g_{j,T}(\psi,P_{\theta_0})g_{m,T}(\psi,P_{\theta_0})\gamma_{j,m,T}(\psi,P_{\theta_0})\right]\mathbb{E}\left[ g_{k,T}(\psi,P_{\theta_0})g_{l,T}(\psi,P_{\theta_0})\gamma_{k,l,T}(\psi,P_{\theta_0})\right] \right. \nn \\
        &+& \left. \mathbb{E}\left[ g_{j,T}(\psi,P_{\theta_0})g_{l,T}(\psi,P_{\theta_0})\gamma_{j,l,T}(\psi,P_{\theta_0})\right]\mathbb{E}\left[ g_{k,T}(\psi,P_{\theta_0})g_{m,T}(\psi,P_{\theta_0})\gamma_{k,m,T}(\psi,P_{\theta_0})\right] \right\} \nn \\
        &+& O( \prod_{i=1}^{n} \Psi_{g,i,T}(z) \vert z\vert \mathcal{P}(\vert z \vert)n^{-1-\delta/2}+\vert n^{-1/2} z  \vert ^{2+\delta}+\vert n^{-1}z \vert), \label{Eq: CF_U_Exp}
   \end{eqnarray}
 \end{footnotesize} 
   where $\mathcal{P}$ is a fixed polynomial.
\item[Step 3.] 
We need to derive the expansions (up to a suitable order) of the products of $\Psi_{g,i,T}(z)$ represented as the four curly brackets in (\ref{Eq: CF_U_Exp}), to have similar expressions as the terms in $\Psi^*_{n,T}(z)$; see  (\ref{Eq: AppCF}). To achieve it, we introduce the approximate c.f. of $\sigma_{g}^{-1}\sum_{i=1}^ng_{i,T}(\psi,P_{\theta_0})$ in (\ref{Eq: Psii}) and find a connection to the c.f. of $\sigma_{n,T}^{-1}\sum_{i=1}^ng_{i,T}(\psi,P_{\theta_0})$ so that we can get the expressions of the four curly brackets in (\ref{Eq: CF_U_Exp}).

   Let 
   \begin{equation}  \label{Eq: CF_g}
       \Psi_{i,T}(z) =\mathbb{E}\left[\exp\left( \iota z\sigma_{g}^{-1}g_{i,T}(\psi,P_{\theta_0}) \right)\right]
   \end{equation}
   denote the c.f. of $\sigma_{g}^{-1}g_{i,T}(\psi,P_{\theta_0})$, where $\sigma_{g}^2$ is defined in (\ref{Eq: var_g}). For sufficient small $\varepsilon'>0$ and for $\vert z\vert \leq \varepsilon' n^{1/2}$, we get for the c.f. of $\sigma_{g}^{-1}\sum_{i=1}^{n}g_{i,T}(\psi,P_{\theta_0})$ 
   \begin{equation} \label{Eq: Psii}
       \prod_{i=1}^n\Psi_{i,T}(n^{-1/2}z) = e^{-z^2/2}\left[ 1-\frac{\iota \tilde{\kappa}_3}{6}n^{-1/2}z^3+\frac{\tilde{\kappa}_4}{24}n^{-1}z^4-\frac{\tilde{\kappa}_3^2}{72}n^{-1}z^6\right]+o(n^{-1}\vert z\vert e^{-z^2/4}),
    \end{equation}
   where $\tilde{\kappa}_3=n^{-1}\sigma_{g}^{-3}\sum_{i=1}^n \mathbb{E}\left[g_{i,T}^3(\psi,P_{\theta_0}) \right]$ and $\tilde{\kappa}_4=n^{-1}\sigma_{g}^{-4}\sum_{i=1}^n \mathbb{E}\left[g_{i,T}^4(\psi,P_{\theta_0}) \right]-3$.
   
   Since $\Psi_{g,i,T}(z)=\Psi_{i,T}(\sigma_g\sigma_{n,T}^{-1}\frac{2}{n}z)$, we can investigate the behaviour of the four curly brackets in (\ref{Eq: CF_U_Exp}), namely  {\footnotesize
   \begin{eqnarray} \label{Eq: gP}
       \prod^{n}_{i=1} \Psi_{g,i,T}(z) &=& \prod_{i=1}^{n} \Psi_{i,T}(n^{-1/2}z)+e^{-z^2/2}\left[\frac{1}{n(n-1)^2}\sigma^{-2}_{g}\sum_{u=1}^{n-1}\sum_{v=u+1}^{n}\mathbb{E}\left[\gamma_{u,v,T}^2(\psi,P_{\theta_0})\right]\right]z^2 \nn \\
       &+&o(n^{-1}\vert z\vert e^{-z^2/4}),
   \end{eqnarray}
   \begin{eqnarray}  \label{Eq: gP2}
       \prod^{n}_{\substack{i=1 \\ i\ne j,k}} \Psi_{g,i,T}(z)& =& \prod_{i=1}^{n} \Psi_{i,T}(n^{-1/2}z)\nn \\ &+&e^{-z^2/2}\left[\sum_{u=1}^{n-1}\sum_{v=u+1}^{n}\frac{\mathbb{E}\left[\gamma_{u,v,T}^2(\psi,P_{\theta_0})\right]}{n(n-1)^2\sigma^{2}_{g}}+\frac{\mathbb{E}\left[g_{j,T}^2(\psi,P_{\theta_0})\right]+\mathbb{E}\left[g_{k,T}^2(\psi,P_{\theta_0})\right]}{n\sigma_{g}^{2}}\right] z^2\nn \\ 
       &+& o(n^{-1}\vert z\vert e^{-z^2/4}),
   \end{eqnarray}
   \begin{eqnarray}  \label{Eq: gP3}
      && \prod^{n}_{\substack{i=1 \\ i\ne j,k,m}} \Psi_{g,i,T}(z)\nn \\&=& \prod_{i=1}^{n} \Psi_{i,T}(n^{-1/2}z)\nn \\ 
       &+&e^{-z^2/2}\left[\sum_{u=1}^{n-1}\sum_{v=u+1}^{n}\frac{\mathbb{E}\left[\gamma_{u,v,T}^2(\psi,P_{\theta_0})\right]}{n(n-1)^2\sigma^{2}_{g}} 
       +  \frac{\mathbb{E}\left[g_{j,T}^2(\psi,P_{\theta_0})\right]+\mathbb{E}\left[g_{k,T}^2(\psi,P_{\theta_0})\right]+\mathbb{E}\left[g_{m,T}^2(\psi,P_{\theta_0})\right]}{n\sigma_{g}^{2}}\right] z^2\nn \\ 
       &+& o(n^{-1}\vert z\vert e^{-z^2/4}),
   \end{eqnarray}
   \begin{eqnarray}    \label{Eq: gP4}
      \prod_{\substack{i=1 \\ i\ne j,k,m,l}}^{n} \Psi_{g,i,T}(z) 
       &=& \prod_{i=1}^{n} \Psi_{i,T}(n^{-1/2}z)\nn \\ 
       &+&e^{-z^2/2}\left[\sum_{u=1}^{n-1}\sum_{v=u+1}^{n}\frac{\mathbb{E}\left[\gamma^2_{u,v,T}(\psi,P_{\theta_0})\right]}{n(n-1)^2\sigma^{2}_{g}}\right. \nn\\
      & +&\left. \frac{\mathbb{E}\left[g_{j,T}^2(\psi,P_{\theta_0})\right]+\mathbb{E}\left[g_{k,T}^2(\psi,P_{\theta_0})\right]+\mathbb{E}\left[g_{m,T}^2(\psi,P_{\theta_0})\right]+\mathbb{E}\left[g_{l,T}^2(\psi,P_{\theta_0})\right]}{n\sigma_{g}^{2}}\right]z^2 \nn \\ 
       &+& o(n^{-1}\vert z\vert e^{-z^2/4}),
   \end{eqnarray}
   }
   for $\vert z\vert \leq \varepsilon' n^{1/2}$. 
\item[Step 4.] We combine the remainders and derive an expression for $\Psi_{n,T}(z)$ such that $\Psi^*_{n,T}(z)$ is the leading term and we characterize the order of the remainder. This yields (\ref{Eq: Psi_nt}).

Substitution of (\ref{Eq: Psii}), (\ref{Eq: gP}), (\ref{Eq: gP2}), (\ref{Eq: gP3}), (\ref{Eq: gP4}), (\ref{Eq: variance}), and (\ref{Eq: AppCF}) into (\ref{Eq: CF_U_Exp}) shows that for $\vert z\vert \leq \varepsilon' n^{1/2}$,
   \begin{equation}  \label{Eq: Psi_nt}
       \Psi_{n,T}(z) = \Psi_{n,T}^{*}(z)+o(n^{-1}\vert z\vert \mathcal{P}(\vert z \vert) e^{-z^2/4})+O(\vert n^{-1/2} z  \vert ^{2+\delta}),
   \end{equation}
   the same as (2.13) in \citet{BGVZ86}. 
\item[Step 5.] Moving along the lines of (2.13) in  \citet{BGVZ86}, we prove (\ref{InE: ES}).
\end{itemize}
\end{proof}

\subsection{Proof of Proposition 4
} \label{App: proof_sadd}

\begin{proof}

We derive (4.13) 
by the tilted-Edgeworth technique; see e.g. \citet{BNSC89} for a book-length presentation. Our proof follows from standard arguments, like e.g. those in \citet{F82}, \citet{ER86}, and \citet{GR96}. The main difference between the available proofs and ours is related to the fact that we need to use our approximate c.g.f., as obtained 
via the (approximate) cumulants in (\ref{Eq: Mean}), (\ref{Eq: variance}), (\ref{Eq: 3rdk}) and (\ref{Eq: 4rdk}). To this end, we set 
\begin{equation}
\tilde{\mathcal{K}}_{n,T}(\nu)= \mu_{n,T} \nu + \frac{1}{2} n  \sigma_{n,T}^{2} \nu^2 +  \frac{1}{6} n^2  \varkappa_{n,T}^{(3)} \sigma_{n,T}^{3} \nu^3 
  + \frac{1}{24} n^3  \varkappa_{n,T}^{(4)}  \sigma_{n,T}^{4} \nu^4,
\label{Eq: rtilde}
\end{equation}
where we use the cumulants
$\varkappa_{n,T}^{(3)} = n^{-1/2} \kappa_{n,T}^{(3)}$,  $\varkappa_{n,T}^{(4)} = n^{-1} \kappa_{n,T}^{(4)}$,
with $\varkappa_{n,T}^{(3)}$ and $\varkappa_{n,T}^{(4)}$ being of order $O(m^{-1})$, as derived in Lemma \ref{Lemma_cgf}. Then, following the argument of Remark 2 in \citet{ER86}, we obtain  the required result
$f_{n,T}(z) = 
p_{n,T}(z)\left[1 + O\left(m^{-1}\right)\right]$; see also \citet{F82}, p. 677. 
Finally, 
a straightforward application of Lugannani-Rice formula   yields (4.15); 
see 
\citet{LR80} and \citet{GR96}. 

\end{proof}


\section{The first and second derivatives of the log-likelihood} \label{Section: Appendix2}

\citet{LY10} have already provided a few calculations for the first-order asymptotics. To go further, our online materials give additional and more explicit mathematical expressions for the higher-order terms needed for the saddlepoint approximation.

\noindent We recall the following notations, which are frequently used:
\begin{eqnarray*}
S_n(\lambda) &=& I_n -\lambda W_n \\
\Rn &=& I_n -\rho M_n \\
G_n(\lambda) &=& W_n S_n^{-1} \\
\Hn &=& M_n R_n^{-1} \\
\ddot{W}_n &=& R_n W_n R_n^{-1}, \\
\ddot{G}_n(\lambda_0) &=& \ddot{W}_n(I_n-\lambda_0 \ddot{W}_n)^{-1}= R_nG_nR_n^{-1}, \\
\ddot{X}_{nt} &=& R_n \tXn  \\
H_n^s &=& H_n^{'}+H_n,\\ 
G_n^s &=& G_n^{'}+G_n \\
\hbar\nt(\zeta) &=& \frac{1}{m} \sum_{t=1}^{T} \left( \ddot{X}\nt, \ddot{G}_n(\lambda_0) \ddot{X}\nt \beta_0\right)' \left( \ddot{X}\nt, \ddot
{G}_n(\lambda_0) \ddot{X}\nt \beta_0 \right)
\end{eqnarray*}



\subsection{The first derivative of the log-likelihood} 

\label{AppendixA} 

\subsubsection{Common terms}
 First, consider the following elements which are common to many partial derivatives that we are going to compute. To this end, we set $\xi=(\beta',\lambda,\rho)'$ and we compute:
\begin{itemize}
\item the matrix
\begin{eqnarray}
\partial_{\lambda} S_n(\lambda)=-W_n \label{Eq. dSlambda}\\
\partial_{\rho} R_n(\rho)=-M_n \label{Eq. dRrho}
\end{eqnarray}
\item the vector 
\begin{equation*}
\partial_{\xi}\tVn= (\partial_{\beta'}\tVn,\partial_{\lambda}\tVn,\partial_{\rho}\tVn), 
\end{equation*}
where 
\begin{itemize}

\item[] 
\begin{eqnarray} 
\partial_{\beta'}\tVn &=& \partial_{\beta'} \left\{\SnY \right\} = -\Rn \tXn \label{Eq. dVbeta}
\end{eqnarray}

\item[] and 
\begin{eqnarray} 
\partial_{\lambda}\tVn &=& \partial_{\lambda} \left\{\Rn S_n(\lambda)\tYn \right\} = -\Rn W_n \tilde{Y}_{nt} \label{Eq. dVlambda}
\end{eqnarray}

\item[] and making use of (\ref{Eq. dRrho}), we have
\begin{eqnarray} 
\partial_{\rho}\tVn &=& \partial_{\rho} \left\{\SnY \right\} \notag \\  
&=&  -M_n[S_n(\lambda)\tilde{Y}_{nt}-\tilde{X}_{nt}\beta]\nn \\ &=&  -M_nR_n^{-1}(\rho)\Rn[S_n(\lambda)\tilde{Y}_{nt}-\tilde{X}_{nt}\beta] \nn \\ &=& -H_n(\rho)\tVn  \label{Eq. dVrho},
\end{eqnarray}

\end{itemize}

\item the vector
\begin{equation*}
 \partial_{\xi} G_n(\lambda) = (\partial_{\beta'}G_{n}(\lambda),\partial_{\lambda}G_n(\lambda),\partial_{\rho}G_n(\lambda)), 
\end{equation*}
where 
\begin{itemize}

\item[] 
\begin{eqnarray} 
\partial_{\beta'} G_n(\lambda) = 0 \label{Eq. dGnbeta}\\
\partial_{\rho} G_n(\lambda) =0      \label{Eq. dGnrho}
\end{eqnarray}

\item[] and 
\begin{eqnarray} 
\partial_{\lambda} G_n(\lambda) &=&  \partial_{\lambda}(W_nS_n^{-1})= W_n \partial_{\lambda}S_n^{-1} \nn \\&=& W_n (- S_n^{-1}\underbrace{\partial_{\lambda}(S_n)}_{\ref{Eq. dSlambda}}S_n^{-1} ) \nn \\
 &=& (W_nS_n^{-1})^2 = G_n^2 \label{Eq. dGnlambda}
\end{eqnarray}

\end{itemize}
\item the vector 
\begin{equation*}
 \partial_{\xi} H_n(\rho) = (\partial_{\beta'}H_{n}(\rho),\partial_{\lambda}H_{n}(\rho),\partial_{\rho}H_{n}(\rho)), 
\end{equation*}
where 
\begin{itemize}
\item[] 
\begin{eqnarray} 
\partial_{\beta'}H_n(\rho) = 0 \label{Eq. dHnbeta}\\
\partial_{\lambda}H_{n}(\rho) = 0 \label{Eq. dHnlambda}
\end{eqnarray}

\item[] and 
\begin{eqnarray} 
\partial_{\rho}H_{n}(\rho) &=& \partial_{\rho}(M_nR_n^{-1})= M_n \partial_{\rho}R_n^{-1} \nn \\&=& M_n (- R_n^{-1}\underbrace{\partial_{\rho}(R_n)}_{\ref{Eq. dRrho}}R_n^{-1} ) \nn \\
&=&(M_n R_n^{-1}(\rho))^2=H_n^2     \label{Eq. dHnrho},
\end{eqnarray}

\end{itemize}
 
\item the vector
\begin{equation*}
\partial_{\xi}\qGn = (\partial_{\beta'}\qGn,\partial_{\lambda}\qGn,\partial_{\rho}\qGn), 
\end{equation*}
where 
\begin{itemize}
\item[] 
\begin{eqnarray} 
\partial_{\lambda}\qGn &=& \partial_{\lambda}\{ G_n(\lambda)G_n(\lambda) \} = \underbrace{\partial_{\lambda} G_n(\lambda)}_{\ref{Eq. dGnlambda}} G_n(\lambda) + G_n(\lambda) \underbrace{\partial_{\lambda} G_n(\lambda)}_{\ref{Eq. dGnlambda}}  \nn \\ 
&= & \qGn \Gn + \Gn \qGn \notag\\ &=&2G_n^3(\lambda)\label{Eq. dqGnlambda}
\end{eqnarray}

\item[] $\partial_{\beta'}\qGn =0 \ \text{and}  \  \partial_{\rho}\qGn = 0$

\end{itemize}
\item
\begin{equation*}
\partial_{\xi} H^2_n(\rho) = (\partial_{\beta'}H^2_{n}(\rho),\partial_{\lambda}H^2_{n}(\rho),\partial_{\rho}H^2_{n}(\rho)), 
\end{equation*}
 where 
\begin{itemize}
\item[] $\partial_{\beta'} H^2_n(\rho) = 0, \quad \partial_{\lambda} H^2_n(\rho) = 0$, 
\item[] 
\begin{eqnarray} 
\partial_{\rho} H^2_n(\rho) &=& \partial_{\rho} \{H_n(\rho)H_n(\rho)\}= \underbrace{\partial_{\rho} H_n(\rho) }_{\ref{Eq. dHnrho}}H_n(\rho) + H_n(\rho) \underbrace{\partial_{\rho} H_n(\rho)}_{\ref{Eq. dHnrho}}  \nn \\
&=&  H_n(\rho) ^2 H_n(\rho) + H_n(\rho)  H_n(\rho)^2 \nn 
 \\& =&2H_n^3(\rho) \label{Eq. dqHnrho}
\end{eqnarray}

\end{itemize}

\end{itemize}

\subsubsection{Component-wise calculation of  the log-likelihood} 
 \begin{equation*} 
 \frac{\partial\ell_{n,T}(\theta)}{\partial {\theta} }
 = \{ \partial_{\beta'}\ell_{n,T}(\theta),\partial_{\lambda}\ell_{n,T}(\theta),\partial_{\rho}\ell_{n,T}(\theta),\partial_{\sigma^2}\ell_{n,T}(\theta)   \}
\end{equation*}
\begin{itemize}
\item \begin{eqnarray}
\partial_{\beta'}\ell_{n,T}(\theta) &=& \partial_{\beta'}\{-\frac{1}{2\sigma^2} \sum_{t=1}^{T} \rVn\tVn \} \nn \\
&=& -\frac{1}{2\sigma^2} \sum_{t=1}^{T}(\underbrace{\partial_{\beta'}\tVn}_{\ref{Eq. dVbeta}})'\tVn -\frac{1}{2\sigma^2} \sum_{t=1}^{T}\left( \rVn \underbrace{\partial_{\beta'}\tVn}_{\ref{Eq. dVbeta}}\right)^{'} \nn \\ &=& \frac{1}{2\sigma^2} \sum_{t=1}^{T}(R_n(\rho)\tXn)'\tVn + \frac{1}{2\sigma^2}\sum_{t=1}^{T}\left(\rVn R_n(\rho)\tXn \right)^{'}\nn \\
&=& \frac{1}{\sigma^2} \sum_{t=1}^{T}(R_n(\rho)\tXn)'\tVn
\end{eqnarray}

\item \begin{eqnarray}
\partial_{\lambda}\ell_{n,T} (\theta)&=& \partial_{\lambda}\{(T-1)\ln|S_n(\lambda)|-\frac{1}{2\sigma^2} \sum_{t=1}^{T} \rVn\tVn \} \nn \\ &=& (T-1)\tr(S_n^{-1}(\lambda)\underbrace{\partial_{\lambda}S_n(\lambda)}_{\ref{Eq. dSlambda}}) \nn \\
&-& \frac{1}{2\sigma^2}\sum_{t=1}^{T}\{(\underbrace{\partial_{\lambda}\tVn}_{\ref{Eq. dVlambda}})'\tVn 
+ \rVn \underbrace{\partial_{\lambda}\tVn}_{\ref{Eq. dVlambda}}\} \nn \\
&= & -(T-1)\tr(S_n^{-1}(\lambda)W_n)\nn \\
&+& \frac{1}{2\sigma^2}\sum_{t=1}^{T}\{ (\Rn W_n \tYn)'\tVn + \rVn \Rn W_n \tYn  \} \nn \\
&=& -(T-1)\tr(G_n(\lambda))+ \frac{1}{\sigma^2}\sum_{t=1}^{T} (\Rn W_n \tYn)'\tVn
\end{eqnarray}

\item \begin{eqnarray}
\partial_{\rho}\ell_{n,T}(\theta) &=& \partial_{\rho}\{(T-1)\ln|\Rn|-\frac{1}{2\sigma^2} \sum_{t=1}^{T} \rVn\tVn \} \nn \\ &=& (T-1)\tr(R_n^{-1}(\rho)\underbrace{\partial_{\rho}\Rn}_{\ref{Eq. dRrho}})\nn \\
&-& \frac{1}{2\sigma^2}\sum_{t=1}^{T}\{(\underbrace{\partial_{\rho}\tVn}_{\ref{Eq. dVrho}})'\tVn + \rVn \underbrace{\partial_{\rho}\tVn}_{\ref{Eq. dVrho}}\} \nn \\ &=& -(T-1)\tr(R_n^{-1}(\rho)M_n)\nn \\
& +& \frac{1}{2\sigma^2}\sum_{t=1}^{T}\{ (H_n(\rho) \tVn)'\tVn + \rVn  H_n(\rho) \tVn \} \nn \\ &=& -(T-1)\tr(H_n(\rho))+\frac{1}{\sigma^2}\sum_{t=1}^{T}(H_n(\rho) \tVn)'\tVn
\end{eqnarray}

\item \begin{eqnarray}
\partial_{\sigma^2}\ell_{n,T} (\theta) &=& \partial_{\sigma^2}\{ -\frac{n(T-1)}{2}\ln(2\pi\sigma^2)- \frac{1}{2\sigma^2} \sum_{t=1}^{T} \rVn\tVn \} \nn \\ 
&=& -\frac{n(T-1)}{2\sigma^2}+ \frac{1}{2\sigma^4}\sum_{t=1}^{T} \rVn\tVn 
\end{eqnarray}
\end{itemize}
\begin{equation} 
 \frac{\partial \ell_{n,T}(\theta)}{\partial \theta} = \left( \begin{array}{cc} 
     \frac{1}{\sigma^2} \sum_{t=1}^{T}(R_n(\rho)\tXn)'\tVn \\ 
     -(T-1)\tr(G_n(\lambda))+ \frac{1}{\sigma^2}\sum_{t=1}^{T} (\Rn W_n \tYn)'\tVn \\
     -(T-1)\tr(H_n(\rho))+\frac{1}{\sigma^2}\sum_{t=1}^{T}(H_n(\rho) \tVn)'\tVn \\ 
     -\frac{n(T-1)}{2\sigma^2}+ \frac{1}{2\sigma^4}\sum_{t=1}^{T} \rVn\tVn 
      \end{array} \right)
\end{equation}
\begin{equation*}
\frac{\partial\ell_{n,T}(\theta)}{\partial {\theta} } = \frac{1}{(T-1)} \sum_{t=1}^{T}  \psi(({Y}\nt, {X}\nt), \theta) = 0.
\end{equation*}
where $\psi(({Y}\nt, {X}\nt), \theta_{n,T})$ represents the likelihood score function and its expression is 
\begin{equation} \label{Eq: score}
\psi({Y}\nt, {X}\nt, \theta) = \left( \begin{array}{cc} 
     \frac{(T-1)}{\sigma^2}  (\Rnr \tilde{X}\nt)' \tilde{V}_{nt}(\zeta) \\ 
     \frac{(T-1)}{\sigma^2}  (\Rnr W_n  \tilde{Y}\nt)' \tilde{V}_{nt}(\zeta) -  \frac{(T-1)^2}{T}\tr(G_n(\lambda)) \\
     \frac{(T-1)}{\sigma^2}   (\Hn  \tilde{V}\nt(\zeta))' \tilde{V}_{nt}(\zeta) - \frac{ (T-1)^2}{T}\tr(H_n(\rho)) \\ 
     \frac{(T-1)}{2\sigma^4}   \left(\tilde{V}'\nt(\zeta)\tilde{V}\nt(\zeta)  -  \frac{n(T-1)}{T} \sigma^2 \right)
      \end{array} \right)
\end{equation}

\subsection{The second derivative  the log-likelihood} 

\label{AppendixB}
\begin{itemize}
\item The first row of $\dfrac{\partial^2 \ell_{n,T}(\theta_0)}{\partial \theta \partial \theta'} $ is $\dfrac{\partial \{\frac{1}{\sigma^2} \sum_{t=1}^{T}(R_n(\rho)\tXn)'\tVn\} }{\partial {\theta'}} $
\begin{eqnarray}
\partial_{\beta'} ( \frac{1}{\sigma^2}\sum_{t=1}^{T}(R_n(\rho)\tXn)'\tVn) &=&\frac{1}{\sigma^2} \sum_{t=1}^{T}(R_n(\rho)\tXn)'\underbrace{\partial_\beta' \tVn}_{\ref{Eq. dVbeta}} \nn \\
&=& -\frac{1}{\sigma^2} \sum_{t=1}^{T}(R_n(\rho)\tXn)'\Rn \tXn
\end{eqnarray}

\begin{eqnarray}
\partial_\lambda ( \frac{1}{\sigma^2}\sum_{t=1}^{T}(R_n(\rho)\tXn)'\tVn) &=&\frac{1}{\sigma^2} \sum_{t=1}^{T}(R_n(\rho)\tXn)'\underbrace{\partial_\lambda \tVn}_{\ref{Eq. dVlambda}} \nn \\
&=& -\frac{1}{\sigma^2} \sum_{t=1}^{T}(R_n(\rho)\tXn)'\Rn W_n \tYn 
\end{eqnarray}
\begin{eqnarray}
\partial_\rho ( \frac{1}{\sigma^2}\sum_{t=1}^{T}(R_n(\rho)\tXn)'\tVn) &=&\frac{1}{\sigma^2} \sum_{t=1}^{T}\{ (\underbrace{\partial _\rho  R_n(\rho)}_{\ref{Eq. dRrho}}\tXn)'\tVn+(R_n(\rho)\tXn)'\underbrace{\partial_\rho \tVn}_{\ref{Eq. dVrho}} \}\nn \\
&=& -\frac{1}{\sigma^2} \sum_{t=1}^{T}\{(M_n\tXn)'\tVn+ (\Rn \tXn)' H_n(\rho)\tVn \} \nn \\ 
\end{eqnarray}

\begin{eqnarray}
\partial_{\sigma^2} ( \frac{1}{\sigma^2}\sum_{t=1}^{T}(R_n(\rho)\tXn)'\tVn) &=&-\frac{1}{\sigma^4} \sum_{t=1}^{T}(R_n(\rho)\tXn)'\tVn
\end{eqnarray}

\item The second row of $\dfrac{\partial^2 \ell_{n,T}(\theta_0)}{\partial \theta \partial \theta'} $ is $\dfrac{\partial \{-(T-1)\tr(G_n(\lambda))+ \frac{1}{\sigma^2}\sum_{t=1}^{T} (\Rn W_n \tYn)'\tVn\}}{\partial \theta'}$. The matrix $\dfrac{\partial^2 \ell_{n,T}(\theta_0)}{\partial \theta \partial \theta'} $ is symmetric. The first element is the transpose of the second one in the first row. So 
\begin{eqnarray*}
\partial_{\beta'} ( -(T-1)\tr(G_n(\lambda))+ \frac{1}{\sigma^2}\sum_{t=1}^{T} (\Rn W_n \tYn)'\tVn) = -\frac{1}{\sigma^2} \sum_{t=1}^{T}(\Rn W_n \tYn)' R_n(\rho)\tXn 
\end{eqnarray*}
\begin{eqnarray}
\partial_\lambda ( -(T-1)\tr(G_n(\lambda))&+& \frac{1}{\sigma^2}\sum_{t=1}^{T} (\Rn W_n \tYn)'\tVn) \nn \\ &=&-(T-1) \tr(\underbrace{ \partial_\lambda G_n(\lambda)}_{\ref{Eq. dGnlambda}}) +\frac{1}{\sigma^2} \sum_{t=1}^{T}(\Rn W_n \tYn)'\underbrace{\partial_\lambda \tVn}_{\ref{Eq. dVlambda}} \nn \\
&=& -(T-1)\tr(G_n^2(\lambda))-\frac{1}{\sigma^2} \sum_{t=1}^{T}(\Rn W_n \tYn)'\Rn W_n \tYn \nn \\
\end{eqnarray}
\begin{eqnarray}
\partial_\rho ( -(T-1)\tr(G_n(\lambda))&+& \frac{1}{\sigma^2}\sum_{t=1}^{T} (\Rn W_n \tYn)'\tVn) \nn \\
&=&\frac{1}{\sigma^2} \sum_{t=1}^{T}\{ (\underbrace{\partial _\rho  R_n(\rho)}_{\ref{Eq. dRrho}}W_n \tYn)'\tVn+(R_n(\rho)W_n \tYn)'\underbrace{\partial_\rho \tVn}_{\ref{Eq. dVrho}} \}\nn \\
&=& -\frac{1}{\sigma^2} \sum_{t=1}^{T}\{(M_n W_n \tYn)'\tVn+ (\Rn W_n \tYn)' H_n(\rho)\tVn \} \nn \\ 
\end{eqnarray}

\begin{eqnarray}
\partial_{\sigma^2} ( -(T-1)\tr(G_n(\lambda))&+& \frac{1}{\sigma^2}\sum_{t=1}^{T} (\Rn W_n \tYn)'\tVn) \nn \\ &=&-\frac{1}{\sigma^4} \sum_{t=1}^{T}(\Rn W_n \tYn)'\tVn
\end{eqnarray}

\item The third row of $\dfrac{\partial^2 \ell_{n,T}(\theta_0)}{\partial \theta \partial \theta'} $ is 
$$\dfrac{\partial\{ -(T-1)\tr(H_n(\rho))+\frac{1}{\sigma^2}\sum_{t=1}^{T}(H_n(\rho) \tVn)'\tVn \} }{\partial {\theta'}}.$$ 
We could get the first two elements from the transpose of the third ones in the first two rows. So we only need to calculate the following two derivatives.
\begin{eqnarray}
&& \partial_\rho (-(T-1)\tr(H_n(\rho))+\frac{1}{\sigma^2}\sum_{t=1}^{T}(H_n(\rho) \tVn)'\tVn ) \nn \\
&&=-(T-1) \tr(\underbrace{ \partial_\rho H_n(\rho)}_{\ref{Eq. dHnrho}}) +\frac{1}{\sigma^2} \sum_{t=1}^{T}\{ (\underbrace{\partial _\rho  H_n(\rho)}_{\ref{Eq. dHnrho}}\tVn)'\tVn \nn \\
&&+(H_n(\rho) \underbrace{\partial_\rho \tVn}_{\ref{Eq. dVrho}})'\tVn+(H_n(\rho) \tVn)'\underbrace{\partial_\rho \tVn}_{\ref{Eq. dVrho}} \}\nn \\
&&=-(T-1)\tr(H_n^2(\rho)) +\frac{1}{\sigma^2} \sum_{t=1}^{T}\{(H_n^2(\rho)\tVn)'\tVn \nn \\
&&-(H_n^2(\rho)\tVn)'\tVn- (H_n(\rho) \tVn)' H_n(\rho)\tVn \} \nn \\ 
&& = -(T-1)\tr(H_n^2(\rho))-\frac{1}{\sigma^2}\sum_{t=1}^{T}(H_n(\rho) \tVn)' H_n(\rho)\tVn 
\end{eqnarray}

\begin{eqnarray}
\partial_{\sigma^2} ( -(T-1)\tr(H_n(\rho))&+&\frac{1}{\sigma^2}\sum_{t=1}^{T}(H_n(\rho) \tVn)'\tVn ) \nn \\ &=&-\frac{1}{\sigma^4} \sum_{t=1}^{T}(H_n(\rho) \tVn)'\tVn
\end{eqnarray}

\item The fourth row of $\dfrac{\partial^2 \ell_{n,T}(\theta_0)}{\partial \theta \partial \theta'} $ is $\dfrac{\partial\{  -\frac{n(T-1)}{2\sigma^2}+ \frac{1}{2\sigma^4}\sum_{t=1}^{T} \rVn\tVn  \} }{\partial {\theta'}} $. We only need to calculate the derivative in respect with $\sigma^2$.

\begin{eqnarray}
\partial_{\sigma^2} (  -\frac{n(T-1)}{2\sigma^2} + \frac{1}{2\sigma^4}\sum_{t=1}^{T} \rVn\tVn )  &=&\frac{n(T-1)}{2\sigma^4}-\frac{1}{\sigma^6} \sum_{t=1}^{T}\rVn\tVn \nn \\
\end{eqnarray}

\end{itemize}

We report the matrix

$$-\frac{\partial^2 \ell_{n,T}(\theta)}{\partial \theta \partial \theta'}=$$
\begingroup
    \fontsize{7pt}{7pt}\selectfont
\begin{eqnarray*}
\left[\begin{array}{cccccc}
\frac{1}{\sigma^{2}} \sum_t (\Rn \tXn)'(\Rn \tXn)  &  * &   * &  * & \\ 
\frac{1}{\sigma^{2}} \sum_t (\Rn W_n \tYn)'(\Rn \tXn) & \frac{1}{\sigma^{2}}\sum_t (\Rn W_n \tYn)'(\Rn W_n \tYn) + (T-1) \text{tr}(\qGn) &   * &  * & \\ 
\frac{1}{\sigma^{2}}\sum_t \{(\Hn \tVn)'(\Rn \tXn) + \tVn' M_n \tXn  \}& \frac{1}{\sigma^{2}}\sum_t \{(\Rn W_n \tYn)'(\Hn \tVn ) +  
(M_n W_n \tYn)' \tVn\} & 0 & 0 \\
\frac{1}{\sigma^{4}} \sum_t \tVn'\Rn\tXn &  \frac{1}{\sigma^{4}} \sum_t (\Rn W_n \tYn)' \tVn & 0&0
\end{array}\right]    \nn \\
\end{eqnarray*}
\endgroup 

\begingroup
    \fontsize{8pt}{8pt}\selectfont
\begin{eqnarray}
+ \left[\begin{array}{lllccc}
0_{k \times k} & 0_{k \times 1} & 0_{k \times 1} & 0_{k \times 1} \\
0_{1 \times k} & 0 & 0 & 0 \\
0_{1 \times k} & 0 & \sigma^{-2} \sum_t (\Hn \tVn)'\Hn\tVn + (T-1)\text{tr}\{ \Hn^2 \} & * \\
0_{1 \times k} & 0 & \sigma^{-4} \sum_t (\Hn \tVn)'\tVn & -\frac{m}{2\sigma^{4}} + \sigma^{-6} \sum_t \tilde{V}_{nt}'(\xi)\tVn  \\
\end{array}\right] & .\nn \\
\label{Eq. secDerln}
\end{eqnarray}
\endgroup

\subsection{The third derivative of the log-likelihood} 

\label{AppendixC}

Assuming, for the third derivative w.r.t . $\theta$ of $\ell_{n,T}(\theta)$, that derivation and integration can be exchanged (namely the dominated 
convergence theorem holds, 
component-wise for in $\theta$, for the third derivative of the log-likelihood), we 
derive $\frac{\partial^2 \ell_{n,T}(\theta)}{\partial \theta \partial \theta'}$ to compute the term $\Gamma(i,j,T,\theta_0)$, for each $i$ and $j
$. To this end, we have:
\begin{itemize}
\item \begin{eqnarray*} 
&&\frac{\partial \left\{ \sigma^{-2} \sum_t (\Rn \tXn)'(\Rn \tXn)\right\}}{\partial\theta} = \\ &&\left[\begin{array}{ccccc}
A^{(1,1,\beta')}(\rho, \sigma^2)  & A^{(1,1,\lambda)}(\rho, \sigma^2) &
A^{(1,1,\rho)}(\rho, \sigma^2)    & A^{(1,1,\sigma^2)}(\rho, \sigma^2) 
\end{array}\right] 
\end{eqnarray*}
where 
\beq
A^{(1,1,\beta')}(\rho, \sigma^2) = 0_{k\times k}, \quad A^{(1,1,\lambda)}(\rho, \sigma^2) = 0,
\eeq
\beq
A^{(1,1,\rho)} (\rho, \sigma^2) =  -2 \sigma^{-2} \sum_{t=1}^{T} \left\{ \tXn' M'_n   \tXn - \rho  \tXn' M'_n M_n  \tXn  \right\}=  -2 \sigma^{-2} \sum_{t=1}^{T}  (M_n   \tXn)'\Rn\tXn   ,
\eeq
and
\beq
A^{(1,1,\sigma^2)} (\rho, \sigma^2)= -\sigma^{-4} \sum_{t=1}^{T} \left\{ (\Rn \tXn)'  (\Rn \tXn)      \right\}.
\eeq
\item 
\begin{eqnarray*} 
&&\frac{\partial \left\{  \sigma^{-2} \sum_t (\Rn W_n \tilde{Y}_{nt})'(\Rn \tXn)\right\}}{\partial\theta} = \\ &&\left[\begin{array}{ccccc}
A^{(2,1,\beta')}(\rho, \sigma^2)  & A^{(2,1,\lambda)}(\rho, \sigma^2) &
A^{(2,1,\rho)} (\rho, \sigma^2)   & A^{(2,1,\sigma^2)}(\rho, \sigma^2) 
\end{array}\right] 
\end{eqnarray*}
where 
\beq
A^{(2,1,\beta')}(\rho, \sigma^2) = 0_{k\times k}, \quad A^{(2,1,\lambda)}(\rho, \sigma^2) = 0,
\eeq
\beq
A^{(2,1,\rho)}(\rho, \sigma^2)  =  \sigma^{-2} \sum_{t=1}^{T} \left\{ (-M_n W_n \tYn)' \Rn \tXn -(\Rn W_n \tYn)'M_n\tXn\right\},
\eeq
and
\beq
A^{(2,1,\sigma^2)}(\rho, \sigma^2) = -\sigma^{-4} \sum_{t=1}^{T} \left\{ (\Rn W_n \tYn)'(\Rn \tXn)     \right\}.
\eeq

\item 
\begin{eqnarray*} 
&&\frac{\partial \left\{ \sigma^{-2} \sum_t (\Rn W_n \tYn)'(\Rn W_n \tYn) + (T-1) \text{tr}(\qGn)\right\} }{\partial\theta} = \\
&&\left[\begin{array}{cccc}
A^{(2,2,\beta')}(\lambda,\rho, \sigma^2)  & A^{(2,2,\lambda)} (\lambda,\rho, \sigma^2) & A^{(2,2,\rho)}(\lambda,\rho, \sigma^2)   & A^{(2,2,\sigma^2)}(\lambda,\rho, \sigma^2) 
\end{array}\right] 
\end{eqnarray*}
where 
\beq
A^{(2,2,\beta')}(\lambda,\rho, \sigma^2) = 0_{k\times k}
\eeq
\beq
A^{(2,2,\lambda)} (\lambda,\rho, \sigma^2) = (T-1) \text{tr} \left\{  \partial_\lambda(\qGn) \right\}= (T-1) \text{tr} \left\{ 2G_n^3(\lambda) \right\},
\eeq
where we make use of  (\ref{Eq. dqGnlambda}), moreover, using (\ref{Eq. dRrho})
\bea
A^{(2,2,\rho)}(\lambda,\rho, \sigma^2) &=& \frac{1}{\sigma^{2}} \sum_{t=1}^{T} \{ (\partial_\rho \Rn W_n \tilde{Y}_{nt})'(\Rn W_n \tYn) \nn \\ &+& (\Rn W_n \tilde{Y}_{nt})'(\partial_\rho \Rn W_n \tYn) \} \nn \\
&=&-\frac{1}{\sigma^{2}} \sum_{t=1}^{T} \{ (M_n W_n \tilde{Y}_{nt})'(\Rn W_n \tYn) \nn \\ &+& (\Rn W_n \tilde{Y}_{nt})'(M_n W_n \tYn)  \} \nn \\
&=& -\frac{2}{\sigma^{2}} \sum_{t=1}^{T}(M_n W_n \tilde{Y}_{nt})'(\Rn W_n \tYn)
\eea
and
\beq
A^{(2,2,\sigma^2)}(\lambda,\rho, \sigma^2)  =   -\frac{1}{\sigma^{4}} \sum_{t=1}^{T} (\Rn M_n \tYn)' \Rn W_n \tYn  ,
\eeq
\item
\begin{eqnarray*} 
\frac{\partial \left\{ \sigma^{-2} \sum_{t=1}^{T} \left( (\Hn \tVn)'(\Rn \tXn) + \sigma^{-2} \sum_{t=1}^{T} \tilde{V}_{nt}'(\xi) M_n \tXn \right) \right\} }{\partial\theta} = \\ 
\left[\begin{array}{cccc}
A^{(3,1,\beta')}(\lambda,\rho, \sigma^2)  & A^{(3,1,\lambda)} (\lambda,\rho, \sigma^2) &
A^{(3,1,\rho)}(\beta',\lambda,\rho, \sigma^2)   & A^{(3,1,\sigma^2)}(\beta',\lambda,\rho, \sigma^2) 
\end{array}\right] 
\end{eqnarray*}
where 
\begin{eqnarray}
A^{(3,1,\beta')}(\lambda,\rho, \sigma^2) &=&  \sigma^{-2} \sum_t \left\{(\Hn \underbrace{ \partial_{\beta^{'}}\tVn}_{\text{\ \ see \ \ } (\ref{Eq. dVbeta})})'(\Rn \tXn) + \underbrace{ \partial_{\beta^{'}}\tilde{V}_{nt}'(\xi)}_{\text{\ \ see \ \ } (\ref{Eq. dVbeta})}    M_n \tXn \right\}  \nn \\
&=& -\frac{1}{\sigma^2} \sum_{t=1}^{T}\left\{(H_n(\rho)\Rn \tXn )'\Rn \tXn+ (\Rn \tXn)'M_n\tXn\right\} \nn \\
\end{eqnarray}

\begin{eqnarray}
A^{(3,1,\lambda)}(\lambda,\rho, \sigma^2) &=& \frac{1}{\sigma^{2}} \sum_{t=1}^{T} \left\{\left( \Hn \underbrace{\partial_\lambda \tVn}_{\text{\ \ see \ \ }( \ref{Eq. dVlambda})} \right)' \Rn \tXn + \left(\underbrace{\partial_\lambda \tVn}_{\text{\ \ see \ \ }( \ref{Eq. dVlambda})}\right)' M_n \tXn \right\} \nn \\
&=&-\frac{1}{\sigma^{2}} \sum_{t=1}^{T} \left\{ (\Hn \Rn W_n\tYn )'\Rn \tXn+ (\Rn W_n\tYn )' M_n \tXn \right\} \nn \\
\end{eqnarray}

where $\partial_\lambda \tVn=-\Rn W_n \tilde{Y}_{nt}$ as in (\ref{Eq. dVlambda}), and
\bea
&&A^{(3,1,\rho)}(\beta',\lambda,\rho, \sigma^2)\nn \\ 
&&= \frac{1}{\sigma^{2}} \sum_{t=1}^{T}
\left\{\left( \underbrace{\partial_\rho \Hn}_{\text{\ \ see \ \ } (\ref{Eq. dHnrho})}  \tVn \right)' \Rn \tXn   
+  \left(\Hn \underbrace{\partial_\rho \tVn}_{\text{\ \ see \ \ } (\ref{Eq. dVrho})}  \right)' \Rn \tXn\right\} \nn  \\ 
&&+\frac{1}{\sigma^{2}}\sum_{t=1}^{T}  \left\{\left( \Hn  \tVn \right)' \underbrace{\partial_\rho\Rn}_{\text{\ \ see \ \ } (\ref{Eq. dRrho})} \tXn   
+\underbrace{ \partial_\rho \tilde{V}_{nt}'(\xi)}_{\text{\ \ see \ \ } (\ref{Eq. dVrho})} M_n \tXn \right\}\nn \\
&&= \frac{1}{\sigma^{2}} \sum_{t=1}^{T} \left(\Hn^2 \tVn \right)'\Rn \tXn - \frac{1}{\sigma^{2}} \sum_{t=1}^{T} \left(\Hn^2 \tVn \right)'\Rn \tXn \nn \\
&&-\frac{1}{\sigma^{2}} \sum_{t=1}^{T} \left(\Hn\tVn \right)'M_n \tXn - \frac{1}{\sigma^{2}} \sum_{t=1}^{T} \left(\Hn \tVn \right)'M_n \tXn \nn \\ 
&&=- \frac{2}{\sigma^{2}} \sum_{t=1}^{T} \left(\Hn \tVn \right)'M_n \tXn 
\eea
\bea
A^{(3,1,\sigma^2)}(\beta',\lambda,\rho, \sigma^2) = -\frac{1}{\sigma^{4}}\sum_{t=1}^{T} \left\{   (\Hn \tVn)'(\Rn \tXn)+ \tilde{V}_{nt}'(\xi) M_n \tXn \right\} \nn \\
\eea
\item 
\begin{eqnarray*} 
\frac{\partial \left\{ \sigma^{-2} \sum_{t=1}^{T} (\Rn W_n \tYn)'(\Hn \tVn ) + \sigma^{-2} \sum_{t=1}^{T}  
(M_n W_n \tYn)' \tVn     \right\} }{\partial\theta} = \\
\left[\begin{array}{ccccc}
A^{(3,2,\beta')}(\lambda,\rho, \sigma^2)  & A^{(3,2,\lambda)} (\lambda,\rho, \sigma^2) &
A^{(3,2,\rho)}(\beta', \lambda,\rho, \sigma^2)   & A^{(3,2,\sigma^2)}(\beta', \lambda,\rho, \sigma^2) 
\end{array}\right] 
\end{eqnarray*}
where 
\begin{eqnarray}
A^{(3,2,\beta')}(\lambda,\rho, \sigma^2) &=& \frac{1}{\sigma^{2}} \sum_{t=1}^{T} \left\{\left( (\Rn W_n \tYn)'(\Hn  \underbrace{\partial_{\beta^{'}}\tVn }_{\text
{\ \ see \ \  (\ref{Eq. dVbeta})}} ) \right)^{'}+  \left(
(M_n W_n \tYn)' \underbrace{\partial_{\beta^{'}}\tVn }_{\text{\ \ see \ \ } (\ref{Eq. dVbeta})}  \right)^{'}   \right\} \nn \\ &=& - \frac{1}{\sigma^{2}} \sum_{t=1}^{T} \left\{(\Hn \Rn \tXn)' \Rn W_n \tYn+(\Rn \tXn)' M_n W_n \tYn  \right\} \nn \\ 
\end{eqnarray}

\begin{eqnarray}
A^{(3,2,\lambda)}(\lambda,\rho, \sigma^2) &=&  \frac{1}{\sigma^{2}}  \sum_{t=1}^{T} \left\{\left( \Rn W_m \tYn \right)' \Hn \underbrace{\partial_\lambda 
\tVn}_{\text{\ \ see \ \ } (\ref{Eq. dVlambda})}+ \left (M_n W_n \tYn \right)' \underbrace{\partial_\lambda \tVn}_{\text{\ \ see \ \ } (\ref{Eq. 
dVlambda})} \right\} \nn \\
&=& -\frac{1}{\sigma^{2}}  \sum_{t=1}^{T}  \left( \Rn W_m \tYn \right)' \Hn \Rn W_n \tYn 
\nn \\ & -& \frac{1}{\sigma^{2}}\sum_{t=1}^{T} \left (M_n W_n \tYn \right)'\Rn W_n \tYn  \nn \\
&=& -\frac{2}{\sigma^{2}} \sum_{t=1}^{T}(M_n W_n \tilde{Y}_{nt})'(\Rn W_n \tYn)
\end{eqnarray}

and
\bea
&&A^{(3,2,\rho)}(\beta',\lambda,\rho, \sigma^2) \nn \\ 
&&= \frac{1}{\sigma^{2}}\sum_{t=1}^{T} \left\{
\left( \underbrace{\partial_\rho \Rn}_{\text{\ \ see \ \ } (\ref{Eq. dRrho})}  W_n \tYn \right)' \Hn \tVn  + 
\left(\Rn W_n \tYn  \right)' \underbrace{\partial_\rho \Hn}_{\text{\ \ see \ \ } (\ref{Eq. dHnrho})} \tVn \right\} \nn  \\ 
&&+  \frac{1}{\sigma^{2}} \sum_{t=1}^{T} \left\{ \left(\Rn W_n \tYn  \right)' \Hn \underbrace{ \partial_\rho \tVn}_{\text{\ \ see \ \ } (\ref{Eq. 
dVrho})}  + \left(M_n W_n \tYn \right)' \underbrace{ \partial_\rho \tVn}_{\text{\ \ see \ \ } (\ref{Eq. dVrho})} \right\} \nn \\ && =- \frac{1}{\sigma^{2}} \sum_{t=1}^{T}\left\{\left(M_n W_n \tYn  \right)'\Hn \tVn - \left(\Rn W_n \tYn  \right)'\Hn^2 \tVn \right\} \nn \\
&& - \frac{1}{\sigma^{2}} \sum_{t=1}^{T}\left\{ \left(\Rn W_n \tYn  \right)'\Hn^2 \tVn  +\left(M_n W_n \tYn \right)'\Hn \tVn   \right\} \nn \\
&& = - \frac{2}{\sigma^{2}} \sum_{t=1}^{T}\left(M_n W_n \tYn \right)'\Hn \tVn 
\eea

\bea
A^{(3,2,\sigma^2)}(\beta',\lambda,\rho, \sigma^2) &=& -\frac{1}{\sigma^{4}} \sum_{t=1}^{T} \left\{(\Rn W_n \tYn)'(\Hn \tVn ) + 
(M_n W_n \tYn)' \tVn      \right\}. \nn \\
\eea
\item 
\begin{eqnarray*}
&&\frac{\partial \left\{\sigma^{-4} \sum_t \tVn'\Rn\tXn\right\}}{\partial\theta} = \\ &&\left[\begin{array}{ccccc}
A^{(4,1,\beta')}(\lambda,\rho, \sigma^2)  & A^{(4,1,\lambda)} (\lambda,\rho, \sigma^2) &
A^{(4,1,\rho)}(\beta',\lambda,\rho, \sigma^2)    & A^{(4,1,\sigma^2)}(\beta',\lambda,\rho, \sigma^2) 
\end{array}\right], 
\end{eqnarray*}
where 
\begin{eqnarray}
A^{(4,1,\beta')}(\lambda,\rho, \sigma^2)
&=&  \frac{1}{\sigma^{4}} \sum_{t=1}^{T} \left\{( \underbrace{\partial_{\beta^{'}}\tilde{V}_{nt}(\xi) }_{\text{\ \ see \ \  (\ref{Eq. dVbeta})}})'\Rn\tXn \right\} \nn \\
&=& -\frac{1}{\sigma^{4}} \sum_{t=1}^{T} (\Rn\tXn)'\Rn\tXn
\end{eqnarray}

\begin{eqnarray}
A^{(4,1,\lambda)}(\lambda,\rho, \sigma^2) &=& \frac{1}{\sigma^{4}}  \sum_{t=1}^{T} \left\{(\underbrace{\partial_\lambda \tilde{V}_{nt}(\xi)}_{\text{\ \ see \ \ } (\ref{Eq. dVlambda})})' \Rn\tXn  \right\} \nn \\
&=& -\frac{1}{\sigma^{4}}  \sum_{t=1}^{T} (\Rn W_n \tYn )'\Rn \tXn 
\end{eqnarray}

and
\bea
A^{(4,1,\rho)}(\beta',\lambda,\rho, \sigma^2) &=& \frac{1}{\sigma^{4}}  \sum_{t=1}^{T} \left\{ (\underbrace{\partial_\rho \tilde{V}_{nt}(\xi)}_{\text{\ \ see \ \ } (\ref{Eq. dVrho})})' \Rn\tXn+  \tilde{V}_{nt}'(\xi)\underbrace{\partial_{\rho}{\Rn}}_{\text{\ \ see \ \ } (\ref{Eq. dRrho})}\tXn\right\} \nn \\
&=& -\frac{1}{\sigma^{4}}  \sum_{t=1}^{T} \left\{ \left(\Hn \tVn\right)'\Rn\tXn+ \rVn M_n \tXn  \right\} \nn \\
\eea
\bea
A^{(4,1,\sigma^2)}(\beta',\lambda,\rho, \sigma^2) &=& -2\sigma^{-6} \sum_{t=1}^{T} \left\{ \tilde{V}_{nt}'(\xi) \Rn \tXn \right\},
\eea
\item 
\begin{eqnarray*}
&&\frac{\partial \left\{\sigma^{-4} \sum_{t=1}^{T} (\Rn W_n \tYn)' \tVn \right\}}{\partial\theta} = \\ &&\left[\begin{array}{ccccc}
A^{(4,2,\beta')}(\lambda,\rho, \sigma^2)  & A^{(4,2,\lambda)} (\lambda,\rho, \sigma^2) &
A^{(4,2,\rho)}(\beta',\lambda,\rho, \sigma^2)    & A^{(4,2,\sigma^2)}(\beta',\lambda,\rho, \sigma^2) 
\end{array}\right], 
\end{eqnarray*}
where 
\begin{eqnarray}
A^{(4,2,\beta')}(\lambda,\rho, \sigma^2) &=& \frac{1}{\sigma^{4}}  \sum_{t=1}^{T}\left((\Rn W_n \tYn)' \underbrace{\partial_{\beta'}\tVn }_{\text{\ \ see \ \ } 
(\ref{Eq. dVbeta})} \right)^{'}\nn \\
&=&-\frac{1}{\sigma^{4}}  \sum_{t=1}^{T}(\Rn \tXn)'   \Rn W_n \tYn
\end{eqnarray}

\begin{eqnarray}
A^{(4,2,\lambda)}(\lambda,\rho, \sigma^2) &=& \frac{1}{\sigma^{4}}  \sum_{t=1}^{T}\left\{\left( \Rn W_n \tYn \right)'  \underbrace{\partial_\lambda \tVn}_{\text{\ \ see \ \ } (\ref{Eq. dVlambda})} \right\} \nn \\
&=&-\frac{1}{\sigma^{4}}  \sum_{t=1}^{T} \left( \Rn W_n \tYn \right)' \Rn W_n \tYn 
\end{eqnarray}

and
\begin{eqnarray}
A^{(4,2,\rho)}(\beta',\lambda,\rho, \sigma^2) &=& \frac{1}{\sigma^{4}} \sum_{t=1}^{T} \left\{
\left( \underbrace{\partial_\rho \Rn}_{\text{\ \ see \ \ } (\ref{Eq. dRrho})}  W_n \tYn \right)'\tVn  + 
\left(\Rn W_n \tYn  \right)' \underbrace{\partial_\rho \tVn}_{\text{\ \ see \ \ } (\ref{Eq. dVrho})}  \right\} \nn  \\
&=& - \frac{1}{\sigma^{4}} \sum_{t=1}^{T}  \left\{\left(M_n W_n \tYn  \right)'\tVn + \left(\Rn W_n \tYn  \right)'  \Hn \tVn \right\} \nn \\
\end{eqnarray}

\bea
A^{(4,2,\sigma^2)}(\beta',\lambda,\rho, \sigma^2) &=& -2\sigma^{-6} \sum_{t=1}^{T} \left\{(\Rn W_n \tYn)' \tVn\right\}.
\eea
\item
\begin{eqnarray*}
&&\frac{\partial \left\{\sigma^{-2} \sum_{t}^{T} (\Hn \tVn)'\Hn\tVn + (T-1)\text{tr}\{ \qHn \} \right\}}{\partial\theta} = \\ &&\left[\begin{array}
{ccccc}
B^{(3,3,\beta')}(\beta',\lambda,\rho, \sigma^2)  & B^{(3,3,\lambda)} (\beta',\lambda,\rho, \sigma^2) &
B^{(3,3,\rho)}(\beta',\lambda,\rho, \sigma^2)    & B^{(3,3,\sigma^2)}(\beta',\lambda,\rho, \sigma^2) 
\end{array}\right], 
\end{eqnarray*}
where, making use of $\partial_{\beta^{'}}\Hn=0$,
\bea
B^{(3,3,\beta')}(\beta',\lambda,\rho, \sigma^2) &=& \frac{1}{\sigma^{2}} \sum_{t=1}^{T} \left\{ \left(\Hn  \underbrace{\partial_{\beta'}\tVn }_{\text{\ \ see \ 
\ } (\ref{Eq. dVbeta})}\right)'\Hn \tVn \right\} \nn \\
&+& \frac{1}{\sigma^{2}}\sum_{t=1}^{T} \left\{ \left( \Hn \tVn \right)' \Hn  \underbrace{\partial_{\beta'}\tVn }_{\text{\ \ see \ \ } (\ref{Eq. dVbeta})} ^{'}
\right\} \nn \\ 
&=& -\frac{1}{\sigma^{2}} \sum_{t=1}^{T} \left( \Hn \Rn \tXn \right)'\Hn \tVn \nn \\
&-& \frac{1}{\sigma^{2}} \sum_{t=1}^{T} \left(\Hn \Rn \tXn  \right)' \Hn \tVn \nn\\
&=& -\frac{2}{\sigma^{2}} \sum_{t=1}^{T} \left( M_n \tXn \right)'\Hn \tVn
\eea
\bea
B^{(3,3,\lambda)}(\beta',\lambda,\rho, \sigma^2) &=&\frac{1}{\sigma^{2}}  \sum_{t=1}^{T}\left\{ \left( \Hn \underbrace{\partial_\lambda \tVn}_{\text{\ \ see \ 
\ } (\ref{Eq. dVlambda})}     \right)' \Hn\tVn      \right\} \nn \\
&+& \frac{1}{\sigma^{2}} \sum_{t=1}^{T}\left\{  \left( \Hn \tVn\right)' \Hn  \underbrace{\partial_\lambda \tVn}_{\text{\ \ see \ \ } (\ref{Eq. 
dVlambda})}  \right\} \nn \\
&=& -\frac{1}{\sigma^{2}} \sum_{t=1}^{T}\left\{  \left(\Hn \Rn W_n \tYn\right)'\Hn \tVn \right\} \nn\\
&-& \frac{1}{\sigma^{2}} \sum_{t=1}^{T}\left\{\left(\Hn \tVn \right)'\Hn \Rn W_n \tYn \right\}
\eea
\bea
B^{(3,3,\rho)}(\beta',\lambda,\rho, \sigma^2) &=& \sigma^{-2} \sum_{t}^{T} (\underbrace{\partial_{\rho}\Hn}_{\text{\ \ see \ \ } (\ref{Eq. dqHnrho})} 
\tVn)'\Hn\tVn \nn \\
&+& \sigma^{-2} \sum_{t}^{T} (\Hn \underbrace{\partial_{\rho}\tVn}_{\text{\ \ see \ \ } (\ref{Eq. dVrho})})'\Hn\tVn  \nn \\
&+& \sigma^{-2} \sum_{t}^{T} (\Hn \tVn)'\underbrace{\partial_{\rho}\Hn}_{\text{\ \ see \ \ } (\ref{Eq. dqHnrho})} \tVn  \nn \\
&+& \sigma^{-2} \sum_{t}^{T} (\Hn \tVn)'\Hn\underbrace{\partial_{\rho}\tVn}_{\text{\ \ see \ \ } (\ref{Eq. dVrho})}  \nn \\
&+& (T-1)\text{tr}\{ \underbrace{\partial_{\rho} \qHn}_{\text{\ \ see \ \ } (\ref{Eq. dqHnrho})} \} \nn \\
&=& \frac{1}{\sigma^2}\sum_{t=1}^{T} (\qHn \tVn)' \Hn \tVn \nn \\
&-&\frac{1}{\sigma^2}\sum_{t=1}^{T} (\qHn \tVn)' \Hn \tVn \nn \\
&+& \frac{1}{\sigma^2}\sum_{t=1}^{T} (\Hn \tVn)' \qHn \tVn \nn \\
&-&  \frac{1}{\sigma^2}\sum_{t=1}^{T} (\Hn \tVn)' \qHn \tVn \nn \\
&+& (T-1)\text{tr}\{ 2\Hn^3 \} \nn \\
&=&  (T-1)\text{tr}\{ 2\Hn^3 \} 
\eea
\bea
B^{(3,3,\sigma^2)}(\beta',\lambda,\rho, \sigma^2) = &=& -\sigma^{-4} \sum_{t=1}^{T} \left\{(\Hn \tVn)' \Hn \tVn\right\}.
\eea
\item
\begin{eqnarray*}
&&\frac{\partial \left\{ \sigma^{-4} \sum_{t=1}^{T} (\Hn \tVn)' \tVn  \right\}}{\partial\theta} = \\ &&\left[\begin{array}{ccccc}
B^{(4,3,\beta')}(\beta',\lambda,\rho, \sigma^2)  & B^{(4,3,\lambda)} (\beta',\lambda,\rho, \sigma^2) &
B^{(4,3,\rho)}(\beta',\lambda,\rho, \sigma^2)    & B^{(4,3,\sigma^2)}(\beta',\lambda,\rho, \sigma^2) 
\end{array}\right], 
\end{eqnarray*}
where
\bea
B^{(4,3,\beta')}(\beta',\lambda,\rho, \sigma^2)  & =& \frac{1}{\sigma^{4} }\sum_{t=1}^{T} \left\{ \left(
\Hn \underbrace{\partial_{\beta'}\tVn }_{\text{\ \ see \ \ } (\ref{Eq. dVbeta})} \right)' \tVn + \left(\left(\Hn \tVn\right)'  \underbrace{\partial_
{\beta'}\tVn }_{\text{\ \ see \ \ } (\ref{Eq. dVbeta})} \right)^{'}    \right\}\nn \\ 
&=& -\frac{1}{\sigma^{4} }\sum_{t=1}^{T} \left\{ \left(
\Hn \Rn \tXn \right)'\tVn +  \left(
\Rn \tXn \right)'\Hn \tVn \right\} \nn \\
\eea
\bea
B^{(4,3,\lambda)}(\beta',\lambda,\rho, \sigma^2)  &=& \frac{1}{\sigma^{4} } \sum_{t=1}^{T} \left\{ \left(
\Hn \underbrace{\partial_{\lambda}\tVn }_{\text{\ \ see \ \ } (\ref{Eq. dVlambda})} \right)' \tVn + \left(\Hn \tVn\right)'  \underbrace
{\partial_{\lambda}\tVn }_{\text{\ \ see \ \ } (\ref{Eq. dVlambda})}     \right\} \nn \\
&=& \frac{1}{\sigma^{4} } \sum_{t=1}^{T} \left\{ \left(
\Hn \Rn W_n \tYn \right)' \tVn \right\}\nn \\
&+&\frac{1}{\sigma^{4} } \sum_{t=1}^{T} \left\{\left(\Hn \tVn\right)' \Rn W_n \tYn \right\} \nn \\
\eea
\bea
B^{(4,3,\rho)}(\beta',\lambda,\rho, \sigma^2)  &=& \frac{1}{\sigma^{4} } \sum_{t=1}^{T} \left\{\left( \underbrace{\partial_\rho \Hn}_{\text{\ \ see \ \ } (\ref
{Eq. dRrho})} \tVn + \Hn \underbrace{\partial_{\rho}\tVn}_{\text{\ \ see \ \ } (\ref{Eq. dVrho})}  \right)' \tVn   \right\} \nn \\
&+& \frac{1}{\sigma^{4} }  \sum_{t=1}^{T} \left\{ \left( \Hn  \tVn \right)' \underbrace{\partial_{\rho}\tVn}_{\text{\ \ see \ \ } (\ref{Eq. dVrho})}   
\right\} \nn \\
&=& \frac{1}{\sigma^{4} }  \sum_{t=1}^{T} \left\{ \left( \qHn  \tVn - \qHn \tVn \right)'\tVn \right\} \nn \\
&-& \frac{1}{\sigma^{4} }  \sum_{t=1}^{T} \left\{ \left( \Hn  \tVn\right)'\Hn \tVn \right\} \nn \\
&=& -\frac{1}{\sigma^{4} }  \sum_{t=1}^{T} \left\{ \left( \Hn  \tVn\right)'\Hn \tVn \right\} 
\eea
\bea
B^{(4,3,\sigma^2)}(\beta',\lambda,\rho, \sigma^2)  &=&  -2\sigma^{-6} \sum_{t=1}^{T} (\Hn \tVn)' \tVn.
\eea
\item
\begin{eqnarray*}
&&\frac{\partial \left\{-\frac{m}{2\sigma^{4}} + \sigma^{-6} \sum_t \tilde{V}_{nt}'(\xi)\tVn \right\}}{\partial\theta}  = \\ &&\left[\begin{array}{ccccc}
B^{(4,4,\beta')}(\beta',\lambda,\rho, \sigma^2)  & B^{(4,4,\lambda)} (\beta',\lambda,\rho, \sigma^2) &
B^{(4,4,\rho)}(\beta',\lambda,\rho, \sigma^2)    & B^{(4,4,\sigma^2)}(\beta',\lambda,\rho, \sigma^2) 
\end{array}\right], 
\end{eqnarray*}
where
\bea
B^{(4,4,\beta')}(\beta',\lambda,\rho, \sigma^2)  & =& \frac{1}{\sigma^{6}} \sum_{t=1}^{T} \left\{ \left(\underbrace{\partial_{\beta'}\tVn }_{\text{\ \ see \ \ } (\ref{Eq. dVbeta})} \right)' \tVn+ \left( \tilde{V}_{nt}'(\xi) \left(\underbrace{\partial_{\beta'}\tVn }_{\text{\ \ see \ \ } (\ref{Eq. dVbeta})} \right)\right)^{'}  \right\} \nn \\
&=& -\frac{1}{\sigma^{6}} \sum_{t=1}^{T} \left\{ \left(\Rn \tXn \right)' \tVn +  \left(\Rn \tXn\right)' \tVn \right\} \nn \\
&=&-\frac{2}{\sigma^{6}} \sum_{t=1}^{T}  \left(\Rn \tXn \right)' \tVn 
\eea
\bea
B^{(4,4,\lambda)} (\beta',\lambda,\rho, \sigma^2) &=& \frac{1}{\sigma^{6}} \sum_{t=1}^{T} \left\{ \left(\underbrace{\partial_{\lambda}\tVn }_{\text{\ \ see \ \ } (\ref{Eq. dVlambda})} \right)' \tVn+  \tilde{V}_{nt}'(\xi) \left(\underbrace{\partial_{\lambda}\tVn }_{\text{\ \ see \ \ } (\ref{Eq. dVlambda})} \right)  \right\} \nn \\ 
&=& -\frac{1}{\sigma^{6}} \sum_{t=1}^{T} \left\{ \left(\Rn W_n \tYn \right)' \tVn +\rVn \Rn W_n \tYn \right\} \nn \\
\eea
\bea
B^{(4,4,\rho)} (\beta',\lambda,\rho, \sigma^2) &=& \frac{1}{\sigma^{6}}\sum_{t=1}^{T} \left\{ \left(\underbrace{\partial_{\rho}\tVn }_{\text{\ \ see \ \ } (\ref{Eq. dVrho})} \right)' \tVn+  \tilde{V}_{nt}'(\xi) \left(\underbrace{\partial_{\rho}\tVn }_{\text{\ \ see \ \ } (\ref{Eq. dVrho})} \right)  \right\} \nn \\
&=&- \frac{1}{\sigma^{6}}\sum_{t=1}^{T} \left\{ \left(\Hn \tVn \right)'\tVn + \rVn \Hn \tVn \right\}  \nn \\
\eea
\bea
B^{(4,4,\sigma^2)}(\beta',\lambda,\rho, \sigma^2) &=& m\sigma^{-6}-3\sigma^{-8}\sum_{t=1}^{T}\tilde{V}_{nt}'(\xi)
\tVn
\eea
\end{itemize}

\subsection{Component-wise calculation of $- \mathbb{E} \left( \frac{1}{m} \frac{\partial^2 \ell_{n,T}(\theta_0)}{\partial \theta \partial \theta'}  \right)$} 

\label{AppendixD} 
From the model setting, we have $V_{nt} = (V_{1t},V_{2t},\cdots V_{nt})$ and $V_{it}$ is i.i.d. across i and t with zero mean and variance $\sigma_0^2$. So $\mathbb{E}(V_{nt}) = 0_{n \times 1}$, $Var(V_{nt})=\sigma_0^2I_n$. 
Knowing that $\tVn = V_{nt}-\sum_{t=1}^{T}V_{nt}/T$, \\
$\mathbb{E}(\tVn)=0_{n \times 1}$, 
$Var(\tVn)= Var((1-1/T)V_{nt}+1/T \sum_{j=1,j\neq t}^{T} V_{nj})=\frac{T-1}{T}\sigma_0^2 I_n$.\\
 $\tVn=\Rn[S_n(\lambda) \tYn-\tXn \beta]$, so $\mathbb{E}(\tYn)= S_n^{-1}(\lambda)\tXn \beta$.\\
 Some other notations: 
$\ddot{X}_{nt} = R_n\tXn, H_n=M_n R_n^{-1}, G_n=W_n S_n^{-1},\ddot{G}_n=R_n G_n R_n^{-1}$.
\begin{itemize}

\item \bea
\mathbb{E}\left[\frac{1}{m\sigma_0^{2}} \sum_t (R_n(\rho_0) \tXn)'(R_n(\rho_0) \tXn)\right]= \frac{1}{m\sigma_0^{2}} \sum_{t=1}^{T}\ddot{X}_{nt}^{'}\ddot{X}_{nt} 
\eea
\begin{eqnarray}
\mathbb{E}\left[ \frac{1}{m\sigma_0^{2}} \sum_t (R_n(\rho_0) W_n \tYn)'(R_n(\rho_0) \tXn) \right] &=& \frac{1}{m\sigma_0^{2}}\sum_t \left(R_n(\rho_0) W_n E(\tYn)\right)'(R_n(\rho_0) \tXn) \nonumber \\   
&=& \frac{1}{m\sigma_0^{2}}\sum_t (R_n(\rho_0) \underbrace{W_n S_n^{-1}}_{G_n}\tXn\beta_0)'\ddot{X}_{nt} \nonumber \\
&=& \frac{1}{m\sigma_0^{2}} \sum_t (R_n(\rho_0) G_nR_n^{-1}R_n\tXn\beta_0)'\ddot{X}_{nt} \nonumber \\ 
&=& \frac{1}{m\sigma_0^{2}}\sum_t (\ddot{G}_n(\lambda_0)\ddot{X}_{nt}\beta_0)'\ddot{X}_{nt}
\end{eqnarray}
\begin{eqnarray}
&&\mathbb{E}\left[\frac{1}{m\sigma_0^{2}} \sum_t \left\{(H_n(\rho_0) \tilde{V}_{nt}(\xi_0))'(R_n(\lambda_0) \tXn) + \tilde{V}_{nt}^{'}(\xi_0) M_n \tXn\right\}\right] \nonumber \\
&&= \frac{1}{m\sigma_0^{2}} \sum_t \left\{(H_n(\rho_0) \mathbb{E}\left[\tilde{V}_{nt}(\xi_0)\right])'(R_n(\lambda_0) \tXn) + \mathbb{E}\left[\tilde{V}_{nt}^{'}(\xi_0)\right] M_n \tXn\right\}=0_{1 \times k} \nn \\
\end{eqnarray}
\begin{eqnarray}
\mathbb{E}\left[\frac{1}{m\sigma_0^{4}} \sum_t \tilde{V}_{nt}^{'}(\xi_0)R_n(\lambda_0)\tXn \right] &=& \frac{1}{m\sigma_0^{4}} \sum_t \mathbb{E}\left[\tilde{V}_{nt}^{'}(\xi_0)\right] R_n(\lambda_0)\tXn =0_{1 \times k} \nn \\
\end{eqnarray}
So we prove the first column of  $\Sigma_{0,n,T}$, that is:\\
\begin{eqnarray*}
\frac{1}{m\sigma_0^2} \left( \begin{array}{c}    \sum_{t}\ddot{X}_{nt}^{'}\ddot{X}_{nt}    \\
                                                \sum_t (\ddot{G}_n(\lambda_0)\ddot{X}_{nt}\beta_0)'\ddot{X}_{nt} \\
                                                0_{1  \times k}  \\    
                                                0_{1  \times k}    
 \end{array}\right)
\end{eqnarray*}

\item \begin{eqnarray}
& &\mathbb{E}\left[ \frac{1}{m\sigma_0^{2}} \sum_t \left(R_n(\rho_0) W_n \tYn)'(R_n(\rho_0)W_n \tYn\right) + \frac{T-1}{m} \text{tr}(G_n^2(\lambda_0))\right] \nonumber \\
& &= \mathbb{E}\left[ \frac{1}{m\sigma_0^{2}} \sum_t \left(R_n W_n S_n^{-1}(R_n^{-1}\tilde{V}_{nt}+\tXn \beta_0)\right)' \left(R_nW_n S_n^{-1}(R_n^{-1}\tilde{V}_{nt}+\tXn \beta_0)\right)\right] \nn \\
&& + \frac{1}{n} \text{tr}(G_n^2(\lambda_0)) \nonumber\\
& &=\mathbb{E}\left[ \left(\frac{1}{m\sigma_0^{2}} \sum_t \left(\ddot{G}_n\tilde{V}_{nt}(\xi_0)+\ddot{G}_n\ddot{X}_{nt}\beta_0 \right)' \left(\ddot{G}_n\tilde{V}_{nt}(\xi_0)+\ddot{G}_n\ddot{X}_{nt}\beta_0\right)\right)\right]+\frac{1}{n} \text{tr}(G_n^2(\lambda_0)) \nonumber\\
& &=\mathbb{E}\left[\frac{1}{m\sigma_0^{2}}\sum_t \left(\ddot{G}_n(\lambda_0)\tilde{V}_{nt}(\xi_0)\right)'\left(\ddot{G}_n(\lambda_0)\tilde{V}_{nt}(\xi_0)\right)\right]\nn \\
&&+\frac{1}{m\sigma_0^{2}} \sum_t\left(\ddot{G}_n(\lambda_0)\ddot{X}_{nt}\beta_0\right)'\left(\ddot{G}_n(\lambda_0)\ddot{X}_{nt}\beta_0\right) \nonumber \\
& &+\frac{1}{m\sigma_0^{2}}\sum_t \left(\ddot{G}_n(\lambda_0)\ddot{X}_{nt}\beta_0\right)'\ddot{G}_n(\lambda_0)\underbrace{\mathbb{E}\left[\tilde{V}_{nt}(\xi_0)\right]}_{0_{n \times 1}}+\frac{1}{n} \text{tr}(G_n^2(\lambda_0)) \nonumber \\
& &=\frac{1}{n} \text{tr}(\ddot{G}_n^{'}(\lambda_0)\ddot{G}_n(\lambda_0))+\frac{1}{n} \text{tr}(R_n^{-1}(\rho_0)R_n(\rho_0)G_n^2(\lambda_0))
\nn \\&&+\frac{1}{m\sigma_0^{2}} \sum_t\left(\ddot{G}_n(\lambda_0)\ddot{X}_{nt}\beta_0\right)'\left(\ddot{G}_n(\lambda_0)\ddot{X}_{nt}\beta_0\right) \nonumber \\ 
& &=\frac{1}{n} \text{tr}\left(\ddot{G}_n^{'}(\lambda_0)\ddot{G}_n(\lambda_0)+R_n(\rho_0)G_n(\lambda_0)R_n^{-1}(\rho_0)R_n(\rho_0)G_n(\lambda_0)R_n^{-1}\right) \nonumber \\
& &+\frac{1}{m\sigma_0^{2}} \sum_t\left(\ddot{G}_n(\lambda_0)\ddot{X}_{nt}\beta_0\right)'\left(\ddot{G}_n(\lambda_0)\ddot{X}_{nt}\beta_0\right) \nonumber \\ 
& &=\frac{1}{n} \text{tr}\left(\ddot{G}_n^{'}(\lambda_0)\ddot{G}_n(\lambda_0)+\ddot{G}_n(\lambda_0)\ddot{G}_n(\lambda_0)\right)+\frac{1}{m\sigma_0^{2}} \sum_t\left(\ddot{G}_n(\lambda_0)\ddot{X}_{nt}\beta_0\right)'\left(\ddot{G}_n(\lambda_0)\ddot{X}_{nt}\beta_0\right) \nonumber \\ 
& &=\frac{1}{n} \text{tr}\left(\ddot{G}_n^{S}(\lambda_0)\ddot{G}_n(\lambda_0)\right)+\frac{1}{m\sigma_0^{2}} \sum_t\left(\ddot{G}_n(\lambda_0)\ddot{X}_{nt}\beta_0\right)'\left(\ddot{G}_n(\lambda_0)\ddot{X}_{nt}\beta_0\right)
\end{eqnarray}

\begin{eqnarray}
& &\mathbb{E}\left[ \frac{1}{m \sigma_0^{2}} \sum_t (R_n(\rho_0) W_n \tYn)'(H_n(\lambda_0)  \tilde{V}_{nt}(\xi_0)) + \frac{1}{m\sigma^{2} }\sum_t  (M_n W_n \tYn)' \tilde{V}_{nt}(\xi_0) \right] \nonumber \\
& &=  \mathbb{E}\left[ \frac{1}{m \sigma_0^{2}} \sum_t  \left(R_n(\rho_0) W_n S_n^{-1}(\lambda_0)(R_n^{-1}(\rho_0)\tilde{V}_{nt}(\xi_0)+\tXn \beta_0)\right)'(H_n(\lambda_0)  \tilde{V}_{nt}(\xi_0)) \right] \nonumber\\
& &+ \mathbb{E}\left [  \frac{1}{m \sigma_0^{2}} \sum_t \left(M_n W_nS_n^{-1}(\lambda_0)(R_n^{-1}(\rho_0)\tilde{V}_{nt}(\xi_0)+\tXn \beta_0)\right)'\tilde{V}_{nt}(\xi_0) \right] \nonumber \\
& &= \mathbb{E}\left[ \frac{1}{m \sigma_0^{2}} \sum_t \left( \left(\ddot{G}_n(\lambda_0)\tilde{V}_{nt}(\xi_0)\right)'(H_n(\lambda_0)  \tilde{V}_{nt}(\xi_0))\right)\right]\nn \\ 
&&+\mathbb{E}\left[\frac{1}{m \sigma_0^{2}} \sum_t  \left(\left(\ddot{G}_n(\lambda_0)\ddot{X}_{nt} \beta_0)\right)'(H_n(\lambda_0)  \tilde{V}_{nt}(\xi_0))\right) \right] \nonumber\\
& &+\mathbb{E}\left[ \frac{1}{m \sigma_0^{2}} \sum_t \left( \left(H_n(\rho_0)\ddot{G}_n(\lambda_0)\tilde{V}_{nt}(\xi_0)     \right)'\tilde{V}_{nt}(\xi_0)+\left(M_nG_n(\lambda_0)\tXn\beta_0\right)'\tilde{V}_{nt}(\xi_0) \right) \right]   \nonumber \\
& &=\frac{1}{n} \text{tr}\left(H_n^{'}(\rho_0)\ddot{G}_n(\lambda_0)\right) +\frac{1}{n} \text{tr}\left(H_n(\rho_0)\ddot{G}_n(\lambda_0)\right) =\frac{1}{n} \text{tr}\left(H_n^{S}(\rho_0)\ddot{G}_n(\lambda_0)\right)
\end{eqnarray}

\begin{eqnarray}
& &\mathbb{E}\left[\frac{1}{m\sigma_0^{4}} \sum_t \left(R_n(\rho_0) W_n \tYn   \right)' \tilde{V}_{nt}(\xi_0)    \right]   \nonumber \\
& &= \mathbb{E}\left[\frac{1}{m\sigma_0^{4}} \sum_t \left(R_n(\rho_0) W_n S_n^{-1}(\lambda_0)(R_n^{-1}(\rho_0)\tilde{V}_{nt}(\xi_0)+\tXn \beta_0)   \right)' \tilde{V}_{nt}(\xi_0)    \right] \nonumber \\
& &= \mathbb{E}\left[\frac{1}{m\sigma_0^{4}} \sum_t \left(\underbrace{R_n(\rho_0) W_n S_n^{-1}(\lambda_0)R_n^{-1}(\rho_0)}_{\ddot{G}_n(\lambda_0)}\tilde{V}_{nt}(\xi_0) \right)'\tilde{V}_{nt}(\xi_0)\right] \nonumber \\
& &+  \mathbb{E}\left[\frac{1}{m\sigma_0^{4}} \sum_t \left(R_n(\rho_0) W_n\tXn \beta_0 \right)' \tilde{V}_{nt}(\xi_0)    \right] \nonumber \\
& & = \frac{1}{n\sigma_0^{2}}\text{tr}(\ddot{G}_n(\lambda_0))
\end{eqnarray}

The second column of $\Sigma_{0,n,T}$ is:\\
\begin{eqnarray*}
\left( \begin{array}{c}    \frac{1}{m\sigma_0^2}\sum_t \ddot{X}_{nt}^{'}(\ddot{G}_n(\lambda_0)\ddot{X}_{nt}\beta_0)\\
\frac{1}{n} \text{tr}\left(\ddot{G}_n^{S}(\lambda_0)\ddot{G}_n(\lambda_0)\right)+\frac{1}{m\sigma_0^{2}} \sum_t\left(\ddot{G}_n(\lambda_0)\ddot{X}_{nt}\beta_0\right)'\left(\ddot{G}_n(\lambda_0)\ddot{X}_{nt}\beta_0\right)\\
\frac{1}{n} \text{tr}\left(H_n^{S}(\rho_0)\ddot{G}_n(\lambda_0)\right)\\
\frac{1}{n\sigma_0^{2}}\text{tr}(\ddot{G}_n(\lambda_0))
\end{array}\right)
\end{eqnarray*}


\item \begin{eqnarray}   
&&\mathbb{E}\left[\frac{1}{m\sigma_0^{2} }\sum_t \left(H_n(\rho_0)\tilde{V}_{nt}(\xi_0) \right)' H_n(\rho_0)\tilde{V}_{nt}(\xi_0)+ \frac{T-1}{m}\text{tr}\left(H_n^2 (\rho_0)\right)\right] \nonumber \\
&&= \frac{1}{n}\text{tr}\left(H_n^{'}(\rho_0)H_n(\rho_0)+H_n^2 (\rho_0)\right) = \frac{1}{n}\text{tr}\left(H_n^{S}(\rho_0)H_n(\rho_0)\right)
\end{eqnarray}
\begin{eqnarray}  
&&\mathbb{E}\left[\frac{1}{m\sigma_0^{4} }\sum_t \left(H_n(\rho_0)\tilde{V}_{nt}(\xi_0) \right)' \tilde{V}_{nt}(\xi_0)\right]
= \frac{1}{n\sigma_0^{2}}\text{tr}(H_n(\rho_0)) 
\end{eqnarray}

The third column of $\Sigma_{0,n,T}$ is:\\
\begin{eqnarray*}
\left( \begin{array}{c} 0_{k \times 1}\\
    \frac{1}{n} \text{tr}\left(H_n^{S}(\rho_0)\ddot{G}_n(\lambda_0)\right)\\                         
 \frac{1}{n}\text{tr}\left(H_n^{S}(\rho_0)H_n(\rho_0)\right)\\
\frac{1}{n\sigma_0^{2}}\text{tr}(H_n(\rho_0)) 
\end{array}\right)
\end{eqnarray*}


\item \begin{eqnarray}    
&& \mathbb{E}\left[-\frac{m}{2m\sigma_0^{4}} + \frac{1}{m\sigma_0^{6} }\sum_t \tilde{V}_{nt}'(\xi_0) \tilde{V}_{nt}(\xi_0) \right]=-\frac{1}{2\sigma_0^{4}}+ \frac{1}{m\sigma_0^{6} }T\frac{T-1}{T}\sigma_0^2\cdot n =\frac{1}{2\sigma_0^{4}} \nn \\
\end{eqnarray}

The fourth column of  $\Sigma_{0,n,T}$ is:\\
\begin{eqnarray*}
\left( \begin{array}{c}  0_{k \times 1}\\
                                 \frac{1}{n\sigma_0^{2}}\text{tr}(\ddot{G}_n(\lambda_0))\\
                                 \frac{1}{n\sigma_0^{2}}\text{tr}(H_n(\rho_0))\\
                                 \frac{1}{2\sigma_0^{4}}                                  
\end{array}\right).
\end{eqnarray*}

Thus, we have:
{\footnotesize
\begin{eqnarray*}
\Sigma_{0,T} = &&\left( \begin{array}{cccc} \frac{1}{m\sigma_0^2} \sum_{t}\ddot{X}_{nt}^{'}\ddot{X}_{nt} & \frac{1}{m\sigma_0^2}\sum_t \ddot{X}_{nt}^{'}(\ddot{G}_n(\lambda_0)\ddot{X}_{nt}\beta_0)&0_{k \times 1}&0_{k \times 1} \\
                                                \frac{1}{m\sigma_0^2}\sum_t (\ddot{G}_n(\lambda_0)\ddot{X}_{nt}\beta_0)'\ddot{X}_{nt}&\frac{1}{m\sigma_0^{2}} \sum_t\left(\ddot{G}_n(\lambda_0)\ddot{X}_{nt}\beta_0\right)'\left(\ddot{G}_n(\lambda_0)\ddot{X}_{nt}\beta_0\right) &0&0\\
                                                0_{1  \times k} &0&0& 0\\    
                                                0_{1  \times k}  &  0&0&0  
\end{array}\right) \\
&&+\left( \begin{array}{cccc}0_{k \times k}& 0_{k \times 1}&0_{k \times 1}&0_{k \times 1} \\
                                              0_{1 \times k}  &\frac{1}{n} \text{tr}\left(\ddot{G}_n^{S}(\lambda_0)\ddot{G}_n(\lambda_0)\right)& \frac{1}{n} \text{tr}\left(H_n^{S}(\rho_0)\ddot{G}_n(\lambda_0)\right)&\frac{1}{n\sigma_0^{2}}\text{tr}(\ddot{G}_n(\lambda_0)) \\
                                                0_{1  \times k} &\frac{1}{n} \text{tr}\left(H_n^{S}(\rho_0)\ddot{G}_n(\lambda_0)\right)&\frac{1}{n}\text{tr}\left(H_n^{S}(\rho_0)H_n(\rho_0)\right)&  \frac{1}{n\sigma_0^{2}}\text{tr}(H_n(\rho_0))\\    
                                                0_{1  \times k}  &  \frac{1}{n\sigma_0^{2}}\text{tr}(\ddot{G}_n(\lambda_0))&\frac{1}{n\sigma_0^{2}}\text{tr}(H_n(\rho_0))&\frac{1}{2\sigma_0^{4}}   
\end{array}\right)
\end{eqnarray*}
}
\end{itemize}

\section{Algorithms}  \label{Section: AppendixAlgo}

\subsection{Algorithm 1} \label{Subsec: Algm1}

Most of the quantities related to the saddlepoint density approximation $p_{n,T}$ and the tail area in (4.15) 
are available in closed-form. 
Here, we provide an algorithm (see Algorithm 1) in which we itemize the main computational steps needed to implement the saddlepoint tail area approximation, 
for a given transformation $q$ 
and for a given reference parameter $\theta_0$---it is, e.g., the parameter characterizing the null hypothesis in a simple hypothesis testing, where the tail area probability is an approximate $p$-value.  

\begin{algorithm}[tbh] 
\caption{Density and tail-area saddlepoint approximation} \label{Algm1}
\SetAlgoLined
\KwIn{a sample of $Y\nt$ and $X\nt$; the reference parameter $\theta_0$.}
\KwOut{density and tail area saddlepoint approximation.}
\begin{algorithmic}[1]
\STATE Given a sample of $Y\nt$ and $X\nt$, first compute the transformed values $\tilde{Y}\nt$ and $\tilde{X}\nt$, as in (3.3)
\STATE Compute $\mu_{n,T}$, $\sigma^2_g$,  $\varkappa^{(3)}_{n,T}$ and $\varkappa^{(4)}_{n,T}$, using the formulae available in 
Appendix A.3 and A.4
\STATE Combine the expressions of the approximate cumulants into the analytical expression of the approximate c.g.f.\ $\tilde{\mathcal{K}}_{n,T}(\nu)$ given by (A.42) 
in Appendix A.4 
\STATE Define a grid $\mathcal{Z}$ of points $\{z_j, j=1,...,j_{\max}\}$. Select the min (i.e. $z_1$) and max (i.e. $z_{j_{\max}}$) value of this grid according to the min and max values taken by  the parameter of interest 
 
\STATE \For{$j \gets 1$ \textbf{to} $j_{\max}$}{
Compute the saddlepoint density approximation $p_{n,T}(z_j)$ as in (4.13)
}
\STATE Compute the tail area using formula (4.15) 
or by numerical integration of the density $p_{n,T}$
\end{algorithmic}
\end{algorithm}



Some additional comments are in order. Step 2  requires the approximation of some expected values, like, e.g., $\mathbb{E}
\left[g^2_{i,T}(\psi,P_{\theta_0}) \right]$, for $g_{i,T}(\psi,P_{\theta_0}) $ as in (4.10), 
which numerical methods can provide. For instance, we can rely on numerical integration with respect to the underlying  Gaussian distribution $P_{\theta_0}$, or on the 
Laplace method, or on the approximation of the integrals by Riemann sums, using simulated data.
In our experience, the latter approximation represents a good compromise between accuracy, simplicity to code, and 
computational burden. Moreover for Step 5, one needs to solve Eq. (4.14) 
for each grid point. Some well-known methods are available. For instance, \citet{K06} p. 84 suggests the use of 
Newton-Raphson derivative-based methods.  Specifically, for a given starting value $\nu_0$ (which is an approximate solution to the saddlepoint equation), a first-order Taylor expansion of the saddlepoint equation yields $\tilde{\mathcal{K}'}_{n,T}(\nu_0) + \tilde{\mathcal{K}''}_{n,T}(\nu_0) (\nu-\nu_0) \approx z $, whose solution is 
$
\nu = \nu_0 + {(z-\tilde{\mathcal{K}'}_{n,T}(\nu_0))}/{\tilde{\mathcal{K}''}_{n,T}(\nu_0)}.$
We apply this solution to update the approximate solution $\nu_0$, yielding a new approximation $\nu$ to the saddlepoint. We iterate the procedure until the approximate solution is accurate enough---e.g., we can set a tolerance value (say, {\it tol}) and iterate the procedure till $\vert z-\tilde{\mathcal{K}'}_{n,T}(\nu) \vert < \text{{\it tol}}$. An alternative option is the secant method; see \citet{K06} p. 86. As noticed by \citet{GR96}, due to the approximate nature of the c.g.f.\ in (A.42), 
 the saddlepoint equation  can admit multiple solutions in some areas of the density. To solve this problem, one may use the modified  c.g.f. proposed by \citet{W92general}.

\subsection{Algorithm 2} \label{Subsec: Algm2}

To implement the test (6.2) 
for the problem (6.1), 
we propose the following algorithm: 

\begin{algorithm}[H] \label{AlgmTest}
\SetAlgoLined
\KwIn{a sample of $Y_{nt}$ and $X_{nt}$; the testing problem in (6.1).}
\KwOut{saddlepoint test based on the test statistic ${SAD}_n(\hat{\lambda})$}
\begin{algorithmic}[1]
\STATE Given a sample of $Y_{nt}$ and $X_{nt}$, first compute the transformed values $\tilde{Y}_{nt}$ and $\tilde{X}_{nt}$, as in (3.3)
\STATE Compute the maximum likelihood estimate $(\hat\lambda, \hat{\theta}_2)$
\STATE \For{$i \gets 1$ \textbf{to} $n$}{
Plug-in $\hat\lambda$ in $\psi_{i}^{(T)}(\lambda,\theta_2)$ and compute $\mathcal{K}_{\psi_i}^{(T)}(\nu, \hat\lambda,\theta_2)$
}
\STATE Compute  $\mathcal{K}_\psi(\nu,\hat\lambda,\theta_2)$ as in (6.3)
\STATE Obtain the value of ${SAD}_n(\hat{\lambda})$ and compare it to the quantile of $\chi_1^2$--- a chi-square with one degree-of-freedom.
\end{algorithmic}
\caption{Saddlepoint test in the presence of nuisance parameters}
\end{algorithm}

As far as Step 3 is concerned, we suggest to apply a standard MC integration to approximate numerically the  c.g.f. $\mathcal{K}_\psi$---this is in line with our suggestion for Algorithm \ref{Algm1}. Specifically, for each location $i$, we may approximate  
 $
    \mathcal{K}_{\psi_i}^{(T)}(\nu, \lambda,\theta_2) = \ln E_{P_{(\lambda_{0}, \theta_{2})}}[\exp
    \{\nu^{T}\psi_i^{(T)}(\lambda, \theta_2)\}]
 $ 
by simulating from the corresponding SARAR model under the null. To this end, for a user-specified MC size (\text{MC.size}), in each $j$-th MC run, we simulate the variables $\{Y_{nt}^{(j)}\}_{t=1,\cdots,T}$, under the probability $P_{(\lambda_{0}, \theta_{2})}$. We remark that $P_{(\lambda_{0}, \theta_{2})}$ is given by the composite null hypothesis, where the nuisance parameters are not specified. Thus, $\theta_2$ is not fixed  in the MC runs: the numerical optimization needed to compute the infimum over $\theta_2$ takes care of that aspect. Then,  we set 
     \begin{eqnarray*}
    {\mathcal{K}}_{\psi_{i}^{(T)}} (\nu,\hat\lambda,\theta_2) 
    &\approx& \ln \left ( \frac{1}{\text{MC.size}}\sum_{j= 1}
    ^{\text{MC.size}}\exp\{\nu^{T}\psi_{i,j}^{(T)}(\hat{\lambda},\theta_2)\} \right),
    \end{eqnarray*}
    where $\psi_{i,j}^{(T)}$ is the estimating function at location $i$ as evaluated in the $j$-th MC run. Finally, we build the test using the MC approximated c.g.f. ${\mathcal{K}}_\psi(\nu,\hat\lambda,\theta_2) = -n^{-1}\sum_{i = 1}^{n} {\mathcal{K}}_{\psi_{i}^{(T)}}(\nu,\hat\lambda,\theta_2)$.

\section{Additional results for the SAR(1) model}  \label{Section: Appendix3}

\subsection{First-order asymptotics} \label{Subsec: Asy1}
As it is customary in the statistical/econometric software, we consider three different spatial weight matrices: Rook matrix, Queen matrix, and Queen matrix with torus.  In Figure \ref{Fig: Wn1}, we display the geometry of $Y_{nt}$ as implied by each considered spatial matrix for $n=100$: the plots highlight that different matrices imply different spatial relations. For instance, we see that the Rook matrix implies fewer links than the Queen matrix. Indeed, the Rook criterion defines neighbours by the existence of a common edge between two spatial units, whilst the Queen criterion is less rigid and defines neighbours as spatial units sharing an edge or a vertex. Besides, we may interpret $\{Y_{nt}\}$ as a $n$-dimensional random field on the network graph which describes the known underlying spatial structure. Then, $W_n$ represents the weighted adjacency matrix (in the spatial econometrics literature, $W_n$ is called contiguity matrix). In Figure \ref{Fig: Wn1}, we display the geometry of a random field on a regular lattice (undirected graph).

\begin{figure}[htbp]
\begin{center}
\begin{tabular}{l}
\hspace{2.cm} Rook \hspace{3.75cm} Queen   \hspace{3.8cm} Queen torus   \\

\includegraphics[width=0.325\textwidth, height=0.25\textheight]{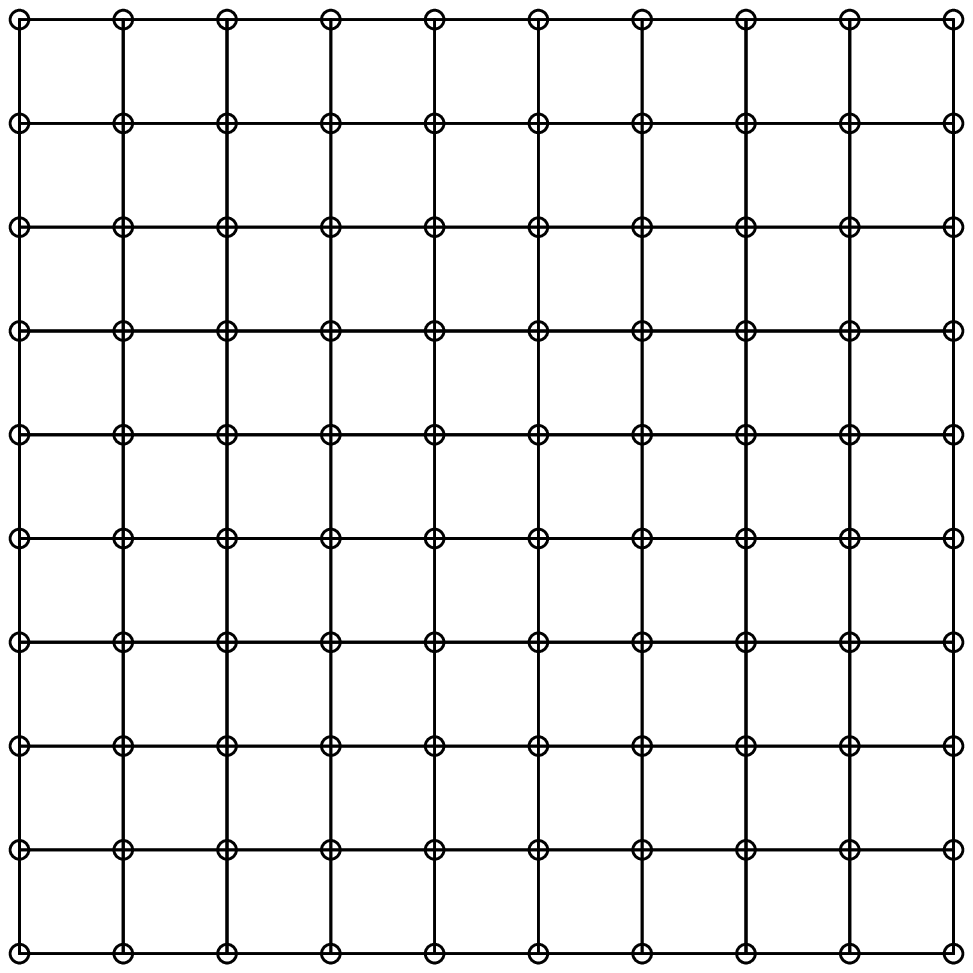}
\includegraphics[width=0.325\textwidth, height=0.25\textheight]{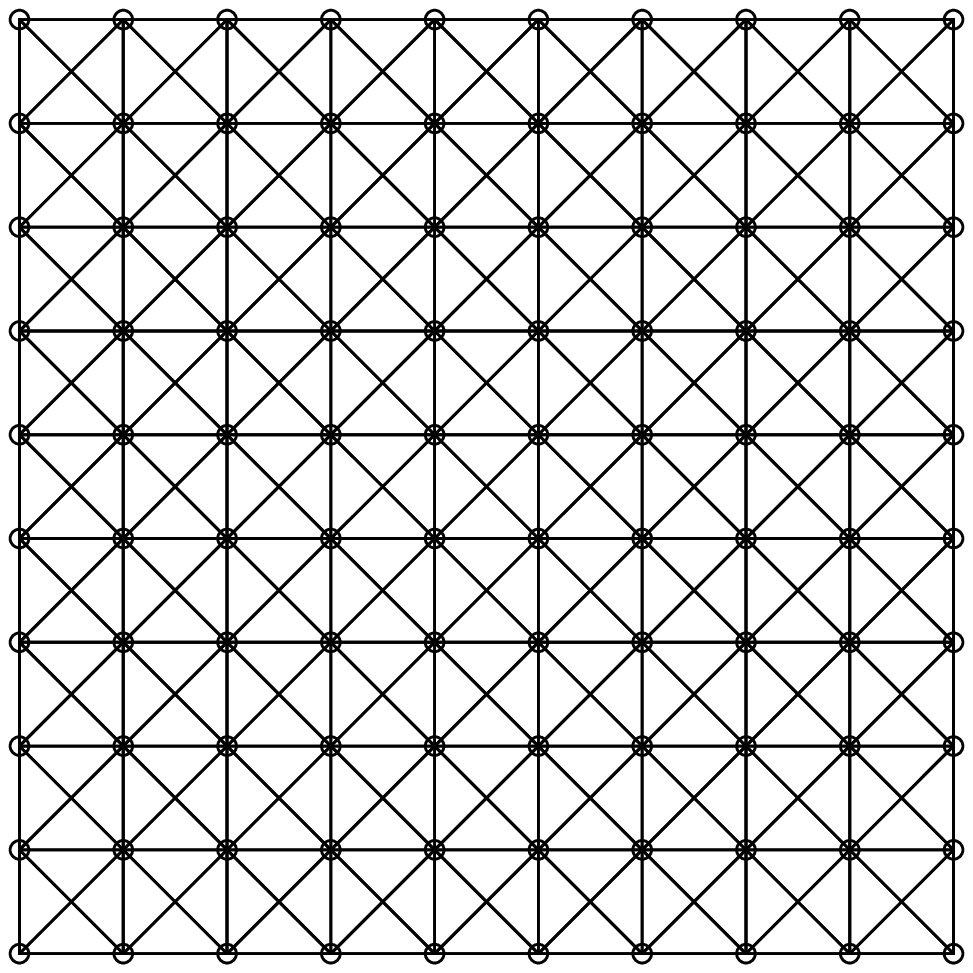}
\includegraphics[width=0.325\textwidth, height=0.25\textheight]{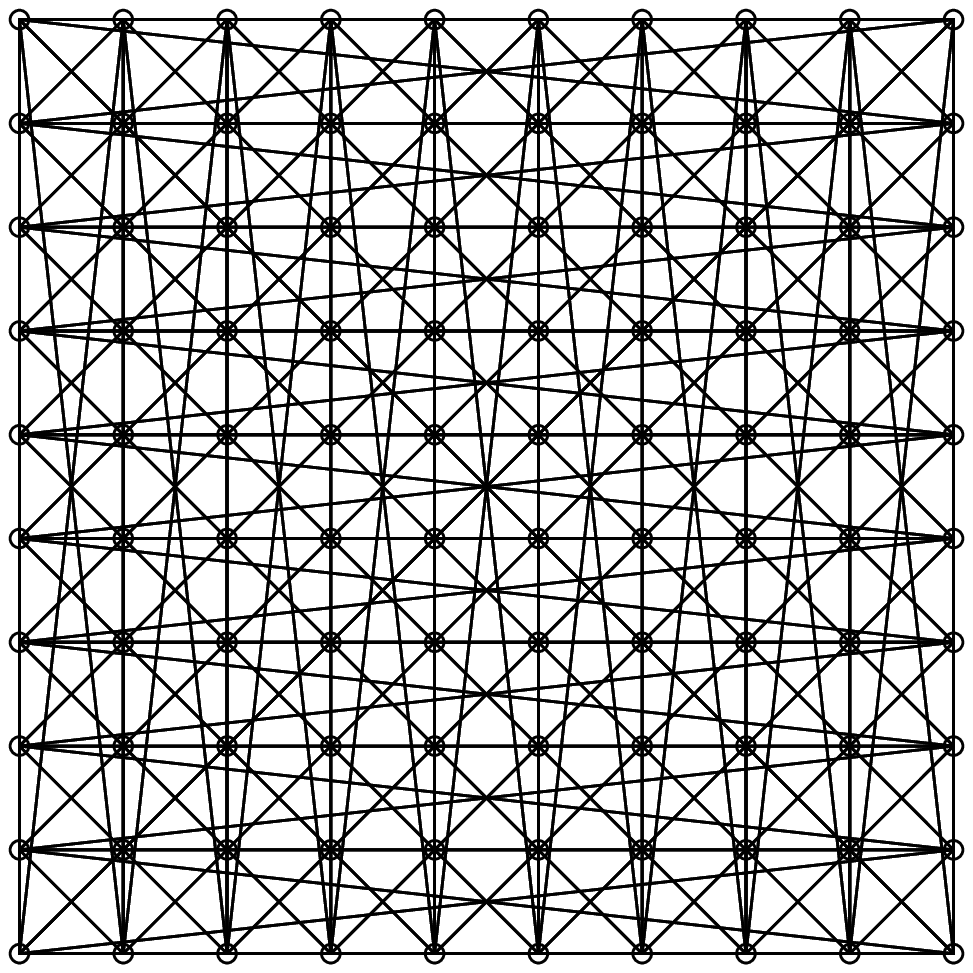}
\end{tabular}
\caption{Different types of neighboring structure for $Y_{nt}$, as implied by different types of $W_n$ matrix, for $n=100$.} 
    \label{Fig: Wn1}
\end{center}
\end{figure}

We complement the motivating example of our research illustrating the low accuracy of the routinely applied first-order asymptotics in the setting of the spatial autoregressive process of order one, henceforth SAR(1):

\begin{equation}
\begin{aligned}
Y\nt &=& \lambda_0 W_n Y\nt + c_{n0} + V\nt,   \quad {\text{for}} \quad t=1,2, 
\end{aligned}
\label{Eq: RR}
\end{equation}

where $V\nt =(v_{1t},v_{2t},..,v\nt)'$ are $n \times 1$ vectors, and $v_{it}\sim \mathcal{N}(0,1)$, i.i.d.\ across $i
$ and $t$. The model is a special case of the general model in (2.1), 
the spatial autoregressive process with spatial autoregressive error (SARAR) of \citet{LY10}. 
Since $c_{n0}$ creates an incidental parameter issue, we 
eliminate it by the standard differentiation procedure. 
Given that we have only two periods, the transformed (differentiated) model is formally equivalent to the cross-sectional SAR(1) model, 
in which $c_{n0} \equiv 0$, a priori;  see \citet{RR15} for a related discussion. 


In the MC exercise, we set $\lambda_0 = 0.2$, and we estimate it through Gaussian  
likelihood maximisation.  The resulting $M$-estimator (the maximum likelihood estimator) 
is consistent and asymptotically normal; see  \S4.1. 
We consider the same types
of $W_n$ as in the motivating example of \S 2.
In Figure \ref{Fig_SAR}, we display the MC results.
Via QQ-plot, we compare the distribution of $\hat\lambda$ to the Gaussian asymptotic distribution (as implied by the first-order asymptotic theory).

\begin{figure}[htb!]
\begin{center}
\begin{tabular}{l}
\hspace{3.7cm} Rook \hspace{3cm} Queen   \hspace{2cm} Queen torus   \\
\begin{turn}
{90} \hspace{1.5cm}  $n = 24$ 
\end{turn}
\hspace{0.3cm} 
\includegraphics[width=0.9\textwidth, height=0.2\textheight]{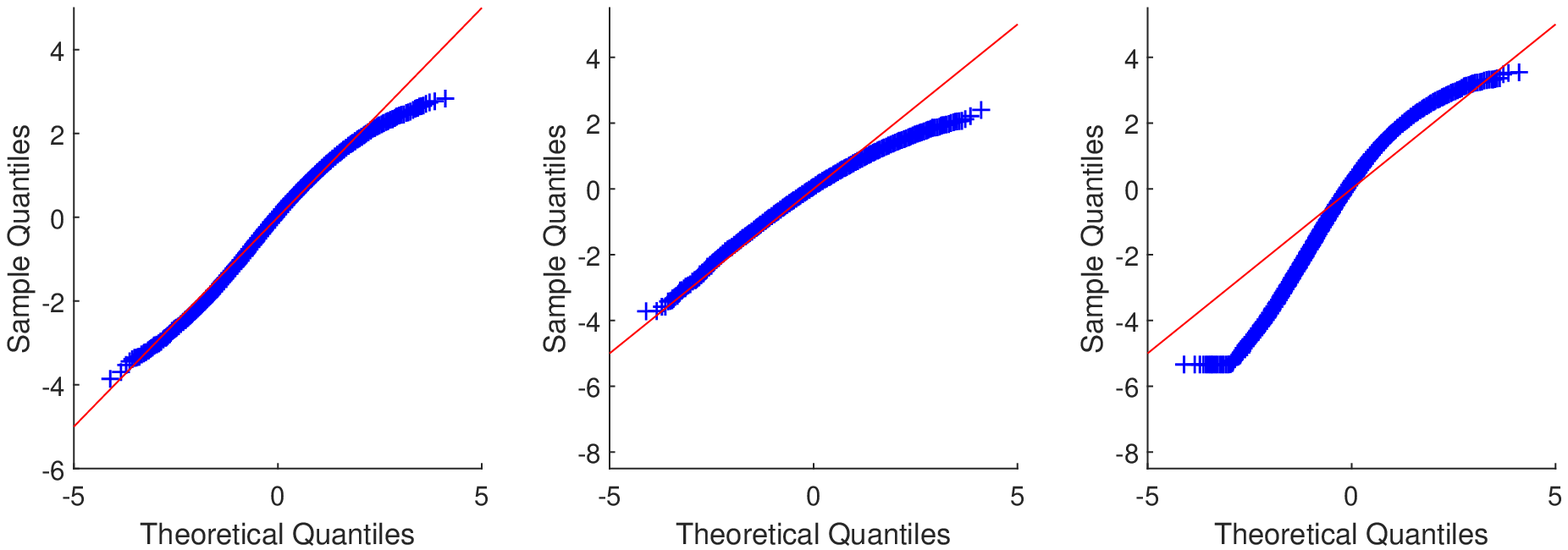} \\
\begin{turn}
{90} \hspace{1.5cm} $n=100$
\end{turn}
\hspace{0.3cm}
\includegraphics[width=0.9\textwidth, height=0.2\textheight]{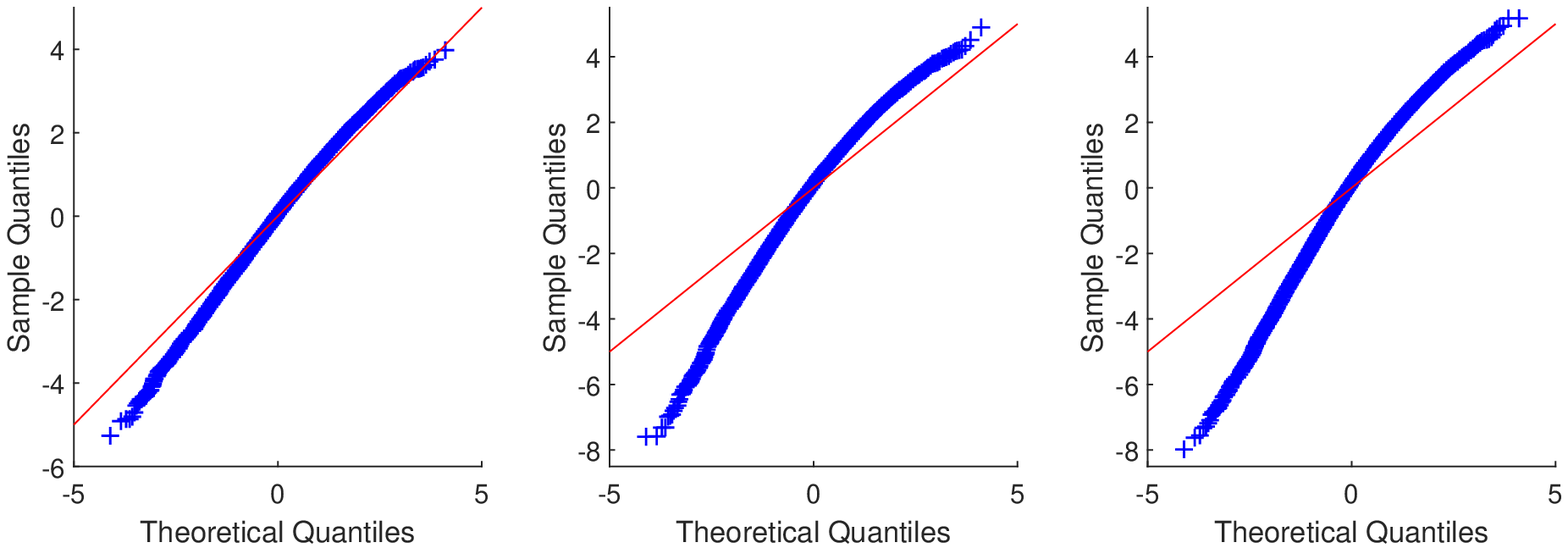} 
\end{tabular}
\caption{SAR(1) model: QQ-plot vs normal of the MLE $\hat\lambda$, for different sample sizes ($n=24$ and $n=100$), $\lambda_0=0.2$ and different types of $W_n$ matrix.}
    \label{Fig_SAR}
\end{center}
\end{figure}

The plots show that, for both sample sizes $n=24$ and $=100$, the Gaussian approximation can be either too thin or too thick in the tails with respect to the ``exact'' distribution (as obtained via simulation). 
The more complex is
the geometry of $W_n$ (e.g., $W_n$ is Queen), the more pronounced are the departures from the Gaussian approximation.

\subsection{Assumption D(iv) and its relation to the spatial weights}  \label{Sec: D_check}
We can obtain $M_{i,T}(\psi,P_{\theta _{0}})$ from (4.4) 
on a SAR(1) model as in (\ref{Eq: RR}), 
$$M_{i,T}(\psi,P_{\theta _{0}})=(T-1)( \tilde{g}_{ii}+g_{ii}),$$ 
where $\tilde{g}_{ii}$ and $g_{ii}$ are the $i_{th}$ element of the diagonals of $G_nG'_n$ and $G^2_n(\lambda_0)$, respectively. Note that $S_n(\lambda_0)=I_n-\lambda_0 W_n$ and $G_n(\lambda_0) = W_nS_n^{-1}(\lambda_0)$.

 To check Assumption D(iv): $\vert\vert M_{i,T}(\psi,P_{\theta _{0}})-M_{j,T}(\psi,P_{\theta _{0}})\vert\vert=O(n^{-1})$, first let us find the expression for the difference between $M_{i,T}(\psi,P_{\theta _{0}})$ and $M_{j,T}(\psi,P_{\theta _{0}})$,
\begin{eqnarray}
M_{i,T}(\psi,P_{\theta _{0}})-M_{j,T}(\psi,P_{\theta _{0}}) &=& (T-1)\l[\l(\tilde{g}_{ii}-\tilde{g}_{jj}\r)+\l(g_{ii}-g_{jj}\r)\r] ,
\end{eqnarray}
where $\tilde{g}_{ii}$ and $\tilde{g}_{ii}$ are $i_{th}$ and $j_{th}$ elements of the diagonal of $G_nG'_n$. $g_{ii}$ snd $g_{jj}$ the $i_{th}$ and $j_{th}$ elements of the diagonal of $G^2_n(\lambda_0)$. Then, we rewrite the expression of $G_n^2(\lambda_0)$ to check its diagonal:
\begin{eqnarray}
G_n^2(\lambda_0) &=& W^2_nS_n^{-2}(\lambda_0) = W_n^2\l(I_n-\lambda_0W_n\r)^{-2} \nn \\
&=&W^2_n\l(I_n+\lambda_0W_n+\lambda_0^2W_n^2+\lambda_0^3W_n^3+\cdots\r)^2 \nn \\
&=& \sum_{k=2}^{\infty}a_kW_n^k,
\end{eqnarray}
where $a_k$ is the coefficient of $W_n^k$.
By comparing the entries of diagonals of $W_n^k$, we can make an assumption on the differences between any two entries of the diagonal of $W_n^k$ that should be at most of order $n^{-1}$, for all integers $k \ge 2 $, such that
$||{g}_{ii}-{g}_{jj}|| = O(n^{-1})$, for any $1\le i$, $j \le n$ and $i\ne j$.

We do the same for $G_nG'_n$:
\begin{eqnarray}
G_nG'_n &=& W_nS_n^{-1}(\lambda_0) \l(W_nS_n^{-1}(\lambda_0) \r)'= W_n\l(I_n-\lambda_0W_n\r)^{-1}\l(W_n\l(I_n-\lambda_0W_n\r)^{-1}\r)' \nn \\
&=&W_n\l(I_n+\lambda_0W_n+\lambda_0^2W_n^2+\lambda_0^3W_n^3+\cdots\r)\l(I_n+\lambda_0W_n+\lambda_0^2W_n^2+\lambda_0^3W_n^3+\cdots\r)'W'_n\nn \\
&=& \sum_{k_1=1}^{\infty}\sum_{k_2=1}^{\infty}a_{k_1k_2} W_n^{k_1}\l(W_n^{k_2}\r)',
\end{eqnarray}
where $a_{k_1k_2}$ is the coefficient of $W_n^{k_1}\l(W_n^{k_2}\r)'$.
Then, we can make a second assumption on the differences between any two entries of the diagonal of $W_n^{k_1}\l(W_n^{k_2}\r)'$ that should be at most of order $n^{-1}$, for all positive integers $k_1$ and $k_2$, such that $||\tilde{g}_{ii}-\tilde{g}_{jj}|| = O(n^{-1})$, for any for any $1\le i$, $j \le n$ and $i\ne j$. From those two assumptions on the behaviour of diagonal entries, we notice that $W_n$ plays a leading role.

To exemplify those two conditions, we consider numerical examples for different choices of $W_n$. The sample size $n$ is $24$ with the benchmark $1/n = 1/24 = 0.041$. When $W_n $ is Rook, the elements of $W_n$ are $\{1/2,1/3,1/4,0\}$. On top of that, we can find the differences between the entries of the diagonals of $W_n^k$ and $W_n^{k_1}\l(W_n^{k_2}\r)'$ are small. For instance, the entries of the diagonal of $W_n^2$ are $\{0.27,0.29,0.31,0.33,0.36\}$. The elements of diagonal of $W_n^3$ are all $0$. For $W_n^2(W_n^2)'$, the entries of its diagonal are \{0.17,0.21,0.22,0.37\}. We can obtain a similar result for Queen weight matrix. For example, the largest difference between diagonal entries of $W_n^2$ is $0.044$. When $W_n $ is Queen with torus, $W_n$ is a symmetric matrix and its elements are $0$ or $1/8$. The entries of the diagonals of $W_n^k$ and $W_n^{k_1}\l(W_n^{k_2}\r)'$ are always identical to each other. Thus, the differences are always $0$. 
Since the entries of $W_n$ are all positive and less than $1$, when the exponent becomes larger,  all the values of the powers of $W_n$ and $W'_n$ will go close to $0.00$. Thus, large powers of $W_n$ definitely satisfy the two assumptions.

\subsection{First-order asymptotics vs saddlepoint approximation}   \label{Subsec: Density}

For the SAR(1) model, we analyse the behaviour of the MLE of $\lambda_0$,
whose PP-plots are available in Figure \ref{Fig: PP}. For each type of
$W_n$, for  $n=100$, the plots show that the saddlepoint approximation is closer to the ``exact'' probability than the first-order asymptotics approximation. 
For $W_n$ Rook, the saddlepoint approximation improves on the routinely-applied first-order asymptotics. In Figure 
\ref{Fig: PP}, the accuracy gains are evident also for $W_n$ Queen and  Queen with torus, where the first-order asymptotic theory displays large errors essentially over the whole support (specially in the tails). On the contrary, 
the saddlepoint approximation is  close to the 45-degree line. 


\begin{figure}[hbt!]
\begin{center}
\begin{tabular}{ccc}
 Rook & Queen  & Queen torus   \\
\begin{turn}
{90} \hspace{2.5cm}  $n = 100$ 
\end{turn}
\includegraphics[width=0.3\textwidth, height=0.265\textheight]{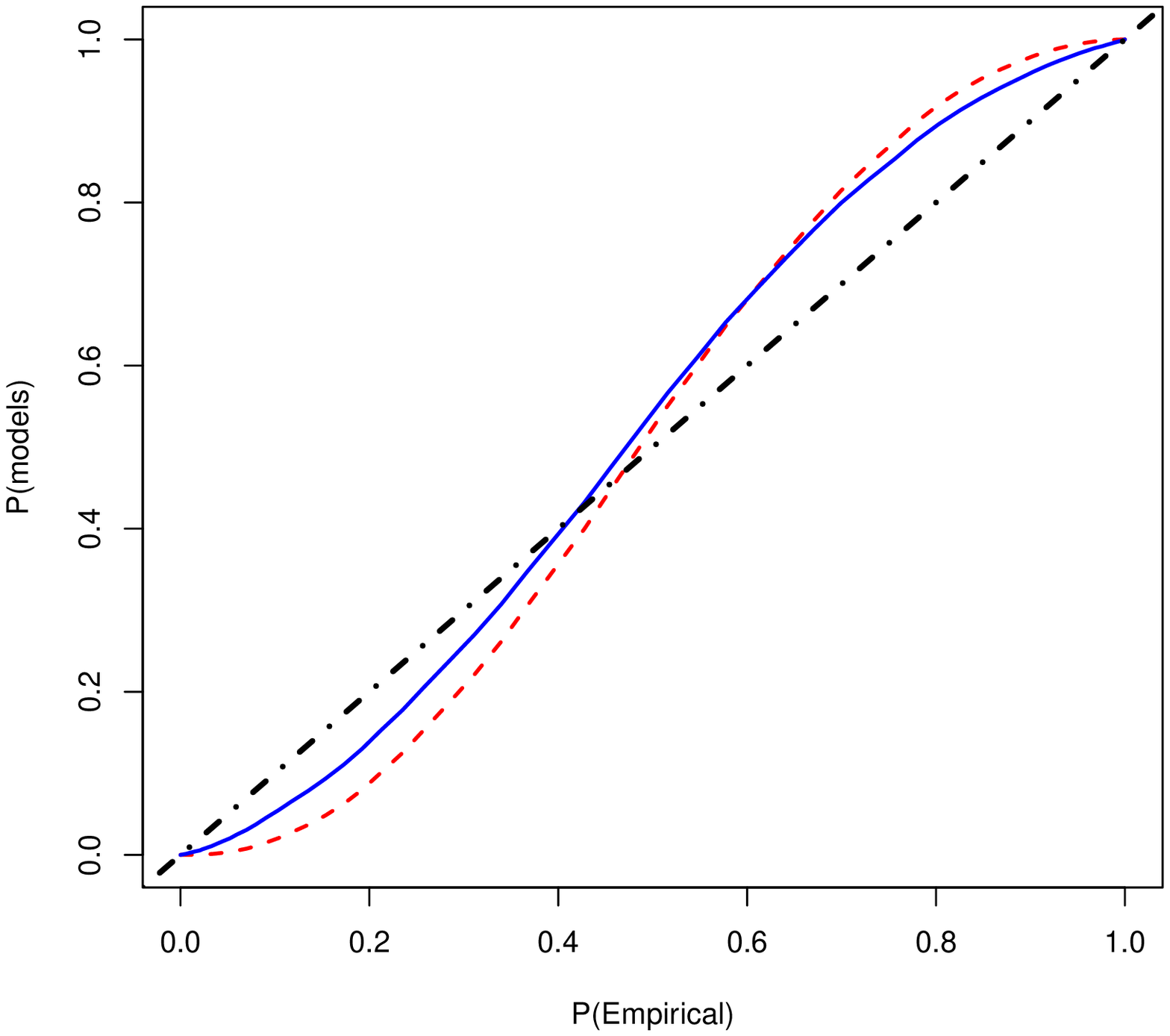} &
\includegraphics[width=0.3\textwidth, height=0.265\textheight]{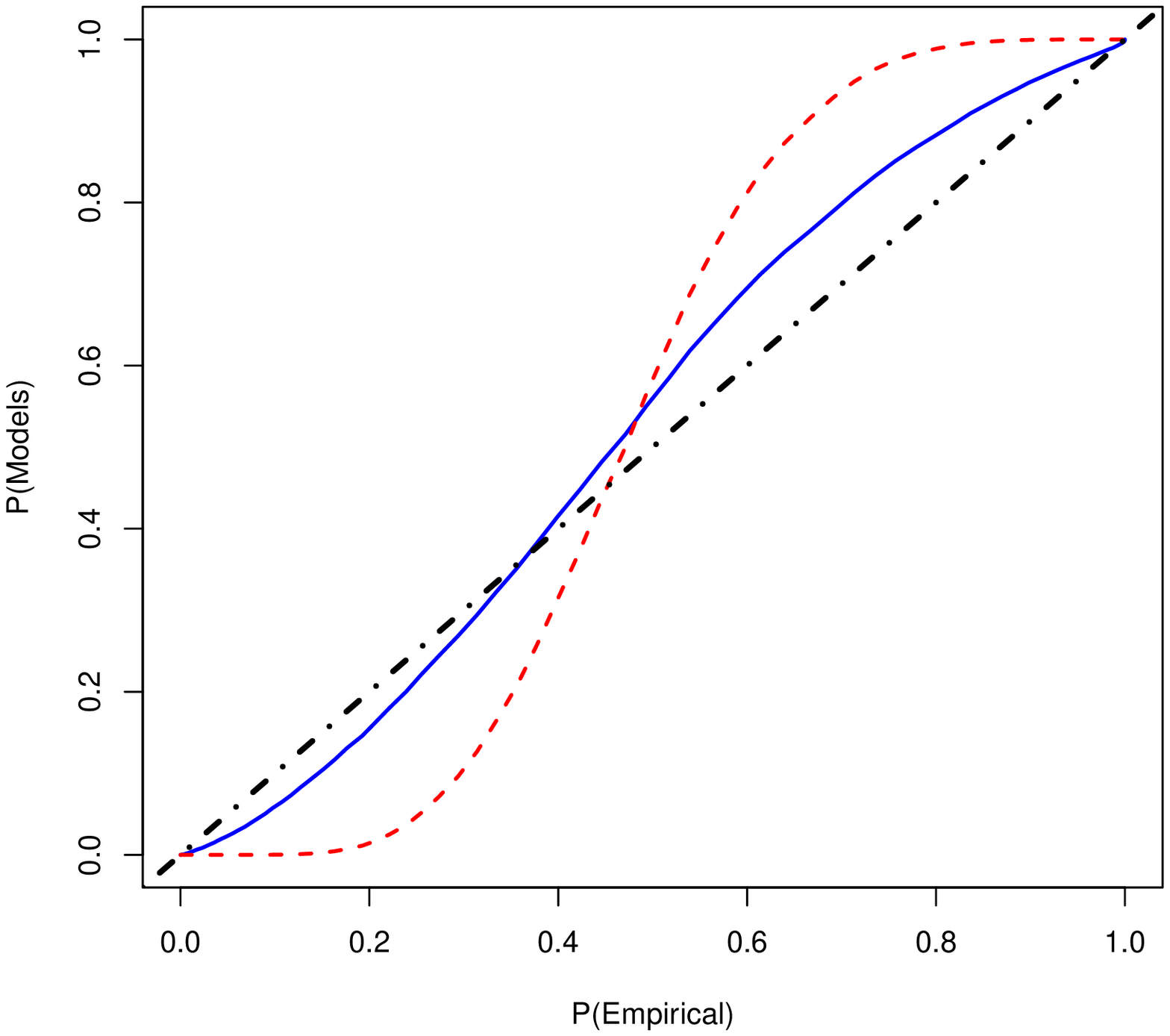} &
\includegraphics[width=0.3\textwidth, height=0.265\textheight]{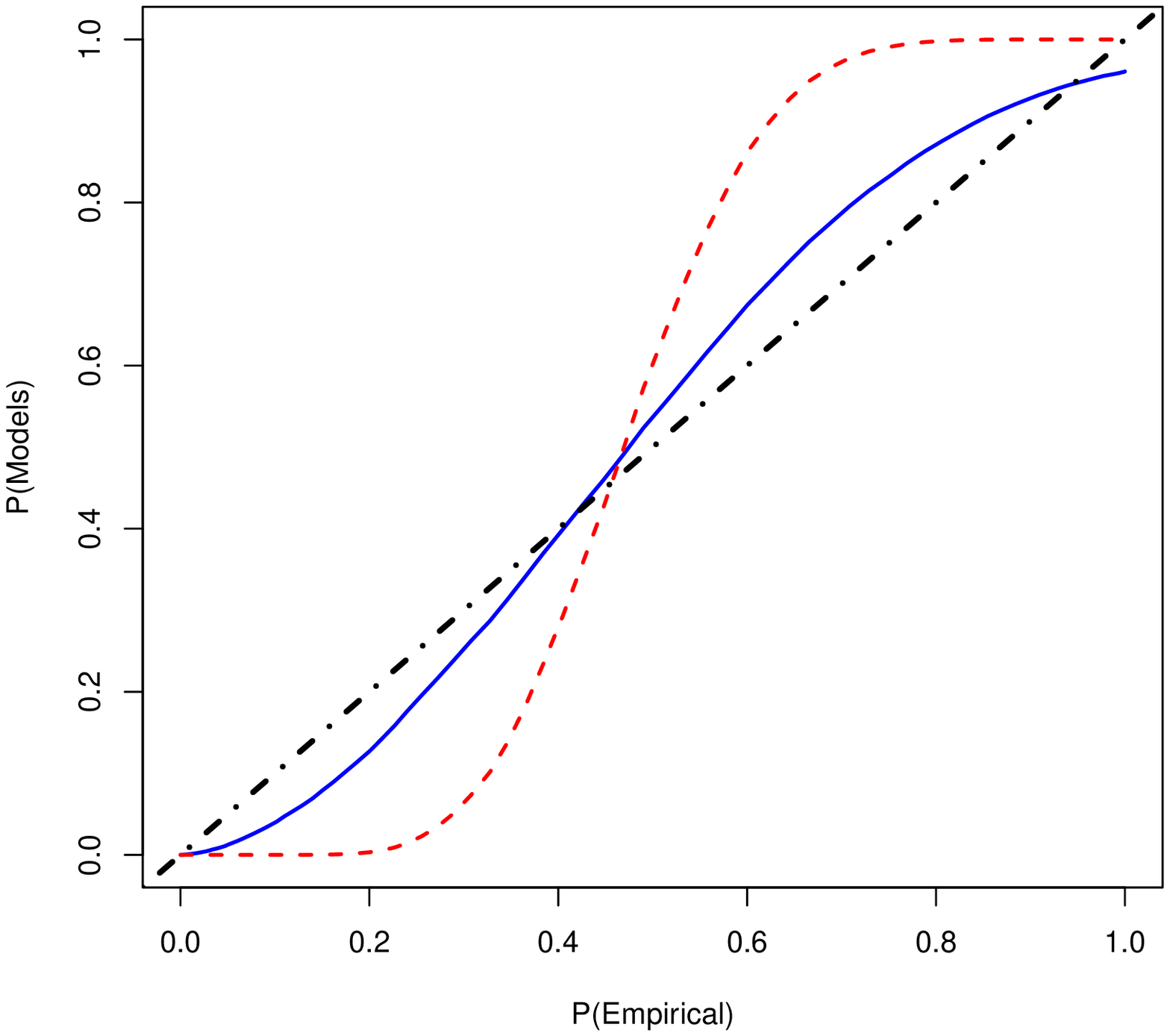}
\end{tabular}
\caption{SAR(1) model: PP-plots for saddlepoint (continuous line) vs asymptotic normal (dotted line) probability approximation, for the MLE $\hat\lambda$, for $n=100$, $\lambda_0=0.2$, and different $W_n$.}
    \label{Fig: PP}
\end{center}
\end{figure}

Density plots show the same information as PP-plots displayed in Figure 3 (of the paper) and Figure \ref{Fig: PP} (in this Appendix), 
but provide a better visualization of the behavior of the considered approximations. Thus, we compute the Gaussian density implied by the asymptotic theory, 
and we compare it to our saddlepoint density approximation. In Figure \ref{Fig: Densities}, we plot the histogram of the ``exact'' estimator density (as 
obtained using 25,000 Monte Carlo runs) to which we superpose both the Gaussian and the saddlepoint density approximation. $W_n$ are
Rook and Queen. The plots illustrate that the saddlepoint technique provides an approximation to the true density which is more accurate
than the one obtained using the first-order asymptotics. 
%
%
\begin{figure}[htbp]
\begin{center}
\begin{tabular}{cc}
Rook  \& $n=24$ & Queen  \&  $n=24$  \\
\includegraphics[width=0.4\textwidth, height=0.25\textheight]{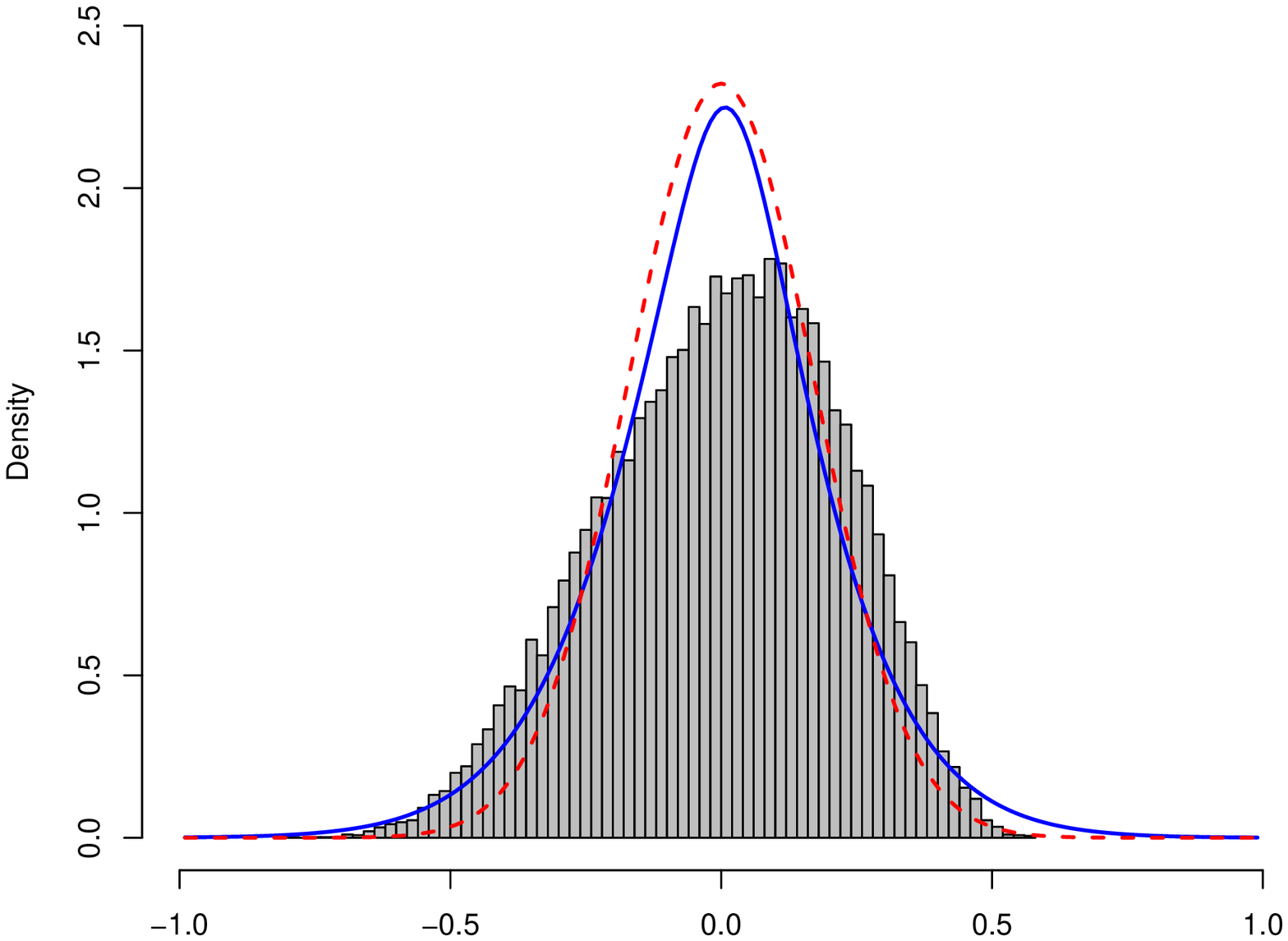} &
 \includegraphics[width=0.4\textwidth, height=0.25\textheight]{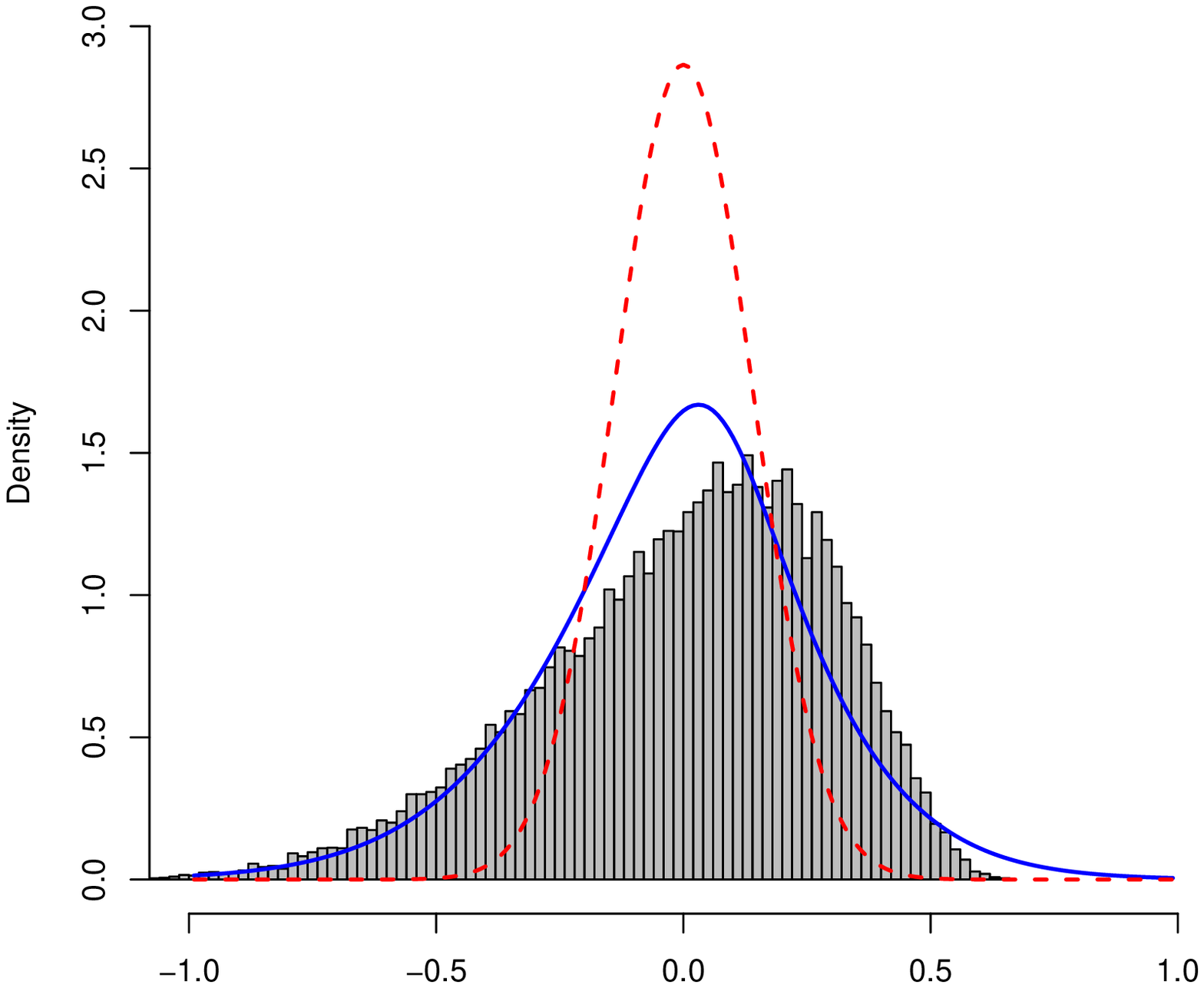} 
\end{tabular}
\caption{SAR(1) model: Density plots for saddlepoint (continuous line) vs asymptotic normal (dotted line) probability approximation to the exact density (as expressed by the histogram and obtained using 
MC with size 25000), for the MLE $\hat\lambda$ and $W_n$ is Rook (left panel) and Queen (right panel). Sample size is $n=24$, while $\lambda_0=0.2$.}
    \label{Fig: Densities}
\end{center}
\end{figure}

\subsection{Saddlepoint approximation vs Edgeworth expansion} 

The Edgeworth expansion derived in Proposition 3 
represents the natural 
alternative to the saddlepoint approximation since it is fully analytic. Thus, we compare
the performance of the two approximations, looking at their relative error for the approximation of the tail area probability. We keep the same Monte Carlo design  as in Section 6.1, 
namely 
$n=24$, and we consider different values of $z$, as in (4.15). 
Figure \ref{Fig: RelErr} displays the absolute value of the relative error, i.e.,
$|\text{approximation}/ \text{exact} -1 |$,  
when $W_n$ is Rook, Queen, and Queen torus. The plots illustrate that the relative error yielded by the saddlepoint approximation is smaller (down to ten times smaller in the case of Rook and Queen Torus) than the relative error entailed by the first-order asymptotic 
approximation (which is always about 100\%). 
The Edgeworth expansion entails a relative error which is typically higher than the one entailed by the saddlepoint approximation---the expansion can even become negative in some parts of the support, with relative error above 100\%. 

\begin{figure}[htbp]
\begin{center}
\begin{tabular}{ccc} 
Rook Left & Queen Left  & Queen Torus Right \\
\includegraphics[width=0.3\textwidth, height=0.25\textheight]{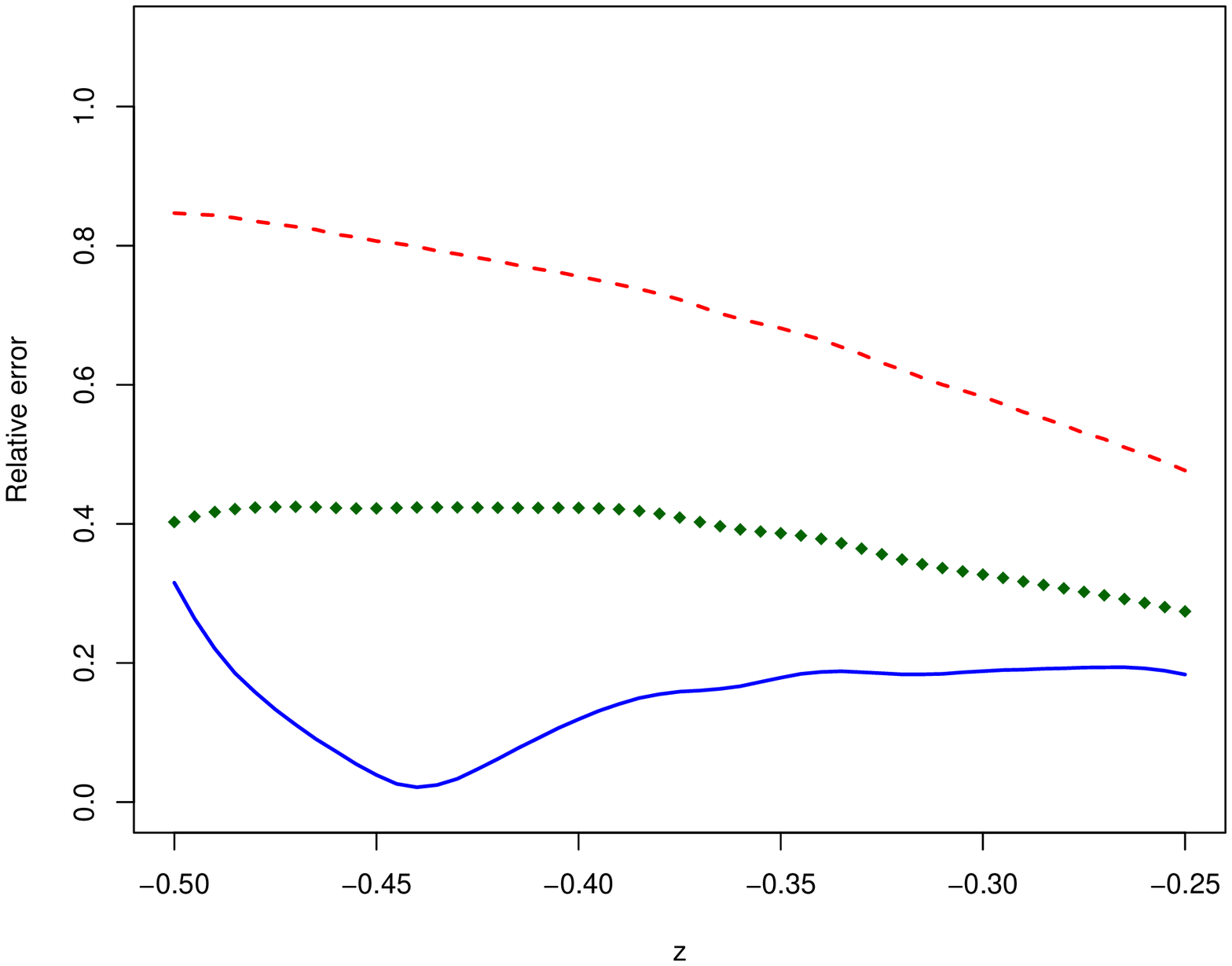}  &
\includegraphics[width=0.3\textwidth, height=0.25\textheight]{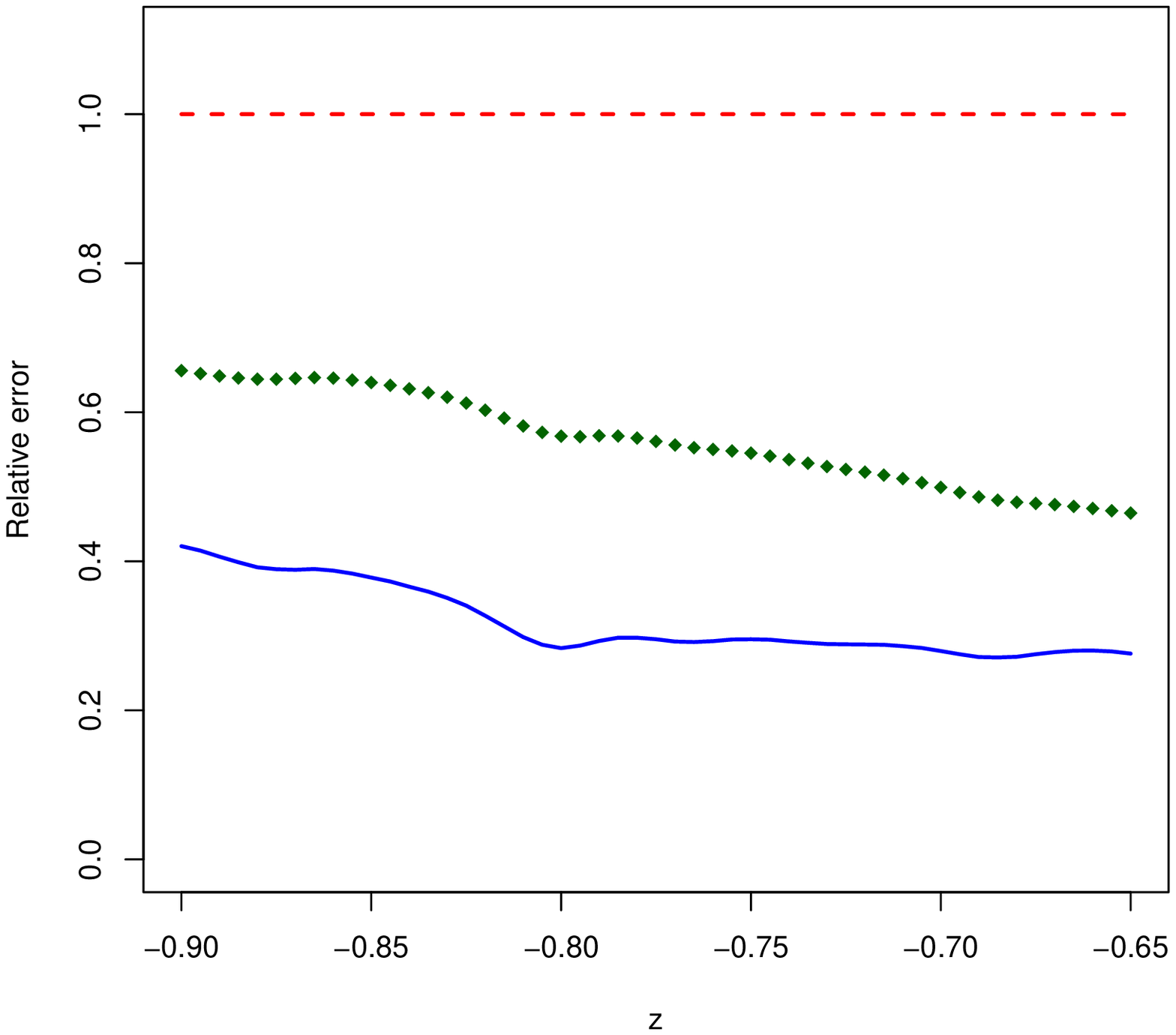}  &
\includegraphics[width=0.3\textwidth, height=0.25\textheight]{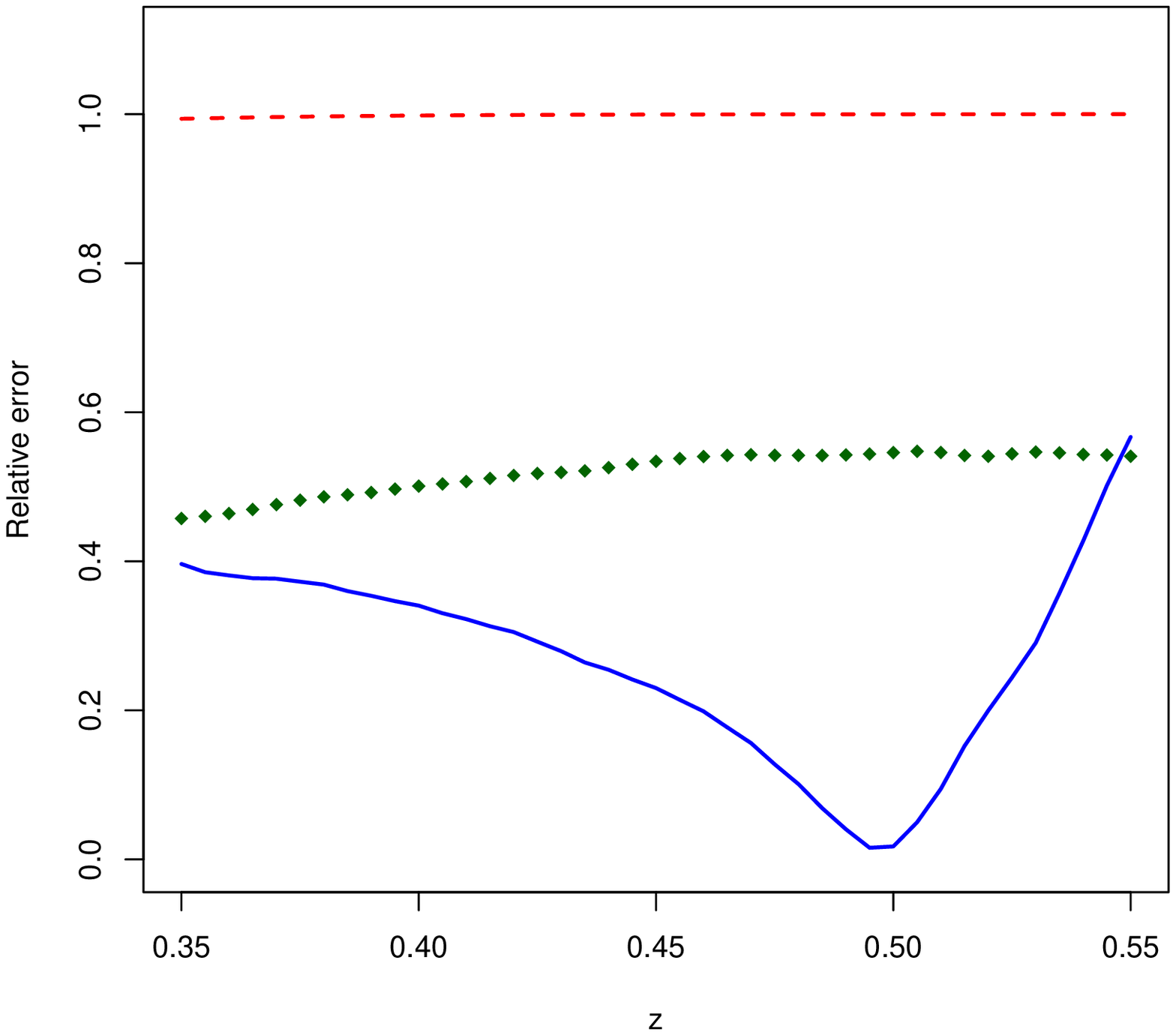}
\end{tabular}
\caption{SAR(1) model: Relative error (in absolute value) for the approximate right and left tail probability, as obtained using the Gaussian asymptotic theory (dotted line), the Edgeworth approximation (dotted line with diamonds) and saddlepoint approximation (continuous line), for the MLE $\hat\lambda$. In each plot, on the x-axes we display different values of $z$.  
The sample size is $n=24$, $\lambda_0=0.2$, and $W_n$ is Rook (left tail), Queen (left tail), and Queen Torus (right tail).}
    \label{Fig: RelErr}

\end{center}
\end{figure}

\subsection{Saddlepoint vs parametric bootstrap}
The parametric bootstrap represents a (computer-based) competitor, commonly applied in statistics and econometrics. 
To compare our saddlepoint approximation to the one obtained by bootstrap, we consider different numbers of bootstrap repetitions, labeled as $B$: we use $B=499$ and $B=999$. For space constraints, in Figure \ref{Fig: FB}, we display the results for $B=499$ (similar plots are available for $B=999$) showing the functional boxplots (as obtained iterating the procedure 100 times) of the bootstrap approximated density, for sample 
size $n =24$ and for $W_n$ is Queen. 
\begin{figure}[hbt!]
\begin{center}
\begin{tabular}{cc} 
\includegraphics[width=0.485\textwidth, height=0.35\textheight]{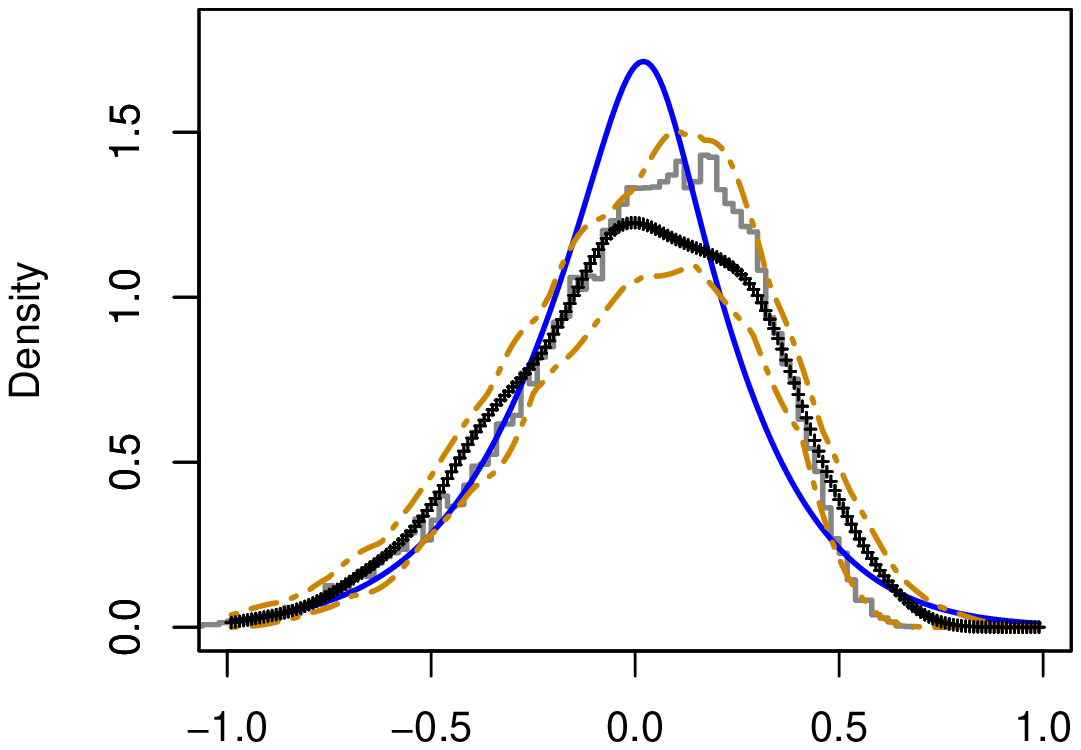} &
\includegraphics[width=0.485\textwidth, height=0.35\textheight]{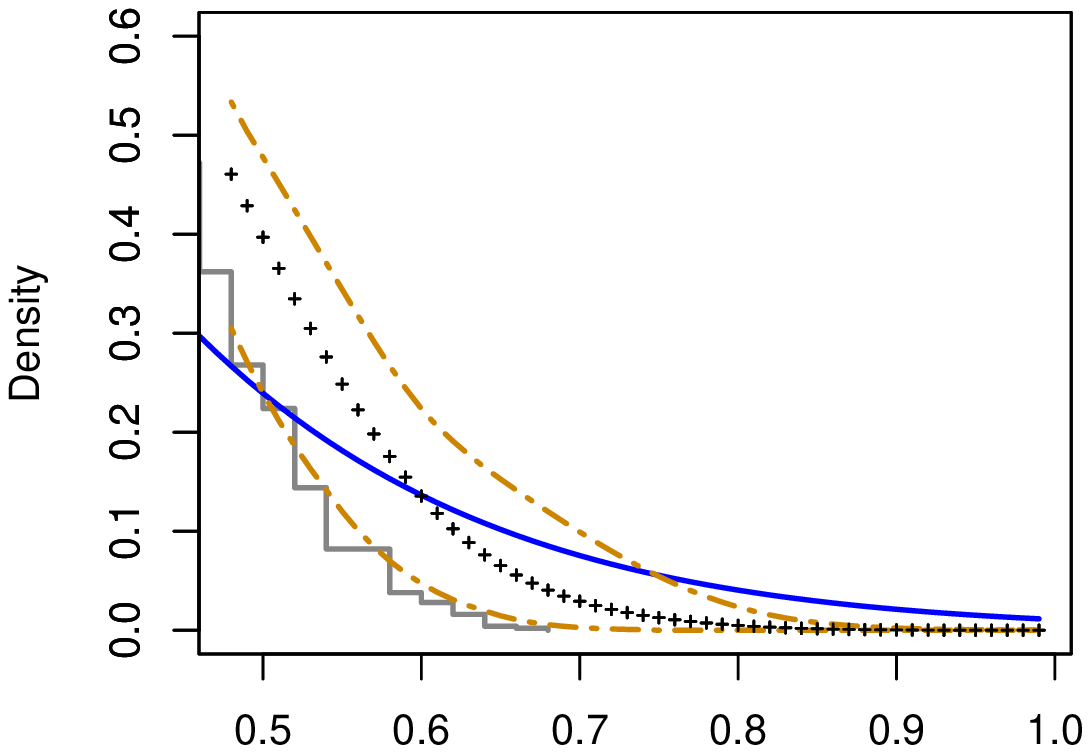}
\end{tabular}
\caption{SAR(1) model. Left panel: Density plots for saddlepoint (continuous line) vs the functional boxplot of the parametric bootstrap probability approximation to the exact density (as expressed by the histogram and obtained using 
MC with size 25000), for the MLE $\hat\lambda$ and $W_n$ is Queen. Sample size is $n=24$, while $\lambda_0=0.2$. Right panel: zoom on the right tail. In each plot, we display the functional central curve (dotted line with crosses), the $1_{st}$ and $3_{rd}$ functional quartile (two-dash lines). 
}
    \label{Fig: FB}

\end{center}
\end{figure}
To visualize the variability entailed by the bootstrap, we display the first and third quartile curves (two-dash lines) and the median functional curve (dotted line with crosses); for details about functional boxplots, we refer to \citet{Sun11} and to R routine \texttt{fbplot}. We notice that, while the bootstrap median functional curve (representing a typical bootstrap density approximation) is close to the actual density (as represented by the histogram), the range between the quartile curves illustrates that the bootstrap approximation has a variability.
Clearly, the variability depends on $B$: the larger is $B$, the smaller is the variability. However, larger values of $B$ entail bigger computational costs: when $B=499$, the bootstrap is almost as fast as the saddlepoint density approximation ({computation time  
about 7 minutes,  on a 2.3 GHz Intel Core i5 processor}), 
but for $B=999$, it is three times slower. {We refer to \citet{PPT15} for comments
on the computational burden of the parametric bootstrap and on the possibility to use
saddlepoint approximations to speed up the computation.} 
For $B=499$ and zooming on the tails, we notice that in the right tail and in the center of the density, the bootstrap yields an approximation (slightly) more accurate than the saddlepoint method, but the saddlepoint approximation is either inside or extremely close to the bootstrap quartile curves (see right panel of Figure \ref{Fig: FB}). In the left tail, the saddlepoint density approximation is closer to the true density than the bootstrap typical functional curve or $\lambda \leq -0.85$. Thus, overall, we cannot conclude that the bootstrap dominates uniformly (in terms of accuracy improvements over the whole domain) the saddlepoint approximation. Even if we are ready to accept a larger computational cost, the accuracy gains yielded by the bootstrap are yet not fully clear:
also for $B=999$, the bootstrap does not dominate uniformly the saddlepoint approximation. Finally, for $B = 49$, the bootstrap is about eight times faster than the saddlepoint approximation, but this gain in speed comes with a large cost in terms of accuracy. As an illustration, for $\hat\lambda-\lambda_0=-0.8 $ (left tail), the true density is $0.074$, the saddlepoint density approximation is $0.061$, while the bootstrap median value is $0.040$, 
with a wide spread between the first and the third quartile, being $0.009$ and 
$0.108$ respectively.

\subsection{Saddlepoint test for composite hypotheses: plug-in approach} \label{Sec: testcomp1}

Our saddlepoint density and/or tail approximations are helpful for testing simple hypotheses about $\theta_0$; see \S 6.1 of the paper. 
Another interesting case suggested by the Associate Editor and an anonymous referee that has a strong practical relevance is related to testing a
composite null hypothesis. It is a problem which is different from the one considered so far in the paper, because it raises the issue of dealing with nuisance parameters. 

To tackle this problem, several possibilities are available. 
For instance, we may 
fix the nuisance  parameters at the MLE estimates. Alternatively, we may consider to use
the (re-centered) profile estimators, as suggested, e.g., in \citet{HM18} and \citet{HM20}.
Combined with the saddlepoint density in (4.13), these techniques  yield a ready solution to the nuisance parameter problem. In our numerical experience, 
these solutions may preserve  reasonable accuracy in some cases.
 
To illustrate this aspect, we consider a numerical exercise about SAR(1) model, in which we set the nuisance
parameter equal to the MLE estimate. Specifically, we consider a SAR(1) with parameter $\theta=(\lambda,\sigma^2)'$ and our goal is to test
$\mathcal{H}_0: \lambda=\lambda_0 = 0$ versus $\mathcal{H}_1: \lambda > 0$, while leaving $\sigma^2$ unspecified. To study the performance of the saddlepoint density approximation for the sampling distribution of $\hat\lambda$, we perform a MC study, in which we set $n=24$, $T=2$, and $\sigma_0^2=1$. For each simulated (panel data) sample, we estimate the model parameter via MLE and get $\hat\theta = (\hat\lambda,\hat\sigma^2)'$. Then, we compute the  saddlepoint density in (4.13) using $\hat\sigma^2$ in lieu of $\sigma^2_0$: for each sample, we have a saddlepoint density approximation. We repeat this procedure 100 times, for $W_n$ Rook and Queen, so we obtain 100 saddlepoint density approximations for each $W_n$ type. In Figure \ref{Fig: Test_FB}, we display functional boxplots  of the resulting saddlepoint density approximation for the MLE $\hat\lambda$. To have a graphical idea of the saddlepoint approximation accuracy, on the same plot we superpose the histogram of $\hat\lambda$, which represents the sampling distribution of the estimated parameter. Even a quick inspection of the right and left plots suggests that the resulting saddlepoint density approximation preserves (typically) a good performance. Indeed, 
we find that the median functional curve (dotted line with crosses, which we consider as the typical saddlepoint density approximation) is close to  the histogram and it gives a good performance in the tails, for both $W_n$ Rook and Queen. The range between the first and third quartile curves (two-dash lines) illustrates the variability of the saddlepoint approximation. When $W_n$ is Queen, even though there is a departure from the median curve from the histogram over the x-axis intervals $(-0.75,0)$ and $(0.25,0.5)$, the histogram is inside the functional confidence interval expressed by the first and third curves. Thus, we conclude that computing the $p$-value for testing $\mathcal{H}_0$ using $\hat\sigma^2$ in the expression of the saddlepoint density approximation seems to preserve accuracy in this SAR(1) model, even for such a small sample.

\begin{figure}[htb!]
\begin{center}
\begin{tabular}{cc}
Rook  \& $n=24$ & Queen  \&  $n=24$  \\
\includegraphics[width=0.475\textwidth, height=0.35\textheight]{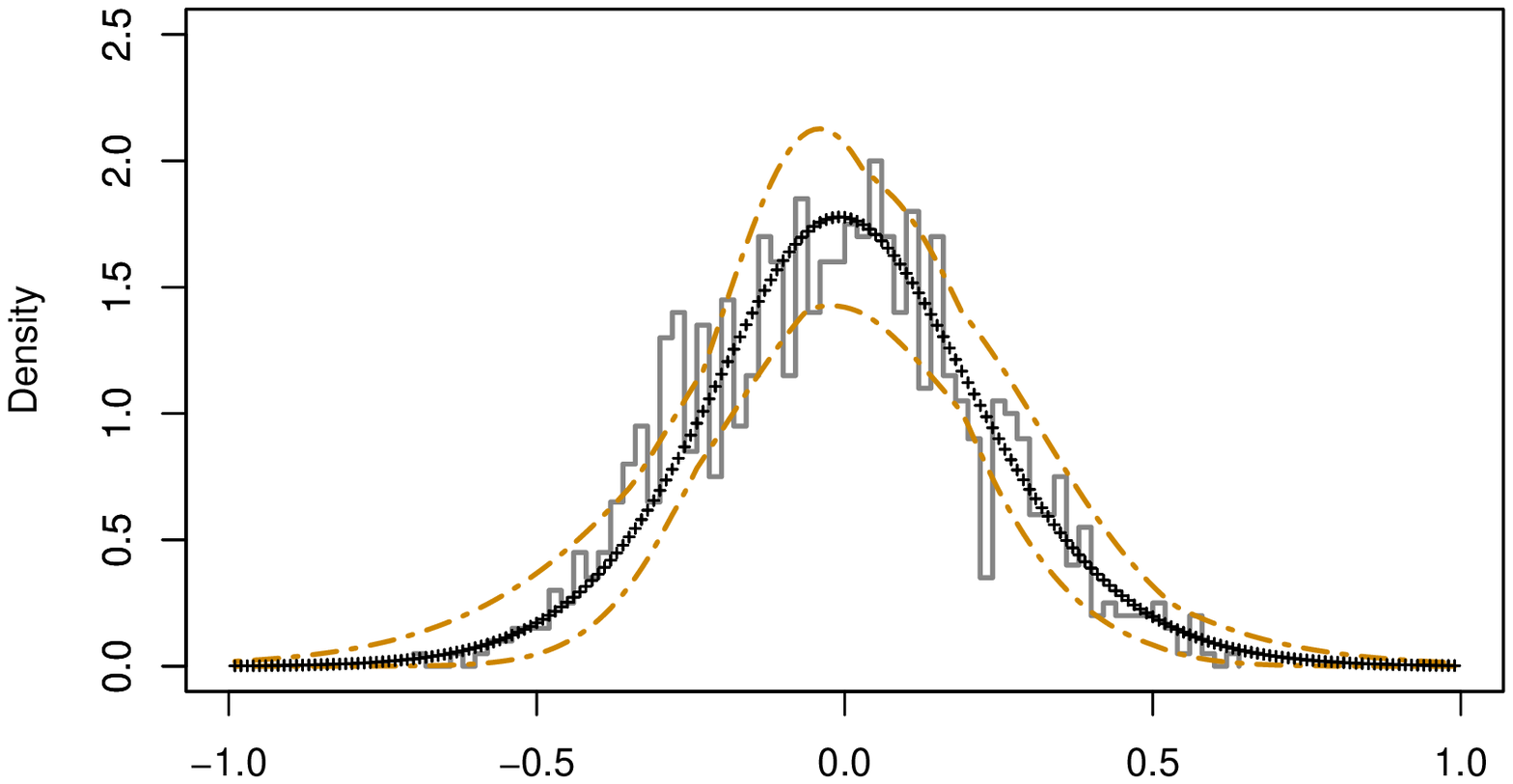} &
 \includegraphics[width=0.475\textwidth, height=0.35\textheight]{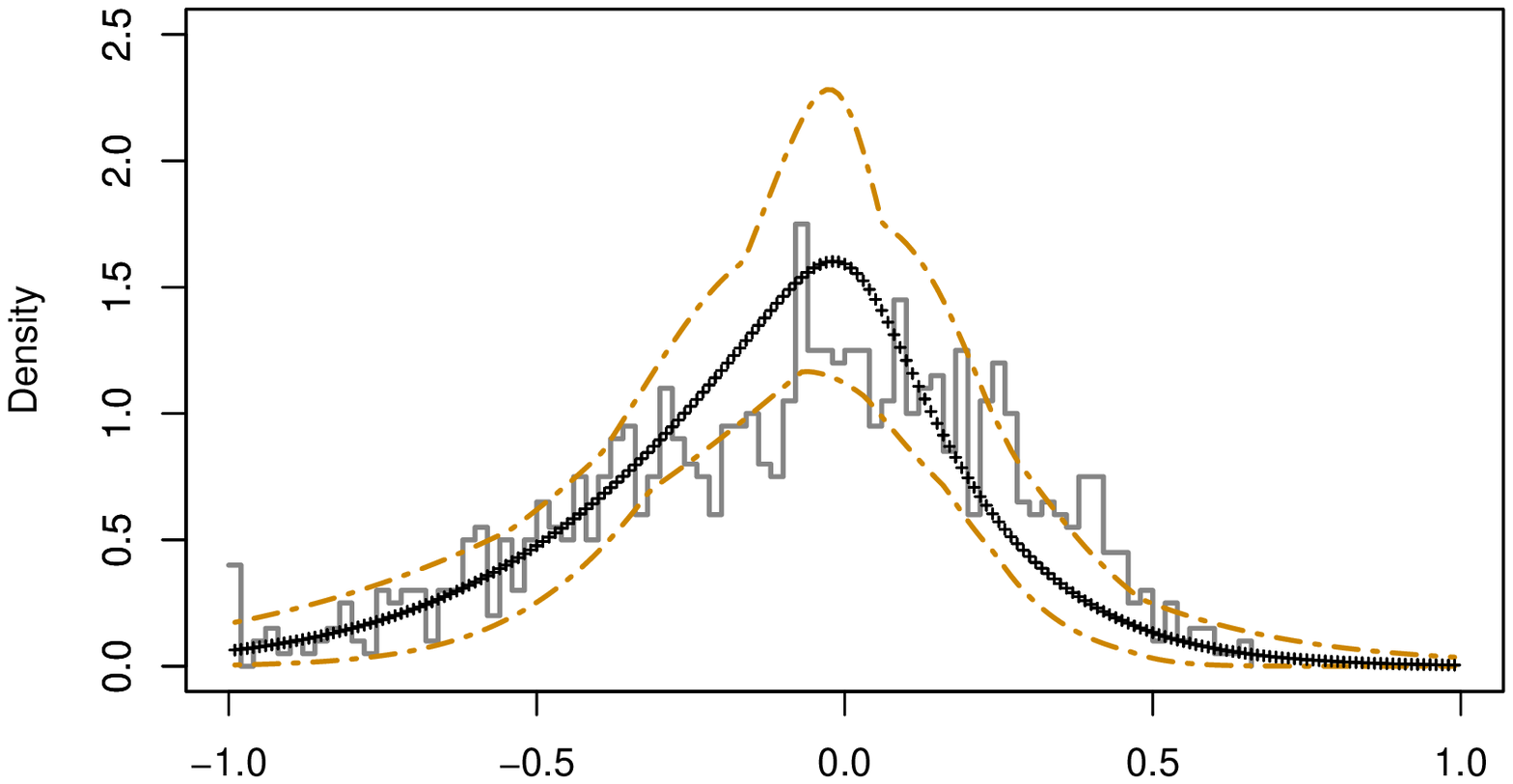} 

\end{tabular}
\caption{SAR(1) model: Functional boxplots of saddlepoint density approximation to the exact density (as expressed by the histogram), for the MLE $\hat\lambda$ and $W_n$ is Rook (left panel) and Queen (right panel). Sample size is $n=24$, while $\lambda_0=0$.}
    \label{Fig: Test_FB}

\end{center}
\end{figure}

\newpage
\small{
\bibliographystyle{myapalike1}
\bibliography{biblio_Panel}}

\end{document}